\documentclass[11pt]{article}
\usepackage{amsmath, amssymb, theorem, latexsym, epsfig, faktor, xcolor}
\usepackage{eufrak}
\numberwithin{equation}{section}
                 
\theoremstyle{plain}
\theorembodyfont{\itshape}
\newtheorem{theorem}{Theorem}[section]
\newtheorem{proposition}[theorem]{Proposition}
\newtheorem{lemma}[theorem]{Lemma}
\newtheorem{corollary}[theorem]{Corollary}

\theorembodyfont{\rmfamily}
\newtheorem{definition}[theorem]{Definition}

\newtheorem{remark}[theorem]{Remark}
\newenvironment{proof}{{\noindent \textbf{Proof}\,\,}}{\hspace*{\fill}$\Box$\medskip}

\graphicspath{{figures/}}

\def\qq{\mathbb Q}

\def\wt#1{\widetilde#1}
\def\wh#1{\widehat#1}
\def\rr{\mathbb R}
\def\var{\varepsilon}
\def\tt{\mathbb T}
\def\var{\varepsilon}

\def\mca{\mathcal A}
\def\mcb{\mathcal B}

\def\mo{\operatorname{mod}}

\def\nn{\mathbb N}

\def\zz{\mathbb Z}

\def\mod{\operatorname{mod}}
\def\Id{\operatorname{Id}}
\def\diam{\operatorname{diam}}
\def\const{\operatorname{const}}
\def\R{\mathbb R}
\def\T{\mathbb T}
\def\Q{\mathbb Q}
\def\Z{\mathbb Z}
\def\N{\mathbb N}
\def\spann{\operatorname{span}}
\def\SO{\operatorname{SO}}
\def\SU{\operatorname{SU}}
\def\SL{\operatorname{SL}}

\def\a{\alpha}
\def\lam{\lambda}
\def\d{\delta}
\def\D{\Delta}
\def\f{\varphi}
\def\om{\omega}
\def\tb{\bar{t}}
\def\that{\widehat{t}}
\def\tig{\widetilde{g}}
\def\cut{\operatorname{cut}}
\def\tcut{t_{\cut}}
\def\ip{\langle \cdot, \cdot \rangle}
\def\bg{\bar{g}}
\def\ba{\bar{a}}
\def\bb{\bar{b}}
\def\bc{\bar{c}}
\def\tg{\tilde{g}}
\def\ta{\tilde{a}}
\def\tib{\tilde{b}}
\def\tc{\tilde{c}}
\def\tf{\tilde{f}}
\def\tit{\tilde{t}}
\def\tK{\tilde{K}}
\def\hK{\hat{K}}
\def\ud{\underline{d}}
\def\od{\overline{d}}
\def\la{\lambda}

\def\ts{t_*}
\def\tsd{t_*^2}
\def\tsf{t_*^4}
\def\ps{p_*}

\def\vH{\vec H}

\def\sR{sub-Rie\-man\-ni\-an }

\newcommand{\restr}[2]{\left. #1 \right|_{#2}}
\newcommand{\be}[1]{\begin{equation}\label{#1}}
\newcommand{\ee}{\end{equation}}
\newcommand{\eq}[1]{$(\protect\ref{#1})$}
\newcommand{\map}[3]{#1 \, : \, #2 \to #3}
\newcommand{\mapto}[3]{#1 \, : \, #2 \mapsto #3}
\newcommand{\pder}[2]{\frac{\partial \, #1}{\partial \, #2} }

\newcommand{\onefiglabelsizen}[4]
{
\begin{figure}[htbp]
\begin{center}
\includegraphics[height=#4cm]{#1}
\\
\parbox[t]{0.7\textwidth}{\caption{#2}\label{#3}}
\end{center}
\end{figure}
}

\newcommand{\twofiglabelsize}[8]
{
\begin{figure}[htbp]
\includegraphics[height=#4\textwidth]{#1}
\hfill
\includegraphics[height=#8\textwidth]{#5}
\\
\hfill
\parbox[t]{0.45\textwidth}{\caption{#2}\label{#3}}
\hfill
\parbox[t]{0.45\textwidth}{\caption{#6}\label{#7}}
\hfill
\end{figure}
}

\title{Sub-Riemannian geodesics \\on the Heisenberg 3D nil-manifold}
\author{A. Glutsyuk\thanks{UMPA, ENS de Lyon (UMR 5669 du CNRS), France. E-mail: 
aglutsyu@ens-lyon.fr} \thanks{HSE University, Moscow, Russia} \thanks{Higher School of Modern Mathematics MIPT, 1 Klimentovskiy per., Moscow, Russia}, Yu. Sachkov\thanks{Program Systems Institute (Russian Academy of Sciences, Pereslavl-Zalessky, Russia), RUDN University (Moscow, Russia)} \thanks{Corresponding author, yusachkov@gmail.com}}
\begin{document}
\maketitle
\newcommand{\bigslant}[2]{{\raisebox{-.2em}{$#1$}\left\backslash\raisebox{.2em}{$#2$}\right.}}

\begin{abstract}
We study the projection of the left-invariant \sR structure on the 3D Heisenberg group $G$ to the Heisenberg 3D nil-manifold~$M$ --- the compact homogeneous space of $G$ by the discrete Heisenberg group. 

First we describe dynamical properties of the geodesic flow for $M$: periodic and dense orbits, a dynamical characterization of the normal Hamiltonian flow of Pontryagin maximum principle and its integrability properties. 
We show that it is Liouville integrable on a nonzero level hypersurface $\Sigma$ of the Hamiltonian  outside appropriate smaller  proper hypersurface in $\Sigma$ and has no nontrivial analytic integrals 
on all of $\Sigma$. Then we obtain sharp twoside bounds of \sR balls and distance in~$G$, and on this basis we estimate the cut time for \sR geodesics in $M$. 
\end{abstract}

\medskip\noindent
Keywords:
Sub-Riemannian geometry, Heisenberg group, Heisenberg 3D nil-manifold, geodesics, dynamics,  Hamiltonian flow, sub-Riemannian balls and distance, optimality

\medskip\noindent
MSC: 53C17, 37C10, 49K15

\tableofcontents

\section{Introduction}\label{sec:intro}
The left-invariant \sR structure on the 3D Heisenberg group $G$ is a paradigmatic model of \sR geometry \cite{montgomery_book, ABB_book}. In this paper we study the projection of this \sR structure to a compact homogeneous space of the group $G$ --- to the Heisenberg 3D nil-manifold $M$. The \sR structure on $M$ is locally isometric to the structure on~$G$, thus these structures have local objects (geodesics and conjugate points) related by the projection. 
Although, the global issues as dynamical properties of geodesics and cut time are naturally different. 
We aim to study these global questions in some detail.

The structure of this work is as follows. 
Section \ref{sec:ex} discusses a well known projection of Euclidean structure from $\R^n$ to the torus $\T^n$, which suggests a motivation of the subsequent study.
In Sec. \ref{sec:Heis}  we recall the construction of the Heisenberg 3D nil-manifold $M$. In Sec. \ref{sec:SR} we present basic definitions of \sR geometry, define the \sR structures of $G$ and~$M$, and describe their geodesics; in particular, we recall the parametrization of two distinct classes of \sR geodesics in $G$ --- lines and spirals. In Sec. \ref{sec:lines} we show that \sR geodesics-lines in $M$ may be either closed or dense, and describe explicitly geodesics falling into these classes.
In Sec. \ref{sec:spirals} we describe dynamical properties of geodesics-spirals in $M$: we show that such a geodesic is either closed or dense in a certain 2D torus, and distinguish geodesics of these classes. In Sec.   \ref{sec:dyn} we describe dynamical properties of the restriction of the Hamiltonian vector field for geodesics to a compact invariant surface (common level surface of the Hamiltonian and the Casimir). We show that the flow of this restriction is conjugated to a $p$-standard flow. Then we show that the restriction of the flow to 
a nonzero level hypersurface $\Sigma$ of the Hamiltonian is Liouville integrable outside appropriate hypersurface $S_0$ and has no nontrivial analytic integrals 
on the whole hypersurface $\Sigma$. Next in Sec. \ref{sec:ellips} we obtain sharp interior and exterior ellipsoidal bounds of \sR balls in $G$, which improve the classical ball-box bounds. In Sec. \ref{sec:lower} we estimate the cut time on geodesics in $M$ and the diameter of $M$ on the basis of above interior bounds of \sR balls in $G$. Finally, in Sec. \ref{sec:upper} we prove two-sided bounds of cut time on geodesics in $M$   on the basis of above exterior bounds of \sR balls in $G$.

{Conserning related research on sub-Riemannian optimal control problems on compact homogeneous spaces, we are aware only on the works where left-invariant sub-Riemannian structures on $\SO(3)$ and $\SU(2)$ were studied~\cite{VG, boscain_rossi, ber_zub01, ber_zub15, ber_zub16, 
chang_markina_vasiliev}. Moreover, sub-Riemannian structures on the lens spaces were studied in the paper  \cite{boscain_rossi}.
As far as we know, our paper is the first one in the literature where dynamical properties of sub-Riemannian geodesic flow on a compact homogeneous spase are investigated. 
}

\section{Motivating example}\label{sec:ex}

Geodesics in the Euclidean space $\R^n$ have trivial dynamics (they tend to infinity) and optimality properties (they are length minimizers).  

The situation changes when we pass from $\R^n$  to its compact homogeneous space --- the torus $\T^n = \R^n /\Z^n$. Consider the Riemannian structure on $\T^n$ obtained via the projection $\map{\pi}{\R^n}{\T^n}$. Then the geodesics on $\T^n$ are orbits of the linear flows 
\begin{align}
&t \mapsto (x_1^0 + \om_1 t, \dots, x_n^0 + \om_n t)(\mod 1), \label{geodRn}\\
&\om_1^2 + \dots +\om_n^2 \neq 0, \qquad (x_1^0, \dots, x_n^0) \in \T^n, \qquad t \in \R. \nonumber
\end{align}

Kronecker's theorem \cite{kathas} (Propos. 1.5.1) states that such a geodesic is dense in $\T^n$ if and only if the frequencies $\om_1, \dots, \om_n$ are linearly independent over $\Q$. In all other cases a geodesic is dense in a nontrivial $k$-dimensional torus in $\T^n$, $1 \leq k < n$, see Propos. \ref{prop:linflow} below; in particular, a geodesic is periodic if $k = 1$. See Figs. \ref{fig:geod11}, \ref{fig:geod12} for $n = 2$ and Figs.~\ref{fig:geod13}--\ref{fig:geod15} for $n = 3$.

The following well-known statement generalizes Kronecker's theorem.

\begin{proposition} (see \cite[section 5.1.5]{kh2}). 
\label{prop:linflow}
Consider a geodesic $\Gamma$ in $\T^n$ of the form \eq{geodRn} and the corresponding vector space
$$
R = \spann_{\Q} \left\{ r = (r_1, \dots, r_n) \in \Q^n \mid \sum_{i=1}^n r_i \om_i = 0\right\}, \qquad \rho = \dim R.
$$
Then $\Gamma$ is dense in a smooth manifold $S \subset \T^n$ diffeomorphic to a torus~$\T^{n- \rho}$. 
\end{proposition}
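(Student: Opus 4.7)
The plan is to realize the closure $\overline{\Gamma}$ as a coset of a closed connected subgroup $H$ of $\T^n$, identify $H$ with a subtorus of dimension $n-\rho$ via Pontryagin duality, and conclude that $\Gamma$ is dense in that subtorus $S$.

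First I would consider the one-parameter subgroup $\f \colon \R \to \T^n$ defined by $\f(t) = t\om \mod \Z^n$, where $\om = (\om_1, \dots, \om_n)$, and set $H := \overline{\f(\R)}$. Then $H$ is a closed subgroup of $\T^n$, and since it is the closure of the continuous image of the connected set $\R$, it is connected. By the structure theorem for closed connected subgroups of compact abelian Lie groups, $H$ is itself a torus, so $H \cong \T^k$ for some $0 \leq k \leq n$.

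Next I would compute $k$ using the character group $\widehat{\T^n} = \Z^n$, where $m \in \Z^n$ corresponds to $\chi_m(x) = \exp(2\pi i \langle m, x \rangle)$. The character $\chi_m$ vanishes on $\f(\R)$ (hence on $H$) if and only if $\sum_{i=1}^n m_i \om_i = 0$, i.e. if and only if $m \in R \cap \Z^n$. Since $R$ is a $\Q$-subspace of $\Q^n$ of dimension $\rho$, the intersection $R \cap \Z^n$ is a free abelian subgroup of $\Z^n$ of rank $\rho$. Choosing a $\Z$-basis of $R \cap \Z^n$ and completing it to a basis of $\Z^n$ gives coordinates on $\T^n$ in which the joint kernel of these characters is visibly a subtorus of dimension $n - \rho$, so $k = n - \rho$. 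Finally, for any initial point $x^0 \in \T^n$, the orbit $\Gamma$ equals $x^0 + \f(\R)$, which is a translate of a dense subgroup of $H$; hence $\overline{\Gamma} = x^0 + H =: S$, a smooth submanifold of $\T^n$ diffeomorphic to $\T^{n-\rho}$, in which $\Gamma$ is dense by construction.

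The main obstacle is essentially bookkeeping rather than a substantive difficulty: one must verify that the lattice $R \cap \Z^n$ has $\Z$-rank equal to the $\Q$-dimension $\rho$ of $R$, and that the joint kernel of a rank-$\rho$ sublattice of characters cuts out a subtorus of complementary dimension. Both facts reduce to the Smith normal form of an integer matrix whose rows are a basis of $R \cap \Z^n$; this produces a change of coordinates on $\T^n$ that trivializes the problem. The harder conceptual input — that every closed connected subgroup of $\T^n$ is a subtorus — is classical, and once invoked the rest of the argument is direct.
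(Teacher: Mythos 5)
Your proof is correct and is the standard Pontryagin-duality argument for linear flows on a torus. The paper does not actually prove this proposition --- it is stated with a citation to Katok--Hasselblatt --- so there is no internal proof to compare against. Two small points your sketch leaves implicit but that are genuinely needed: (i) completing a $\Z$-basis of $R\cap\Z^n$ to a $\Z$-basis of $\Z^n$ requires $R\cap\Z^n$ to be saturated (primitive) in $\Z^n$, equivalently that $\Z^n/(R\cap\Z^n)$ be torsion-free; this holds here because if $km\in R\cap\Z^n$ with $k\neq 0$ and $m\in\Z^n$, then $m=\frac{1}{k}(km)\in R$ since $R$ is a $\Q$-subspace, hence $m\in R\cap\Z^n$. (Without saturation, the common kernel of the dual characters would in general be a disconnected closed subgroup, not a subtorus, so the Smith normal form step really does rely on this.) (ii) To upgrade ``$H$ is contained in the joint kernel of $R\cap\Z^n$'' to equality you invoke Pontryagin biduality $(H^\perp)^\perp=H$ for closed subgroups of $\T^n$; alternatively, use directly that $H$ is connected, so $\widehat{H}=\Z^n/(R\cap\Z^n)$ is torsion-free of rank $n-\rho$, giving $H\cong\T^{n-\rho}$. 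With these spelled out the argument is complete, and the identification $S=x^0+H$ with $\Gamma$ dense in $S$ is immediate.
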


\twofiglabelsize
{geod11}{Geodesic \eq{geodRn}  for $n = 2$, $(\om_1,\om_2) = (3,2)$}{fig:geod11}{0.5}
{geod12}{Geodesic \eq{geodRn}  for $n = 2$, $(\om_1,\om_2) = (1,\sqrt{2})$}{fig:geod12}{0.5}

\twofiglabelsize
{geod13}{Geodesic \eq{geodRn}  for $n = 3$, $(\om_1, \om_2, \om_3) = (1, 2, 3)$}{fig:geod13}{0.5}
{geod14}{Geodesic \eq{geodRn}  for $n = 3$, $(\om_1, \om_2, \om_3) = (1, 2, \sqrt{2})$}{fig:geod14}{0.5}

\onefiglabelsizen
{geod15}{Geodesic \eq{geodRn}  for $n = 3$, $(\om_1, \om_2, \om_3) = (1, \sqrt{3}, \sqrt{2})$}{fig:geod15}{6}

Moreover, each geodesic \eq{geodRn} loses optimality at an instant 
\be{tcut}
\tcut = \frac{1}{2 \max |\om_i|} \in (0, + \infty).
\ee
More precisely, the {\it cut time} for a geodesic $x(t)$, $t > 0$, is defined as follows:
$$
\tcut(x(\cdot)) = \sup \{ t_1> 0 \mid x(\cdot) \text{ is length minimizing on } [0, t_1]\}.
$$
The reason for the loss of optimality is intersection with a symmetric geodesic starting from the same initial point in $\T^n$, see Fig. \ref{fig:opt_T2} for the case $n = 2$.

\onefiglabelsizen
{synthT2}{Optimal synthesis on $\T^2$}{fig:opt_T2}{4}

In this work we aim to generalize the above projection of Euclidean structure  $\map{\pi}{\R^n}{\T^n}$ to a projection of a left-invariant \sR structure $\map{\pi}{G}{M}$ from a Lie group $G$ to its compact homogeneous space $M$. The simplest nontrivial case of such a projection is the case of the 3D Heisenberg group $G$ and the 3D Heisenberg nil-manifold $M$, see Sec. \ref{sec:Heis}.

Indeed, recall that a subgroup $H$ of a Lie group $G$ is called {\it uniform} (or {\it cocompact}) if the homogeneous space $G/H$ is compact. The only connected and simply connected non-Abelian 2D Lie group $\R \rtimes \R_+$ does not contain uniform subgroups. On the other hand, the 3D Heisenberg group $G$ has a countable number of uniform subgroups, of which the subgroup \eq{H} is the simplest one, {see discussion in Remark \ref{rem:uniform} at the end of the next section}.

\section{Heisenberg group and 3-dimensional 
nil-manifold}\label{sec:Heis}
Recall that the Heisenberg group is
$$G=\left\{\left(\begin{matrix} 1 & a & c\\ 0 & 1 & b\\ 0 & 0 & 1\end{matrix}
\right) \ | \ (a,b,c)\in\rr^3\right\}.$$
Consider the following discrete subgroup and its quotient (the {\em right} cosets space):
\be{H}H =\left\{\left(\begin{matrix} 1 & m & k\\ 0 & 1 & n\\ 0 & 0 & 1\end{matrix}
\right) \ | \ (m,n,k)\in\zz^3\right\}, \ \ \ M:= \bigslant{H}{G} = \{Hg \mid g \in G\}.\ee
The quotient $M$ is a compact smooth manifold, which is called {\it Heisenberg 3-dimensional 
nil-manifold.} 

Let $\pi:G\to M$ denote the canonical projection $g\mapsto Hg$. The functions 
$$a':=\{ a\}, \ b':=\{ b\}, \ c':=\{ c-[a]b\}$$ are coordinates on the homogeneous space $M$, 
and $\pi(a,b,c)=(a',b',c')$. 

\begin{remark} The manifold $M$ is not 
  diffeomorphic to the 3-torus $\tt^3=\rr^3_{a', b', c'}\slash\zz^3$, see \cite[section 5]{bock}. 
  Indeed, the first Betti number $b_1$ of the torus $\tt^3$ is equal to 3. On the other hand, $b_1(M)=2$. Indeed, the quotient projection $G\to M$ is a universal covering, since $G$ is diffeomorphic to $\rr^3_{a, b, c}$. Therefore, $\pi_1(M)=H$. The first homology of a path connected topological space is isomorphic to the quotient of the fundamental group by its commutant, by  classical Poincar\'e Theorem \cite[section 14.3, p.181]{ff}. 
  Hence, $H_1(M,\zz)=H\slash[H,H]$. The  commutant $[H,H]$ coincides with the subgroup 
    of integer unipotent matrices that differ from the identity just by the upper-right corner element. 
    Therefore, it is isomorphic to $\zz$. The map $H\mapsto(H_{12}, H_{23})$  is an isomorphism $H\slash[H,H]\to\zz^2$. Therefore, $b_1(M)=2$. 
\end{remark}

The manifold $M$ can be represented by a fundamental domain $D = \{(a, b, c) \mid 0 \leq a, b, c < 1\}$ with identified facets 
$\{b = 0\} \leftrightarrow \{b = 1\}$, $\{c = 0\} \leftrightarrow \{c = 1\}$, while the facets $\{a = 0\}$ and $\{a = 1\}$ are identified by the rule $(0,b,c)\simeq (1,b,c+b)$, see Fig. \ref{fig:nil_mfd}.

\onefiglabelsizen{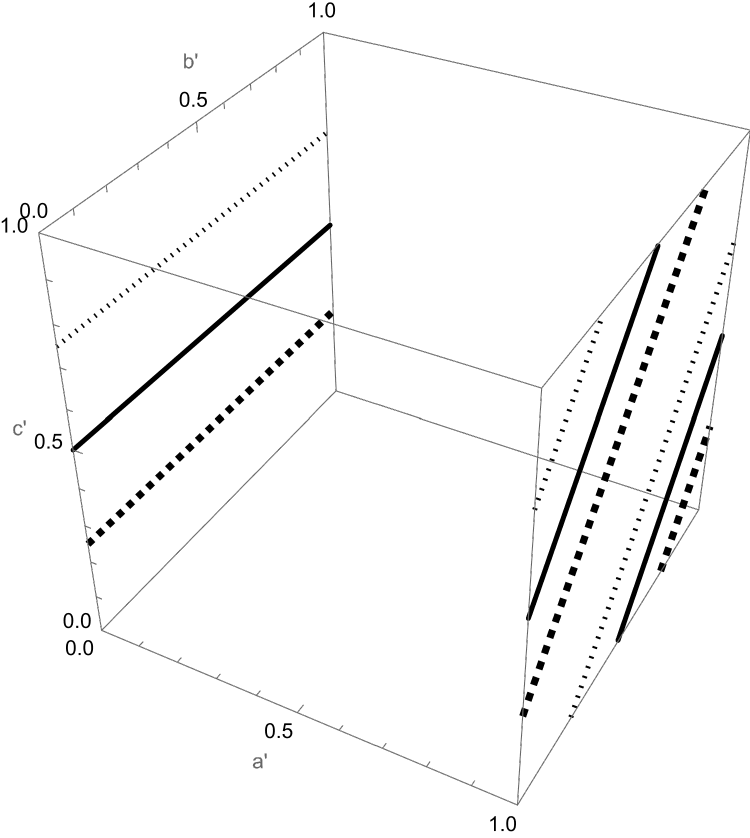}{Heisenberg 3D nil-manifold}{fig:nil_mfd}{6}

\begin{remark}\label{rem:uniform}
Any uniform subgroup of the Heisenberg group $G$ is isomorphic to a subgroup
$$
D(k) = \left\{\left(\begin{array}{ccc}
1 & a & \frac ck \\
0 & 1 & b \\
0 & 0 & 1
\end{array}\right) \mid a, b, c \in \Z \right\}
$$
for some $k \in \N$, see \cite{VGS}; in particular, $H = D(1)$. Thus any compact 3D nil-manifold is a homogeneous space $G/D(k)$. Since $G$ is simply connected, the fundamental group of such a space is $\pi_1(G/D(k)) = D(k)$.  Hence 
$$
\pi_1(M) = \pi_1(G/D(1)) = D(1) \subset D(k) = \pi_1(G/G(k)), \qquad k \in \N.
$$
 Consequently, the Heisenberg nil-manifold $M$ is the simplest compact 3D nil-manifold in the sense that it has the smallest fundamental group.  This observation motivated us to study sub-Riemannian geometry on the Heisenberg nil-manifold $M$. This work may thus  be seen as the first study of sub-Riemannian geometry of compact 3D nil-manifolds, starting from the simplest case of the Heisenberg nil-manifold $M$. We believe that our methods may be useful for the study of sub-Riemannian geometry on compact nil-manifolds of dimension~3 and greater than 3.
\end{remark}

\section{Sub-Riemannian structure on the Heisenberg group and its projection to the nil-ma\-ni\-fold}\label{sec:SR}
\subsection{Sub-Riemannian geometry}
A \sR structure \cite{montgomery_book, ABB_book} on a smooth manifold $M$ is a vector subspace distribution 
$$
\D = \{\D_q \subset T_qM \mid q \in M\} \subset TM, \qquad \dim \D_q \equiv \const, 
$$
endowed with an inner product
$$
\ip = \{\ip_q \text{ inner product in } \D_q \mid q \in M\}. 
$$
A Lipschitzian curve $\map{q}{[0, t_1]}{M}$ is called horizontal if $\dot q(t) \in \D_{q(t)}$ for almost all $t \in [0, t_1]$. The \sR length of a horizontal curve $q(\cdot)$ is
$$
l(q(\cdot)) = \int_{0}^{t_1} \langle \dot q(t), \dot q(t) \rangle^{1/2} \, dt.
$$
The \sR distance between points $q_0, q_1 \in M$ is
$$
d(q_0, q_1) = \inf \{l(q(\cdot)) \mid q(\cdot) \text{ horiz. curve s.t. } q(0) = q_0, \ q(t_1) = q_1\}.
$$
A horizontal curve is called a {\it \sR length minimizer} if its \sR length is equal to the \sR distance between its endpoints. A {\it \sR geodesic} is a horizontal curve whose sufficiently short arcs are length minimizers. Finally, a cut time along a \sR geodesic $q(\cdot)$ is
$$
\tcut(q(\cdot)) = \sup \{ \tau > 0 \mid \restr{q(\cdot)}{[0, \tau]} \text{ is a length minimizer}\}.
$$

If the distribution $\D$ is {\it completely nonholonomic} ({\it completely nonintegrable}), i.e., any points in $M$ can be connected by a horizontal curve of $\D$, then 
the \sR distance $d$ turns $M$ into a metric space, and there are naturally defined a \sR sphere of radius $R \geq 0$ centered at a point $q_0 \in M$:
$$
S_R(q_0) = \{ q \in M \mid d(q_0, q) = R\}
$$
and the corresponding \sR ball:
$$
B_R(q_0) = \{ q \in M \mid d(q_0, q) \leq  R\}.
$$

Let $X_1, \dots X_k$ be vector fields on $M$ that form an orthonormal frame of a \sR structure $(\D, \ip)$:
$$
\D_q = \spann(X_1(q), \dots, X_k(q)), \quad \langle X_i(q), X_j(q)\rangle = \d_{ij}, \qquad q \in M. 
$$
Then \sR length minimizers that connect points $q_0, q_1 \in M$ are solutions to the optimal control problem
\begin{align*}
&\dot q = \sum_{i=1}^k u_i(t) X_i(q), \qquad q \in M, \quad u = (u_1, \dots, u_k) \in \R^k, \\
&q(0) = q_0, \qquad q(t_1) = q_1, \\
&\int^{t_1}_0 \left(\sum_{i=1}^k u^2_i(t)\right)^{1/2} \, dt \to \min.
\end{align*}

\subsection{Sub-Riemannian structures on $G$ and $M$}
The left-invariant sub-Riemannian problem on the Heisenberg group
is stated as the following optimal control problem \cite{ABB_book, notes, intro}:
\begin{align}
&\dot{g} = u_1(t) X_1 (g) + u_2(t) X_2 (g), \qquad g \in G, \quad u = (u_1, u_2) \in \mathbb{R}^2, \label{SR1}\\
&g(0) = g_0 = \Id, \qquad g(t_1) = g_1, \label{SR2}\\
&\int^{t_1}_0 (u^2_1(t) + u^2_2(t))^{1/2} \, dt \to \min, \label{SR3}\\
&X_1 = \frac{\partial}{\partial x} - \frac{y}{2} \frac{\partial}{\partial z}, \quad X_2 = \frac{\partial}{\partial y} + \frac{x}{2} \frac{\partial}{\partial z}. \label{SR4}
\end{align}
The fields $X_1$, $X_2$ are left-invariant vector fields on $G$.

Here and below we use coordinates $(x, y, z)$ on the Heisenberg group $G$ such that
\be{abcxyz}
a = x, \quad b = y, \quad c = z + \frac{xy}{2}.
\ee

Geodesics for this problem have the form:
\begin{align}
&x = t\cos \theta, \label{x1}\\
&y = t\sin \theta,\label{y1}\\
&z = 0 \label{z1}
\end{align}
for $\theta \in \rr /(2 \pi \zz)$, and 
\begin{align}
&x = (\sin (\theta + h t) - \sin \theta) / h, \label{x2}\\
&y = (\cos \theta - \cos (\theta + h t)) / h, \label{y2}\\
&z = (h t - \sin h t) / (2 h^2) \label{z2}
\end{align}
for $\theta \in \rr /(2 \pi \zz)$ and $h \neq 0$.

Sub-Riemannian geodesics \eqref{x1}--\eqref{z1} are one-parametric subgroups in~$G$, they are projected to the plane $(x, y)$ into straight lines, thus we call them geodesics-lines in the sequel. Sub-Riemannian geodesics \eqref{x2}--\eqref{z2} are spirals of nonconstant slope in $\rr_{x, y, z}^3 \simeq G$, they are projected to the plane $(x, y)$ into circles, and we call them geodesics-spirals in the sequel.

Sub-Riemannian problem \eqref{SR1}--\eqref{SR4} is left-invariant on the Heisenberg group $G$, thus its projection to the nil-manifold $M$ is a well-defined sub-Riemannian problem on $M$:
\begin{align}
&\dot{g'} = u_1 X_1' (g') + u_2 X_2' (g'), \qquad g' \in M, \quad u = (u_1, u_2) \in \mathbb{R}^2, \label{SR'1}\\
&g'(0) = g_0' = \pi(\Id), \qquad g'(t_1) = g_1', \label{SR'2}\\
&\int^{t_1}_0 (u^2_1 + u^2_2)^{1/2} \, dt \to \min, \label{SR'3}\\
&X_i'(g') = \pi_*(X_i(g)), \qquad g' = \pi(g), \qquad i = 1, 2, \quad g \in G. \label{SR'4}
\end{align}
Geodesics of the projected problem \eqref{SR'1}--\eqref{SR'4} have the form $g'(t) = \pi(g(t))$, where  $g(t)$ are geodesics of the initial problem \eqref{SR1}--\eqref{SR4}.

\section[Projections of   geodesics-lines to Heisenberg nil-manifold]{Projections of   geodesics-lines \\to Heisenberg nil-manifold }\label{sec:lines} 

For every $\theta\in\rr\slash(2\pi\zz)$ consider a geodesic-line \eqref{x1}--\eqref{z1}  in $G$ 
and its projection to $M$: 
$$g(t)=(a(t), b(t), c(t))=\left( t\cos\theta, t\sin\theta, \frac{t^2}2\sin\theta\cos\theta\right), \ \ 
t\in\rr.$$
$$g'(t):= \pi\circ g(t)=(a'(t), b'(t), c'(t)),$$
\begin{equation}a'(t)=\{ t\cos\theta\},  \ b'(t)=\{ t\sin\theta\}, \ c'(t)=\left\{\frac{t^2}2\sin\theta\cos\theta-
[t\cos\theta]t\sin\theta\right\},\label{c'}\end{equation}
$$\Gamma:=\{ g'(t) \ | \ t\in\rr\}\subset M.$$
Consider the projection $\rho:M\to\tt^2=\rr^2\slash\zz^2$ and the image $\rho(\Gamma)$: 
$$\rho:(a', b', c')\mapsto(a', b'), \ \ \gamma:=\rho(\Gamma)=\{(a'(t), b'(t)) \ | \ t\in\rr\}\subset\tt^2.$$

\begin{remark} If $\theta=\frac{\pi n}2$, $n\in\zz$, then $\gamma$ and $\Gamma$ are 
both 1-periodic. If $\tan\theta\in\qq\setminus\{0\}$, then $\gamma$ is periodic, and 
$\rho^{-1}(\gamma)$ is a two-dimensional torus. 
\end{remark}

\begin{proposition} If $\tan\theta=\frac pq\in\qq\setminus\{0\}$, then the curve $\gamma$ is periodic  with period $T=\frac q{\cos\theta}$; here $(p,q)=1$. The curve $\Gamma$ is periodic 
either with the same period, as $\gamma$, if some of the numbers $p$, $q$ is even, or with twice bigger period otherwise. 
\end{proposition}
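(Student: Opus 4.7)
My plan is to reduce both periodicity claims to integrality conditions on the three matrix coordinates $(a(t), b(t), c(t))$ of the lifted geodesic $g(t) \in G$. Two elements of $G$ project to the same point of $M = H\backslash G$ iff they differ on the left by an element of $H$; since $g(0) = \Id$, this means $g'(t) = g'(0)$ precisely when $g(t) \in H$, i.e., when each of $a(t) = t\cos\theta$, $b(t) = t\sin\theta$, and $c(t) = \frac{t^2}{2}\sin\theta\cos\theta$ lies in $\Z$. Similarly, the planar projection $\gamma$ returns to its initial point iff only $a(t), b(t) \in \Z$.

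First I would treat $\gamma$. Writing $\tan\theta = p/q$ with $\gcd(p,q)=1$ (and choosing signs so that $q/\cos\theta > 0$), the requirement $t\cos\theta, t\sin\theta \in \Z$ forces the integer vector $(t\cos\theta, t\sin\theta)$ to be an integer multiple of the primitive lattice vector $(q, p)$ on the ray $\R\cdot(\cos\theta, \sin\theta)$. Consequently $t \in T\Z$ with $T = q/\cos\theta$, which yields the claimed minimal period of $\gamma$.

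Next I would evaluate $c$ at the positive multiples of $T$:
\begin{equation*}
c(kT) \;=\; \frac{(kT)^2}{2}\sin\theta\cos\theta \;=\; \frac{k^2 q^2}{2}\cdot\frac{p}{q} \;=\; \frac{k^2 pq}{2}, \qquad k \in \N.
\end{equation*}
Since any period of $\Gamma$ must also be a period of $\gamma$, it must have the form $kT$, so the minimal period of $\Gamma$ corresponds to the smallest $k \ge 1$ for which $k^2 pq$ is even. By $\gcd(p,q) = 1$, if at least one of $p, q$ is even then $pq$ is even and $k = 1$ works, so $\Gamma$ has period $T$; if instead both are odd, then $pq$ is odd and the smallest valid $k$ is $2$, giving period $2T$.

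The only step that requires genuine care is minimality --- verifying that no $t \in (0, T)$ brings $\gamma$ back to its start, and that no $t \in (T, 2T)$ brings $\Gamma$ back in the odd-odd case. Both reduce to the primitivity of $(q, p)$, i.e., to $\gcd(p,q) = 1$, so I do not anticipate any serious obstacle beyond keeping the sign conventions consistent.
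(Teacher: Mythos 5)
Your route is genuinely different from the paper's. The paper proves $T$-periodicity of $\Gamma$ by directly computing $c'(t+T)$ for arbitrary $t$ and showing it equals $\{c'(t)-\frac{pq}{2}\}$, then splitting on the parity of $pq$. You instead reduce everything to an integrality check at the single time $kT$: since $H$ is exactly the set of group elements with integer matrix entries, $g(kT)\in H$ iff $a(kT),b(kT),c(kT)\in\Z$, and the identity $c(kT)=\frac{k^2pq}{2}$ (using $T\cos\theta=q$, $T\sin\theta=p$) gives the parity dichotomy at once. This is tidier and more conceptual. The one step you elide is the passage from ``$g'(T)=g'(0)$'' to ``$g'(t+T)=g'(t)$ for all $t$'': these are equivalent here only because the geodesic-line $g(\cdot)$ is a one-parameter subgroup (a fact the paper records just before the Proposition), so $g(t+T)=g(T)\,g(t)$ and $g(T)\in H$ forces $Hg(t+T)=Hg(T)g(t)=Hg(t)$ for every $t$. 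You should state this explicitly, since for a curve that is not a subgroup orbit ``returns to start at time $T$'' does not imply $T$-periodicity; the corresponding point for $\gamma$ is the linearity of $(a(t),b(t))$ in $t$, which is immediate. Your minimality remark via primitivity of $(q,p)$ in $\Z^2$ is correct and is in fact slightly more than the paper proves, since the paper only exhibits a period without asserting minimality.
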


See Figs. \ref{fig:geod3}, \ref{fig:geod4}.

\twofiglabelsize{geod3}{The curve $\gamma$ for $\tan \theta = 3$}{fig:geod3}{0.5}
{geod4}{The curve $\Gamma$ for $\tan \theta = 3$}{fig:geod4}{0.5}

\begin{proof} One has 
\begin{equation}T\cos\theta=q\in\zz, \ \ T\sin\theta=p\in\zz,\label{tsin}\end{equation}  
$$(a(t+T),b(t+T)=(a(t)+q, b(t)+p).$$
This implies $T$-periodicity of the curve $\gamma$. One has 
$$c'(t+T)=\{ c(t+T)-[a(t+T)]b(t+T)\}$$
$$=\left\{\frac{(t+T)^2}2\sin\theta\cos\theta-[(t+T)\cos\theta](t+T)\sin\theta\right\}$$
$$=\left\{\frac{t^2}2\sin\theta\cos\theta+tT\sin\theta\cos\theta+\frac{T^2}2\sin\theta\cos\theta- 
[(t+T)\cos\theta](t+T)\sin\theta\right\}.$$
Substituting (\ref{tsin}) yields 
$$c'(t+T)=\left\{\frac{t^2}2\sin\theta\cos\theta+tq\sin\theta+\frac{pq}2-[t\cos\theta+q](t\sin\theta+p)
\right\}$$
$$=\left\{\frac{t^2}2\sin\theta\cos\theta+tq\sin\theta+\frac{pq}2-([t\cos\theta]+q)(t\sin\theta+p)\right\}$$
$$=\left\{\frac{t^2}2\sin\theta\cos\theta+tq\sin\theta+\frac{pq}2-[t\cos\theta]t\sin\theta-qt\sin\theta-
p[t\cos\theta]-pq\right\}$$
$$=\left\{\frac{t^2}2\sin\theta\cos\theta-[t\cos\theta]t\sin\theta-\frac{pq}2\right\}.$$
The latter expression is equal to either $c'(t)$, if $pq$ is even, or $\{ c'(t)+\frac12\}$ otherwise. 
In the latter case replacing $T$ by $2T$ and repeating the above argument with $q$, $p$  replaced by $2q$, $2p$ yields $c'(t+2T)=c'(t)$. The proposition is proved.
\end{proof}

\begin{theorem} \label{tgdense} The curve $\Gamma$ is dense in $M$ for every 
$\theta\in\rr$ such that 
$\tan\theta\notin\qq\cup\{\infty\}$. In this case 
each its half \ $\Gamma_{\pm}=\{ g'(t) \ | \ t\in\rr_{\pm}\}$ is dense.
\end{theorem}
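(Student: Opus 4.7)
The plan is to reduce density in $M$ to a joint equidistribution statement on $\T^2$ via a Poincar\'e section across $\{a'=a_0\}$, and then apply Weyl's equidistribution theorem for \emph{polynomial} sequences. Proposition~\ref{prop:linflow} (linear Kronecker) already gives density of the projection $\gamma\subset\T^2$, but the quadratic $t^2$--term in $c'(t)$ prevents it from yielding density in the three-dimensional manifold $M$: one must genuinely exploit irrationality of $\tan\theta$ in the quadratic term as well.

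Without loss of generality assume $\cos\theta>0$ (replacing $\theta$ by $\theta+\pi$ if necessary swaps $\Gamma_+$ with $\Gamma_-$). Fix a target point $(a_0,b_0,c_0)\in[0,1)^3$ representing an arbitrary point of $M$ and a tolerance $\varepsilon>0$. I would work with the return times
\[
t_n\;:=\;\tfrac{a_0+n}{\cos\theta},\qquad n\in\Z,
\]
chosen so that $a'(t_n)=a_0$ identically; for $n\in\N$ one has $t_n>0$. Using $[t_n\cos\theta]=n$ and substituting into \eqref{c'} a short computation gives
\begin{align*}
b'(t_n) &= \{(a_0+n)\alpha\},\\
c'(t_n) &= \bigl\{\tfrac{\alpha}{2}(a_0^2-n^2)\bigr\},
\end{align*}
where $\alpha:=\tan\theta$ is irrational by hypothesis. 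Hitting the target in $M$ thus reduces to finding $n\in\N$ for which $(\{n\alpha\},\{-n^2\alpha/2\})\in\T^2$ lies within $\varepsilon$ of the prescribed point $(b_0-a_0\alpha,\;c_0-a_0^2\alpha/2)\pmod{\Z^2}$.

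To obtain such $n$ I would verify the Weyl criterion for the sequence $x_n:=(\{n\alpha\},\{-\tfrac12 n^2\alpha\})$ in $\T^2$. For every nonzero $(k_1,k_2)\in\Z^2$ the linear combination
\[
\langle (k_1,k_2),x_n\rangle \;=\; k_1\alpha\, n \;-\; \tfrac{k_2\alpha}{2}\, n^2
\]
is a polynomial in $n$ whose highest-degree nonconstant coefficient is $\alpha k_1$ if $k_2=0$, and $-\alpha k_2/2$ if $k_2\neq 0$; both are irrational. Hence by Weyl's theorem on polynomial equidistribution the values $\langle (k_1,k_2),x_n\rangle\mod 1$ are equidistributed, and the Weyl criterion shows that $(x_n)_{n\in\N}$ is equidistributed (hence dense) in $\T^2$. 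This produces arbitrarily large positive $n$ for which $g'(t_n)$ is within $\varepsilon$ of $(a_0,b_0,c_0)$ in $M$, proving density of $\Gamma_+$; the same argument with $n\to-\infty$ proves density of $\Gamma_-$, and either implies density of $\Gamma$.

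The main obstacle is precisely this joint quadratic/linear equidistribution: linear Kronecker is insufficient because the nonlinear piece $\tfrac{t^2}{2}\sin\theta\cos\theta$ in $c'(t)$ is invisible to the $\T^2$--factor, and one must genuinely invoke the polynomial version of Weyl's theorem to promote irrationality of $\tan\theta$ into density of $\Gamma$ in the full nil-manifold. A conceptual alternative would be to cite minimality of Heisenberg nil-flows (Auslander--Green--Hahn), but the hands-on Weyl computation outlined above keeps the proof self-contained and fits the elementary style of the rest of this section.
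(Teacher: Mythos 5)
Your proof is correct, and it takes a genuinely different route to the key density statement. Both you and the paper first reduce density of $\Gamma_+$ in $M$ to density of a sequence of the form $\bigl(\{q\,A\},\{q^2 B\}\bigr)$ in $\T^2$ with $A,B$ rational multiples of the irrational $\alpha=\tan\theta$: you do so by a Poincar\'e section across the fiber $\{a'=a_0\}$ with return times $t_n=(a_0+n)/\cos\theta$, while the paper slices $\Gamma_+$ into arcs $\widehat\Gamma_q$ over $t\in[0,1/\cos\theta)$, which amounts to the same normalization (your formula $c'(t_n)=\{\tfrac{\alpha}{2}(a_0^2-n^2)\}$ agrees with the paper's $c'_q(t)=\{c'(t)-rq^2\}$, $r=\tan\theta/2$, after matching $a_0=t\cos\theta$). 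The real divergence is in how that planar density is established. The paper proves it \emph{topologically}: it encodes the sequence as the orbit of the affine skew-shift $T(x,y)=(x+\alpha,\;y+x+\beta)$ on $\T^2$ and proves minimality of $T$ from scratch via area preservation, Poincar\'e recurrence, and a doubling argument for vertical arcs in the complement of any putative closed invariant set (Theorem~\ref{tmin}); this keeps the argument self-contained and is explicitly attributed to Furstenberg's unique-ergodicity result. You instead invoke Weyl's equidistribution theorem for polynomial sequences with irrational leading coefficient, checking the Weyl criterion for every nonzero $(k_1,k_2)\in\Z^2$: the exponent $k_1\alpha n-\tfrac{k_2\alpha}{2}n^2$ has an irrational top nonconstant coefficient in all cases, so the character sums average to zero. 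Your route is shorter and yields the stronger conclusion of \emph{equidistribution} rather than mere density, at the cost of citing Weyl's theorem as a black box; the paper's route trades length for self-containment. Both are valid. One small stylistic remark: since you target an arbitrary $(a_0,b_0,c_0)\in[0,1)^3$, you might say explicitly that the fundamental domain $D=[0,1)^3$ surjects onto $M$ under $\pi$, so that hitting every such target within $\varepsilon$ indeed gives density of $\Gamma_+$ in all of $M$; this is implicit in your write-up but worth one sentence.
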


See Figs. \ref{fig:geod5}, \ref{fig:geod6}.

\twofiglabelsize{geod5}{The curve $\gamma$ for $\tan \theta = \sqrt{2}$}{fig:geod5}{0.5}
{geod6}{The curve $\Gamma$ for $\tan \theta = \sqrt{2}$}{fig:geod6}{0.5}

As it is shown below, Theorem \ref{tgdense} is implied by the following theorem.

\begin{theorem} \label{conj} The sequence $\{(\{2rn\}, \{ rn^2\})\in\tt^2 \ | \ n\in\nn\}$ is dense in $\tt^2$ for every $r\in\rr\setminus\qq$. 
\end{theorem}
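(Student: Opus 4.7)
The plan is to establish the stronger statement that the sequence $x_n = (\{2rn\},\{rn^2\}) \in \tt^2$ is \emph{equidistributed} on $\tt^2$, which in particular gives density. The natural tool is Weyl's equidistribution criterion: it suffices to prove that for every nonzero $(k_1,k_2)\in\zz^2$,
$$
S_N(k_1,k_2) \;:=\; \frac{1}{N}\sum_{n=1}^{N} e^{2\pi i\,(2k_1 r n \,+\, k_2 r n^2)} \;\longrightarrow\; 0
\quad\text{as } N\to\infty.
$$

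First I would dispose of the case $k_2 = 0$, $k_1\neq 0$. Here the sum is a geometric progression in the ratio $e^{4\pi i k_1 r}$, and since $r\notin\qq$ and $k_1\neq 0$, the real number $2k_1 r$ is not an integer, so the geometric sum is bounded by a constant $C(k_1,r)$ independent of $N$; dividing by $N$ then gives $S_N\to 0$.

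The main case is $k_2\neq 0$, and this is the key step. Here one views the exponent as the polynomial $p(n) = k_2 r\, n^2 + 2k_1 r\, n$, whose leading coefficient $k_2 r$ is irrational (integer times irrational). This is the setting of Weyl's equidistribution theorem for polynomial sequences, which states that if a real polynomial of degree $d\geq 1$ has at least one irrational non-constant coefficient, then $(p(n))_{n\in\nn}$ is equidistributed $\mod 1$. Applied to our $p$, this immediately yields $S_N(k_1,k_2)\to 0$. I would cite Weyl's theorem (e.g., from \cite{kathas}), since reproving it would duplicate a classical argument; if a self-contained treatment is desired, one invokes van der Corput's differencing inequality
$$
\Bigl|\tfrac{1}{N}\sum_{n=1}^N e^{2\pi i p(n)}\Bigr|^2 \;\leq\; \tfrac{1}{H+1} + \tfrac{2}{H+1}\sum_{h=1}^{H}\Bigl|\tfrac{1}{N}\sum_{n=1}^{N-h} e^{2\pi i (p(n+h)-p(n))}\Bigr|,
$$
and notes that $p(n+h)-p(n) = 2 k_2 r h\, n + (\text{constant in } n)$ is linear in $n$ with irrational slope $2k_2 r h$ for every $h\geq 1$, reducing the problem to the already-handled linear case and letting $H\to\infty$ after $N\to\infty$.

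Combining the two cases, Weyl's criterion gives equidistribution of $(x_n)$ in $\tt^2$, which implies density and proves the theorem. The main obstacle is the quadratic Weyl sum in the second case; everything else is elementary. The reason the hypothesis $r\notin\qq$ enters decisively is precisely that it guarantees $k_2 r \notin\qq$ for every nonzero $k_2\in\zz$, which is exactly what the polynomial Weyl estimate needs.
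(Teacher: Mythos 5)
Your proof is correct, but it follows a genuinely different route from the paper's. The paper first observes that $(\{2rn\},\{rn^2\}) = T^n(0,0)$ for the skew-product torus map $T(x,y) = (x+\alpha,\,y+x+\beta)$ with $\alpha=2r$, $\beta=r$, and then proves the purely topological statement (their Theorem \ref{tmin}) that $T$ is \emph{minimal} whenever $\alpha$ is irrational. Their argument is dynamical and Fourier-free: assuming a non-dense orbit, they look at the open complement $V$ of its limit set, show $V$ cannot contain a full vertical fiber (since the base rotation by $\alpha$ is minimal), and then use the area-preserving property of $T$ together with Poincar\'e recurrence and the fact that $T^n$ shears horizontal segments to slope $n$ to manufacture, by induction, vertical arcs of doubling length inside $V$ --- eventually a whole fiber, a contradiction. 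Your route instead goes through Weyl's criterion: reduce density to decay of the exponential sums $\frac1N\sum_{n\le N} e^{2\pi i(2k_1rn+k_2rn^2)}$, handle $k_2=0$ by a geometric series, and handle $k_2\neq 0$ by Weyl's polynomial equidistribution theorem (or directly by van der Corput differencing, which correctly reduces the quadratic case to the linear one because $p(n+h)-p(n)$ has irrational slope $2k_2rh$). The trade-off is clear: your Fourier-analytic argument is shorter if one is willing to cite Weyl, and it yields the strictly stronger conclusion of \emph{equidistribution} rather than mere density; the paper's argument is self-contained at a more elementary level (no exponential sums), proves minimality of a whole family of skew products, and fits the dynamical theme of the surrounding sections. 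The paper itself notes that an even stronger statement --- unique ergodicity of $T$ --- is due to Furstenberg \cite{furst}, which subsumes your equidistribution result; so your approach sits between the paper's self-contained density proof and the cited Furstenberg result in strength.
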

Theorem \ref{conj} follows from a more general  result due to H.Furstenberg, see \cite[lemma 2.1]{furst}, which yields unique ergodicity of the torus map (\ref{tab}). Below we present a proof of Theorem \ref{conj} for completeness of presentation.

\begin{remark} It is known that for every real polynomial $P(n)=\alpha_0 n^m+\alpha_1 n^{m-1}+\dots
+\alpha_m$ with $\alpha_0\notin\qq$ the values $P(n)$ are equidistributed (thus, dense) 
on the segment $[0,1]$ (Furstenberg's theorem, see \cite[exercise 4.2.7]{kathas}). This theorem also follows from the above-mentioned Furstenberg's result on unique ergodicity of map (\ref{tab}).\end{remark}

\begin{proof} {\bf of Theorem \ref{tgdense} modulo Theorem \ref{conj}.} We prove the  statement of 
Theorem \ref{tgdense} for the half-curve $\Gamma_+$; the proof for $\Gamma_-$ is analogous.

Let us do the above calculation with 
$$T=\frac q{\cos\theta}, \ \ q\in\zz_{\geq0}:$$
$$T\cos\theta=q\in\zz_{\geq0}, \ \ T\sin\theta=q\tan\theta,$$
$$c'(t+T)=\{ c(t+T)-[a(t+T)]b(t+T)\}$$
$$=\left\{\frac{(t+T)^2}2\sin\theta\cos\theta-[(t+T)\cos\theta](t+T)\sin\theta\right\}$$
$$=\left\{\frac{t^2}2\sin\theta\cos\theta+tT\sin\theta\cos\theta+\frac{T^2}2\sin\theta\cos\theta- 
[(t+T)\cos\theta](t+T)\sin\theta\right\}$$
$$=\left\{\frac{t^2}2\sin\theta\cos\theta+tq\sin\theta+\frac{q^2\tan\theta}2-([t\cos\theta]+q)(t\sin\theta+q\tan\theta)\right\}$$
\begin{equation}=c'_q(t):= \left\{\frac{t^2}2\sin\theta\cos\theta-[t\cos\theta]t\sin\theta-[t\cos\theta]q\tan\theta-
q^2\frac{\tan\theta}2\right\}.\label{cqt}\end{equation}

Set 
$$\wh\Gamma_q:=\{\Gamma(t+\frac q{\cos\theta}) \ | \ t\in [0,\frac1{\cos\theta})\} \ \ \text{ for every } \ q\in\zz_{\geq0}.$$ 
By definition, 
\begin{equation}\Gamma_+\cup\{ g'(0)\}=\cup_{q\in\zz_{\geq0}}\wh\Gamma_{q}.\label{gu}\end{equation} 
 Set 
$$r=\frac{\tan\theta}2.$$
Each curve $\wh\Gamma_q$ admits the coordinate representation  
\begin{equation}\wh\Gamma_{q}(t)=(a'(t), \{ b'(t)+2qr\}, \{ c'(t)-rq^2\}),\label{gqc}\end{equation}
since for every $t\in[0,\frac1{\cos\theta})$ one has 
$$c'_{q}(t)=\left\{\frac{t^2}2\sin\theta\cos\theta-[t\cos\theta]t\sin\theta-
q^2\frac{\tan\theta}2\right\}=\{ c'(t)-rq^2\}:$$
$[t\cos\theta]=0$, whenever $t\in[0,\frac1{\cos\theta})$. 
For every $t$ the sequence $(\{ b'(t)+2qr\}, \{ c'(t)-rq^2\})$  is dense in $[0,1)\times[0,1)$.  
To prove this, it suffices to show that the sequence $(\{2qr\},\{ -rq^2\})$ is dense in $[0,1)\times[0,1)$. Or equivalently, density of the projection to $\tt^2=\rr^2\slash\zz^2$ 
of the sequence $(2qr,-rq^2)$. Indeed, the projection to $\tt^2$  of the sequence $(2qr,rq^2)$ 
is dense, by Theorem \ref{conj}. The sequence $(2qr,-rq^2)$ is obtained from the latter 
sequence with dense projection by the symmetry $(x,y)\mapsto(x,-y)$, which is the lifting 
to $\rr^2$ of torus automorphism given by the same formula. Every torus  automorphism 
sends any dense subset to a dense subset. The latter symmetry sends the projection of the 
sequence $(2qr,rq^2)$ to the projection of the sequence  $(2qr,-rq^2)$. Therefore, the 
latter projection is dense. Hence, the sequence $(\{ b'(t)+2qr\}, \{ c'(t)-rq^2\})$  is dense in $[0,1)\times[0,1)$ for every $t\in[0,\frac1{\cos\theta})$. This together with (\ref{gqc}) and (\ref{gu}) implies density of the curve $\Gamma_+$ in $M$. 
\end{proof}

For the proof of Theorem \ref{conj} let us introduce the torus map 
\begin{equation} T:\tt^2\to\tt^2, \ T(x,y)=(x+\alpha, y+x+\beta).\label{tab}\end{equation}
\begin{proposition} Set $\alpha=2r$, $\beta=r$. Then 
\begin{equation} (\{2rn\}, \{ rn^2\})=T^n(0,0).\label{2rnn}\end{equation}
\end{proposition}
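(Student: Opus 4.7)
The plan is to prove the proposition by straightforward induction on $n \in \zz_{\geq 0}$. The whole statement rests on one algebraic identity: with $\alpha = 2r$ and $\beta = r$, the ``skew-product'' piece $y \mapsto y + x + \beta$ is designed exactly so that $rn^2 + (2rn + r) = r(n+1)^2$. Recognising this matching is really the content of the proposition; the rest is bookkeeping on $\tt^2$.

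First I would dispatch the base case $n = 0$, where both sides equal $(0,0) \in \tt^2$. For the inductive step, I assume $T^n(0,0) = (\{2rn\}, \{rn^2\})$ and apply $T$. Working on $\tt^2$ (so both coordinates are taken $\mod 1$), the first coordinate of $T^{n+1}(0,0)$ is
\[
\{\{2rn\} + 2r\} = \{2rn + 2r\} = \{2r(n+1)\},
\]
and the second is
\[
\{\{rn^2\} + \{2rn\} + r\} = \{rn^2 + 2rn + r\} = \{r(n+1)^2\},
\]
using $(n+1)^2 = n^2 + 2n + 1$. This closes the induction and yields \eqref{2rnn}.

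There is no genuine obstacle here; the only care required is to verify that the reductions $\mod 1$ commute with the addition performed by $T$, which is automatic since $T$ is by definition an endomorphism of $\tt^2$. The interest of the proposition is not the proof itself but the observation it enables: it reduces the density statement of Theorem \ref{conj} to the dynamics of the skew translation \eqref{tab}, to which Furstenberg's unique ergodicity result (cited in the text) applies.
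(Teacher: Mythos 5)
Your proof is correct and follows essentially the same route as the paper: a direct induction using the identity $rn^2 + 2rn + r = r(n+1)^2$, with only the cosmetic difference that you start the induction at $n=0$ while the paper starts at $n=1$.
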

\begin{proof} Induction in $n$. 

Induction base: for $n=1$ one has $T(0,0)=(2r,r)(\mod\zz^2)$. 

Induction step. Let $T^n(0,0)=(2rn,rn^2)(\mod\zz^2)$. Then modulo $\zz^2$, 
$$T^{n+1}(0,0)=(2r(n+1), rn^2+2rn+r)=(2r(n+1),r(n+1)^2).$$
The induction step is done. The proposition is proved.
\end{proof}

\begin{theorem} \label{tmin} 
For every $\alpha\in\rr\setminus\qq$ and every $\beta\in\rr$ the map $T$ given 
by (\ref{tab}) is minimal: each its forward orbit is dense. 
\end{theorem}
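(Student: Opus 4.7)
The plan is to derive Theorem~\ref{tmin} from Weyl's classical equidistribution theorem for polynomial sequences, by first writing the iterates of $T$ explicitly and then applying Weyl's criterion.

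A straightforward induction on $n$, analogous to the proof of the preceding proposition, yields
\[
T^n(x_0, y_0) = \left(x_0 + n\alpha,\ y_0 + n x_0 + \tfrac{n(n-1)}{2}\alpha + n\beta\right) \pmod{\zz^2},
\]
so the second coordinate is a polynomial in $n$ of degree two with leading coefficient $\alpha/2$. Proving density of each forward $T$-orbit in $\tt^2$ is thus equivalent to showing that for every $(x_0, y_0) \in \tt^2$ the sequence
\[
\left(\{x_0 + n\alpha\},\ \{y_0 + n x_0 + \tfrac{n(n-1)}{2}\alpha + n\beta\}\right)_{n \geq 1}
\]
is dense in $[0,1)^2$.

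I would in fact prove the stronger statement that this sequence is equidistributed in $\tt^2$, by means of Weyl's criterion. For each fixed $(k,\ell) \in \zz^2 \setminus \{(0,0)\}$, the relevant exponential sum is $N^{-1} \sum_{n=1}^{N} e^{2\pi i P(n)}$, where
\[
P(n) = k(x_0 + n\alpha) + \ell\bigl(y_0 + nx_0 + \tfrac{n(n-1)}{2}\alpha + n\beta\bigr)
\]
is a polynomial of degree at most two in $n$. When $\ell \neq 0$, its leading coefficient equals $\ell\alpha/2$, which is irrational; when $\ell = 0$ but $k \neq 0$, the polynomial is linear with irrational slope $k\alpha$. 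In either case, Weyl's theorem on polynomial equidistribution forces the average to tend to $0$ as $N \to \infty$. By Weyl's criterion the orbit is equidistributed, hence dense, so $T$ is minimal.

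The main nontrivial ingredient is Weyl's polynomial equidistribution theorem in degree two; its standard proof uses van der Corput (Weyl) differencing to reduce the degree-two sum to linear exponential sums $\sum_n e^{2\pi i h n\alpha}$ with integer $h \neq 0$, which average to $0$ by irrationality of $\alpha$. The remaining bookkeeping (expanding $P$ explicitly and locating its leading coefficient) is routine. I expect the differencing step to be the only genuine obstacle, though it is entirely classical and can be cited as a black box if desired.
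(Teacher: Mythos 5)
Your proof is correct, and it takes a genuinely different route from the paper's. You compute $T^n(x_0,y_0)$ explicitly, observe that the second coordinate is a quadratic polynomial in $n$ with leading coefficient $\alpha/2$, and then invoke Weyl's equidistribution theorem for polynomial sequences with irrational leading coefficient (via Weyl's criterion and van der Corput differencing). Both branches of your case analysis on $(k,\ell)$ are correct: when $\ell\neq 0$ the $n^2$-coefficient $\ell\alpha/2$ is irrational, and when $\ell=0$, $k\neq 0$ the linear coefficient $k\alpha$ is irrational, so the exponential sums tend to zero. This yields equidistribution of every forward orbit, which is strictly stronger than density.

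The paper deliberately avoids this classical route. It explicitly notes that Theorem~\ref{conj} (and hence Theorem~\ref{tmin}) follows from Furstenberg's unique ergodicity of the skew product \eqref{tab}, which in turn recovers Weyl's polynomial equidistribution, but offers instead a self-contained topological argument ``for completeness of presentation.'' That argument runs by contradiction: one supposes some forward orbit has a nonempty open invariant complement $V$, shows that $V$ contains no full vertical fiber (minimality of the base rotation), and then uses area preservation together with Poincar\'e recurrence and the shear structure of $T$ (images of horizontal segments under $T^n$ have slope $n$, becoming nearly vertical) to inductively produce vertical arcs inside $V$ of geometrically growing length, forcing $V$ to contain a full fiber --- a contradiction. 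What your approach buys is brevity and a quantitatively stronger conclusion (equidistribution), at the cost of invoking Weyl's theorem as an external ingredient; what the paper's approach buys is self-containedness, requiring only elementary measure-preserving dynamics (Poincar\'e recurrence) rather than the analytic machinery of exponential sums.
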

\begin{proof} Suppose the contrary: there exists a point $(x_0,y_0)$ with non-dense forward 
orbit. Let $M$ denote the set of limit points of its orbit.  (The orbit is non-periodic, as is the rotation $x\mapsto x+\alpha$.) The set $M$ is a non-empty 
closed subset in $\tt^2$ with 
a non-empty open complement 
$$V:=\tt^2\setminus M.$$

\begin{remark} The set $M$ is $T$- and $T^{-1}$-invariant, hence so is $V$.
\end{remark}
\begin{proposition} \label{profib} The set $V$ contains no fiber $z\times S^1$. 
\end{proposition} 
\begin{proof} Suppose the contrary: $V$ contains such a fiber. Then there exists an interval 
 neighborhood 
$U=U(z)\subset S^1$ such that $U\times S^1\subset V$. But the successive images $T^m(U\times S^1)$ cover all of $\tt^2$: the images of the interval $U$ by translations 
$x\mapsto x+m\alpha$ cover all of $S^1$, since $\alpha$ is irrational. Therefore, $V=\tt^2$. 
The contradiction thus obtained proves the proposition.
\end{proof}

Fix $a,b, c, d\in[0,1)$, $a<b$, $c<d$, such that 
$$\Pi:=[a,b]\times[c,d]\subset V.$$
 Set 
$$h:=d-c.$$
\begin{proposition} \label{proarc} For every $k\in\zz_{\geq0}$ there exists a fiber $S^1_k=z_k\times S^1$ 
such that $S^1_k\cap V$ contains an arc of length greater than $2^{k}h$. In the case, when 
$2^kh>1$, this means that the whole fiber $S^1_k$ lies in $V$. 
\end{proposition} 
\begin{proof} The proof is based on area-preserving property of the map $T$ and 
the fact that for every $N\in\nn$ the iterate $T^N$ lifted to 
$\rr^2$ transforms horizontal lines to lines with the slope (the tangent of angle with the horizontal axis) equal to $N$. Thus, as $N\to\infty$, the images of horizontal lines tend to vertical lines. 
Therefore, the images of a rectangle become  very long strips spiralling in nearly vertical direction. 

Induction in $k$. 

Induction base for $k=0$. Each fiber 
$z\times S^1$, $z\in[a,b]$, intersects $V$ by an arc strictly containing the arc $z\times[c,d]$ of 
length $h$, and thus, having a bigger length. 

Induction step. Let there exist a $z_k\in S^1$ such that the intersection $(z_k\times S^1)\cap V$ 
contains a segment $z_k\times[c_k,d_k]$ of length $c_k-d_k>2^kh$. Let us show that $V$ contains a vertical circle arc of twice bigger length.  To do this, fix an $\var>0$ such that 
$$\Pi_k:=[z_k-3\var,z_k+3\var]\times[c_k,d_k]\subset V.$$
Fix a $\delta\in(0,\var)$ such that $2\delta<h$. For every $N\in\nn$ there exists a point 
$$(x_N,y_N)\in \Pi_{k,\delta}:=[z_k-\var,z_k+\var]\times[c_k,c_k+\delta]\subset\Pi_k$$
such that 
$$p_n:=T^n(x_N,y_N)\in\Pi_{k,\delta} \text{ for some } n>N,$$
 by area-preserving property and 
the Poincar\'e Recurrence Theorem \cite[theorem 4.1.19]{kathas}. 

\smallskip

{\bf Claim 1.} {\it Let us  choose 
$N>\frac{d_k-c_k+2\delta}\var$. Let $(x_N, y_N)$ and $n$ be as above.  Then the $T^n$-image of the lower horizontal side 
$L:=[z_k-3\var,z_k+3\var]\times c_k$  of the rectangle $\Pi_k$ intersects its  upper 
 side at some point $q_k$. See Fig.12.}
\begin{figure}[ht]
  \begin{center}
   \epsfig{file=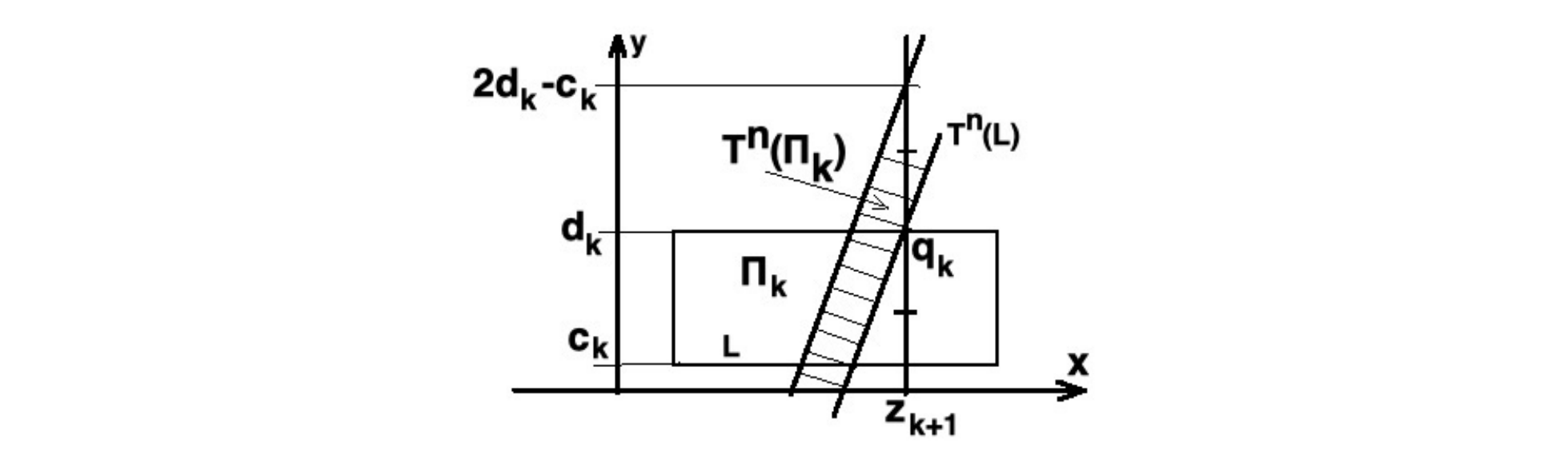, width=33em}
    \caption{The rectangle $\Pi_k$, its image $T^n(\Pi_k)$ and the intersection point $q_k$.}
    \label{fig:01}
  \end{center}
\end{figure}

\begin{proof} The point $(x_N,c_k)$ lies in the lower side $L$, $|x_N-z_k|\leq\var$, and 
$0\leq y_N-c_k\leq\delta$. The $y$-coordinate of its image,  
$y_{n}:=y(T^n(x_N,c_k))$ also differs from $c_k$ by a quantity no greater than $\delta$, since 
it is no greater than that  $y_{n}':= y(T^n(x_N, y_N))\in[c_k,c_k+\delta]$, $y'_n-y_n=y_N-c_k
\in[0,\delta]$: the map  $T$ preserves the lengths 
of arcs of vertical fibers. The $x$-coordinate of the same image $T^n(x_N,c_k)$ lies in the segment $[z_k-\var,z_k+\var]$. This together with the inequality on $N$ and the fact that the $T^n$-image of a horizontal 
segment has slope $n>N$ 
implies that $T^n(L)$ crosses the upper side of the rectangle $\Pi_k$. 
\end{proof}
  
Let $q_k=(z_{k+1},d_k)=T^n(p_k)$ be a point of the above crossing, $p_k=(s_k,c_k)\in L$. Then $s_k\times[c_k,d_k]\subset \Pi_k\subset V$, 
by assumption. Therefore, $z_{k+1}\times[d_k, d_k+(d_k-c_k)]=T^n(s_k\times[c_k,d_k])\subset V$, by invariance of the set $V$. Finally, the vertical circle 
arc $z_{k+1}\times[c_k, 2d_k-c_k]$ of length 
$2(d_k-c_k)>2^{k+1}h$ lies in $V$. The induction step is done. Proposition \ref{proarc} is proved.
\end{proof} 

Proposition \ref{proarc} applied to $k$ large enough implies that $V$ contains a vertical fiber 
$z\times S^1$. This contradicts Proposition \ref{profib}. Theorem \ref{tmin} is proved.
\end{proof} 

Theorem \ref{tmin} implies Theorem \ref{conj}, and hence, Theorem \ref{tgdense}.

\section[Projections of   geodesics-spirals to Heisenberg nil-manifold]{Projections of   geodesics-spirals \\to Heisenberg nil-manifold }\label{sec:spirals}

For every $\theta\in\rr\slash(2\pi\zz)$, $h \neq 0$ consider a geodesic-spiral 
 \eqref{x2}--\eqref{z2}  in $G$ 
and its projection to $M$: 
\begin{align*}
&a(t) = (\sin (\theta + h t) - \sin \theta) / h, \\
&b(t) = (\cos \theta - \cos (\theta + h t)) / h, \\
&c(t) = (h t - \sin h t) / (2 h^2)  + a(t) b(t)/2,
\end{align*}
$$g(t)=(a(t), b(t), c(t)), \ \ 
t\in\rr.$$
$$g'(t):= \pi\circ g(t)=(a'(t), b'(t), c'(t)),$$
$$\Gamma:=\{ g'(t) \ | \ t\in\rr\}\subset M.$$

The projection $G\to \rr\times\rr$,  $(a, b, c)\mapsto (a,b)$ passes to the quotient and 
induces the projection 
$$p:M\to\tt^2=S^1\times S^1, \ \ S^1=\rr\slash\zz.$$
 
\begin{theorem} \label{tspirals} 1) The projection $p$ sends each geodesic-spiral $\Gamma$, see 
\eqref{x2}--\eqref{z2},  to a contractible closed 
curve  $\gamma\subset\tt^2$ that may have self-intersections. 

2) The geodesic-spiral $\Gamma$ is 

- either closed, which holds if and only if $h^2\in\pi\mathbb Q \setminus \{0\}$;

- or dense in the preimage $p^{-1}(\gamma)\subset M$, if 
 $h^2\notin\pi\mathbb Q$. 
 \end{theorem}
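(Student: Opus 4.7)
My plan is to reduce both parts to a single shift formula on the Heisenberg lift
$g(t)$, which turns the iteration $t\mapsto t+T$ (with $T:=2\pi/|h|$) into
multiplication by a central element of $G$, and then to read off the dichotomy from
a circle rotation on the $c'$-fiber. For part (1), I would eliminate $t$ from the
expressions for $a(t)$, $b(t)$ using $\sin^2+\cos^2=1$ to obtain
\[
\Bigl(a(t)+\tfrac{\sin\theta}{h}\Bigr)^{\!2} + \Bigl(b(t)-\tfrac{\cos\theta}{h}\Bigr)^{\!2}
= \tfrac{1}{h^2}.
\]
Hence $(a(t), b(t))$ traverses a circle in $\rr^2$ of radius $1/|h|$ periodically
with period $T$. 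This is a bounded loop in $\rr^2$, so its projection $\gamma$ to
$\tt^2$ is a closed curve, contractible in $\tt^2$ (the loop already lifts to a loop
in the universal cover $\rr^2$). Its self-intersections correspond exactly to pairs
of points on the circle differing by a vector in $\zz^2$, which can occur once
$2/|h|$ is large compared to~$1$.

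For the key shift formula, using the Heisenberg product
\[
(a_1,b_1,c_1)\cdot(a_2,b_2,c_2) = (a_1+a_2,\; b_1+b_2,\; c_1+c_2+a_1 b_2)
\]
together with the elementary identities $a(t+T)=a(t)$, $b(t+T)=b(t)$, and
$c(t+T)=c(t)+\pi\operatorname{sgn}(h)/h^2$, I verify
\[
g(t+T) = g(T)\cdot g(t), \qquad g(T) = (0,0,\pi\operatorname{sgn}(h)/h^2)\in Z(G),
\]
where $Z(G)=\{(0,0,c)\mid c\in\rr\}$ is the center. Iterating,
$g(t+nT)=g(T)^n\cdot g(t)$ with $g(T)^n=(0,0,n\pi\operatorname{sgn}(h)/h^2)$, and
passing to the $(a',b',c')$-coordinates on $M$ yields
\[
g'(t+nT) = \bigl(a'(t),\; b'(t),\; \{c'(t)+n\pi\operatorname{sgn}(h)/h^2\}\bigr).
\]

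For part (2), everything now reduces to the orbit of the circle rotation
$\alpha\mapsto\alpha+\pi\operatorname{sgn}(h)/h^2$ on $\rr/\zz$. If
$h^2\in\pi\qq\setminus\{0\}$, writing $\pi/h^2=m/k$ in lowest terms gives
$g(T)^k\in H\cap Z(G)$, hence $g'(t+kT)=g'(t)$ for all $t$, so $\Gamma$ is a closed
curve of period $kT$. If $h^2\notin\pi\qq$, the rotation is irrational, so for every
fixed $t\in[0,T)$ the set $\{g'(t+nT):n\in\nn\}$ is dense in the fiber
$p^{-1}(a'(t),b'(t))\cong\rr/\zz$. Letting $t$ range over $[0,T)$ and noting that
$p^{-1}(\gamma)$ is closed in $M$, I conclude $\overline{\Gamma}=p^{-1}(\gamma)$. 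The
converse direction of the closedness criterion is then automatic by a dimension
count: a closed (periodic) geodesic has compact $1$-dimensional image, whereas
$p^{-1}(\gamma)$ is $2$-dimensional. The main obstacle I anticipate is purely
bookkeeping: tracking the $\operatorname{sgn}(h)$ factor and the integer-part
correction $[a(t)]\,b(t)$ when passing from the $(a,b,c)$-coordinates on $G$ to the
$(a',b',c')$-coordinates on $M$, so that the shift on the $c'$-circle comes out
exactly as $\pi\operatorname{sgn}(h)/h^2$. Once this formula is in place, both
implications follow directly from the rational/irrational dichotomy of the rotation.
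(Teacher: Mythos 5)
Your proof is correct and takes essentially the same route as the paper: both reduce the dichotomy to the observation that under the period shift $t\mapsto t+2\pi/h$ the coordinates $a,b$ return while $c'$ advances by $\pm\pi/h^2\pmod 1$, and then invoke the rational/irrational dichotomy for circle rotations. Your group-theoretic packaging (identifying $g(T)$ as a central element and writing $g(t+T)=g(T)\cdot g(t)$) is a cleaner way to organize the same computation, and your lift-to-$\R^2$ argument makes the contractibility claim in part (1) more explicit than the paper does, but the mathematical content is the same.
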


See Figs. \ref{fig:geod9n}--\ref{fig:geod8}. 

\twofiglabelsize{geod9n}{The curve $\gamma$ for $h = \sqrt{\pi/2}$}{fig:geod9n}{0.5}
{geod10n}{The curve $\Gamma$ for $h = \sqrt{\pi/2}$}{fig:geod10n}{0.5}

\twofiglabelsize{geod7}{The curve $\gamma$ for $h = 1$}{fig:geod7}{0.5}
{geod8}{The curve $\Gamma$ for $h = 1$}{fig:geod8}{0.5}

 \begin{proof} The projection $p(\Gamma)\subset\tt^2$ of the spiral geodesic $\Gamma$ given by  \eqref{x2}--\eqref{z2} is $\frac{2\pi}h$-periodic in $t$, as are the right-hand sides in \eqref{x2} 
 and \eqref{y2}. Therefore, it is a closed curve. Statement 1) is proved.
 
 The coordinate $c'=\{ c-[a]b\}$ of a point of the geodesic $\Gamma$ is equal to 
\begin{equation}c'(t)=\left\{ \frac t{2h}-\frac{\sin ht}{2h^2}+\frac{x(t)y(t)}2-[x(t)]y(t)\right\},\label{c'sp}
\end{equation}
 where $x(t)$, $y(t)$ are given by \eqref{x2} and \eqref{y2} respectively. All the terms in 
the right-hand side in  (\ref{c'sp}) except for the first one are $\frac{2\pi}h$-periodic functions 
in $t$. Adding $\frac{2\pi}h$ to $t$ results in adding $\frac{\pi}{h^2}$ to the first term $\frac t{2h}$. 
Therefore, closeness of the geodesic $\Gamma$ is equivalent to commensurability of the 
numbers $h^2$ and $\pi$. If they are incommensurable, then the sequence of the 
numbers $\{\frac{\pi n}{h^2}\}$ with $n\in\nn$ is dense in $[0,1]$. This together with the above discussion implies that $\Gamma$ is dense in $p^{-1}(\gamma)$. The theorem is proved.
\end{proof}

\section[Dynamics of the normal Hamiltonian flow on $T^*M$]{Dynamics of the normal Hamiltonian flow \\on $T^*M$}\label{sec:dyn}
Let $T^*M$ be the cotangent bundle of the Heisenberg nil-manifold $M$. Introduce linear on fibers of $T^*M$ Hamiltonians $h_i(\lam) = \langle\lam, X_i'\rangle$, $i = 1, 2, 3$, where $X_3' = [X_1', X_2'] =\frac{\partial}{\partial z'}$.

Let $H(\lam) = (h_1^2(\lam)+h_2^2(\lam))/2$ be the normal Hamiltonian of Pontryagin maximum principle \cite{ABB_book, intro} for the sub-Riemannian problem \eqref{SR'1}--\eqref{SR'4}, and let $\vH$ be the corresponding Hamiltonian vector field on~$T^*M$.
Sub-Riemannian geodesics on $M$ are projections of trajectories of the normal Hamiltonian system
\begin{align}
&\dot \lam = \vH(\lam), \qquad \lam \in T^*M,
\intertext{in coordinates}
&\dot h_1 = - h_2 h_3, \quad \dot h_2 = h_1 h_3, \quad \dot h_3 = 0, \quad \dot g' = h_1 X_1' + h_2 X_2', \label{dh1}
\end{align} 
this follows from the classical coordinate expression of the Hamiltonian system for sub-Riemannian geodesics on the Heisenberg group \cite{ABB_book, intro}.

Each level surface
$$
S_p = \{\lam \in T^*M \mid H(\lam) = 1/2, \ h_3(\lam) = p\}, \qquad p \in \rr,
$$
is invariant for the field $\vH$. 
Denote on this level surface $h_1 = \cos \theta$, $h_2 = \sin \theta$.
Denote also the restriction $V_p = \restr{\vH}{S_p}$. 
The ODE 
\be{dlam}
\dot \lam = V_p(\lam), \qquad \lam \in S_p,
\ee
 reads in coordinates as
\begin{equation}\begin{cases}
\dot \theta = p, \\
\dot a' = \cos \theta, \\
\dot b' = \sin \theta, \\ 
\dot c' = a' \sin \theta,\end{cases}\label{dlamn}
\end{equation}
this follows immediately from  ODEs \eq{dh1} via the transformation formulas~\eq{abcxyz}.

Set 
$$S^1_\nu:=\rr_\nu\slash\zz \ \ \ \text{ for } \nu=a, b, c; \ \ 
S^1_\theta=\rr_{\theta}\slash2\pi\zz.$$
Let us introduce yet another flow on $S_p=S^1_\theta\times M$. 

\begin{definition} The {\it $p$-standard flow} $F^t_p$ 
on $S^1_\theta\times M$ is given in the coordinates by the equation
\begin{equation}\begin{cases} \dot\theta=p,\\
\dot a'=0,\\ \dot b'=0, \\ \dot c'=\frac{1}{2p}.\end{cases}\label{stpflow}\end{equation}
\end{definition}
\begin{remark} The projection of the vector field given by (\ref{stpflow}) to the 
2-torus $\tt^2_{\theta,c'}$ is the linear vector field $\dot\theta=p$, $\dot c'=\frac{1}{2p}$. 
Its  flow map in time $\frac{2\pi}p$ fixes each circle $\{\theta\}\times S^1_{c'}$ and 
acts on it by translation (rotation) 
$$c'\mapsto c'+\rho, \ \ \ \ \   \rho:=\frac{\pi}{p^2}.$$
The number $\rho$ is its rotation number, see the definition of rotation number in \cite[p. 104]{arn}. 
\end{remark}
\begin{theorem} \label{tnflow} Flow (\ref{dlam}) is conjugated to the $p$-standard flow by 
a diffeomorphism of $S_p=S^1_\theta\times M$  preserving the 
$\theta$-coordinate and isotopic to the identity in the class of diffeomorphisms preserving 
the $\theta$-coordinate.
\end{theorem}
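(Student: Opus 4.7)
I will construct the conjugating diffeomorphism explicitly on the universal cover $S^1_\theta\times G$ and then verify that it descends to $S_p=S^1_\theta\times M$ and is isotopic to the identity. Using the coordinate change \eq{abcxyz} to lift system \eq{dlamn} to $G\cong\R^3_{a,b,c}$, it becomes $\dot\theta=p$, $\dot a=\cos\theta$, $\dot b=\sin\theta$, $\dot c=a\sin\theta$, while the $p$-standard flow \eq{stpflow} lifts to the same formula with $(A,B,C)$ in place of $(a',b',c')$. The first two equations integrate to $\theta$-periodic drifts, so $\tilde a:=a-\sin\theta/p$ and $\tilde b:=b+\cos\theta/p$ are first integrals; substituting into the third equation and using $2\sin^2\theta=1-\cos 2\theta$ shows that, after subtracting another explicit $\theta$-periodic term, the $c$-coordinate advances linearly with slope $1/(2p)$.

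\textbf{The conjugacy and its descent.} Motivated by the above, I will set
\[
\Phi(\theta,A,B,C)=\Bigl(\theta,\ A+\tfrac{\sin\theta}{p},\ B-\tfrac{\cos\theta}{p},\ C-\tfrac{A\cos\theta}{p}-\tfrac{\sin 2\theta}{4p^2}\Bigr).
\]
Its Jacobian is lower triangular with unit diagonal, so $\Phi$ is a diffeomorphism of $S^1_\theta\times G$. A direct pushforward of the $p$-standard field $p\partial_\theta+\tfrac{1}{2p}\partial_C$ under $\Phi$, using $\tfrac{1-\cos 2\theta}{2p}=\tfrac{\sin^2\theta}{p}$ and $A=a-\sin\theta/p$, yields exactly the lifted field of \eq{dlamn}. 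To descend to $S_p$, I recall that left multiplication by $(m,n,k)\in H$ acts on $G$ by $(a,b,c)\mapsto(a+m,b+n,c+k+mb)$; a short computation shows $\Phi(\theta,A+m,B+n,C+k+mB)=(\theta,a+m,b+n,c+k+mb)$ where $(a,b,c)=\Phi(\theta,A,B,C)$, the key cancellation being that the correction $-(A+m)\cos\theta/p$ combines with the $mB$ term to produce $m(B-\cos\theta/p)=mb$. Hence $\Phi$ is $H$-equivariant and descends to a $\theta$-preserving self-diffeomorphism of $S_p$.

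\textbf{Isotopy and the main subtlety.} For the isotopy I interpolate linearly in the correction terms:
\[
\Phi_s(\theta,A,B,C)=\Bigl(\theta,\ A+\tfrac{s\sin\theta}{p},\ B-\tfrac{s\cos\theta}{p},\ C-\tfrac{sA\cos\theta}{p}-\tfrac{s\sin 2\theta}{4p^2}\Bigr),\ s\in[0,1].
\]
Each $\Phi_s$ again has unit triangular Jacobian (hence is a diffeomorphism of the cover), and the same cancellation $-(A+m)s\cos\theta/p+smB=sm\,b$ shows $H$-equivariance for every $s$, giving an isotopy from $\Phi_0=\Id$ to $\Phi_1=\Phi$ inside the class of $\theta$-preserving diffeomorphisms of $S_p$. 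The computations are mechanical once the ansatz is guessed; the only delicate point, and the only place where the non-abelian nature of $G$ enters, is the matching between the Heisenberg correction $mb$ in the group law and the $-A\cos\theta/p$ term in the $C$-component of $\Phi$, which is what forces the precise shape of the third coordinate of $\Phi$ (all the other corrections are essentially dictated by averaging $\cos\theta$ and $\sin\theta$ to zero).
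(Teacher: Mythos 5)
Your proof is correct and follows essentially the same route as the paper: you construct an explicit $\theta$-preserving conjugacy on the cover $S^1_\theta\times G$, verify $H$-equivariance (with the key cancellation coming from the Heisenberg $mB$ term against the $-(A+m)\cos\theta/p$ correction), descend to $S_p$, and obtain the isotopy by linearly scaling the corrections, just as the paper does with its family $F_\nu$. The only difference from the paper's map is a harmless constant shift (the paper's $F_1$ uses $b-\tfrac{1-\cos\theta}{p}$ and correspondingly $\tfrac{A}{p}(\cos\theta-1)$, whereas you use $b+\tfrac{\cos\theta}{p}$ and $\tfrac{A\cos\theta}{p}$); both choices are $H$-equivariant conjugacies.
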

\begin{remark} \label{rkpul} Let $\pi:G\to M$ denote the quotient projection. Consider 
the pullbacks to $S^1_\theta\times G$ of vector fields (\ref{dlamn}), (\ref{stpflow}) 
under the induced projection $S^1\times G\to S^1\times M$. 
The pullbacks are written by the same formulas, as  (\ref{dlamn}), (\ref{stpflow}), but in the coordinates $(\theta,a,b,c)$.
\end{remark}

For the proof of Theorem \ref{tnflow}  we first solve equations (\ref{dlamn}) and show that their solutions are all $\frac{2\pi}p$-periodic, except for the function $c(t)$, which is 
equal to $\frac{t}{2p}$ plus a $\frac{2\pi}p$-periodic function. Using formulas for solutions,  we construct an explicit analytic family of diffeomorphisms  $F_\nu:S^1_\theta\times G\to 
S^1_\theta\times G$ 
preserving the $\theta$-coordinate, depending on the parameter $\nu\in[0,1]$,  $F_0=\Id,$ 
such that $F_1$ transforms the lifted vector field (\ref{dlamn}) to the lifted field (\ref{stpflow}), see the above remark. We show that each $F_\nu$ is $\Gamma$-equivariant and thus, the family $F_\nu$ induces a family of diffeomorphisms 
$S^1\times M\to S^1\times M$ with $F_1$ sending (\ref{dlamn}) to (\ref{stpflow}). 
This will prove Theorem \ref{tnflow}. 

Set 
$$\Sigma:=\{ H=\frac12\}\subset T^*M.$$

\begin{theorem} \label{tint} 1) The Hamiltonian flow on 
$T^*M^o:=T^*M\setminus\{ H=0\}$ with Hamiltonian function $H$ is integrable on the 
invariant domain $T^*M^{o,h_3}:=T^*M^o\setminus\{ h_3=0\}$: it has an additional integral $I(\lambda,a,b,c)$ analytic on $T^*M^{o,h_3}$ 
  that is in involution with 
the integrals $H$ and $h_3$ for the canonical symplectic structure on $T^*M$. 
The latter integral $I$ can be chosen to be any of the following functions:
\begin{equation} \cos(2\pi(a-\frac{\sin\theta}{h_3})), \ \sin(2\pi(a-\frac{\sin\theta}{h_3})), \  \cos(2\pi(b+\frac{\cos\theta}{h_3})), \  \sin(2\pi(b+\frac{\cos\theta}{h_3})).\label{ints}\end{equation}

2) For every integral $I$ from (\ref{ints}) and  $\nu\in(-1,1)$, $p\neq0$, the manifold    
$$S_{p,\nu}:=\{ H=\frac12\}\cap\{ h_3=p\}\cap\{ I=\nu\}=S_p\cap\{ I=\nu\}$$ 
is a transversal intersection, a disjoint union of two invariant 3-tori.

3) For $\nu=\pm1$ (i.e., when $\nu$ is an extremum of the integral $I$) the latter tori coincide and the above 
intersection is one 3-torus.

4) The restriction of the Hamiltonian flow to any of the two latter tori is conjugated to a constant vector field 
on the standard 3-torus $\rr^3\slash2\pi\zz^3$ with closed orbits for $p^2\in\pi\mathbb Q \setminus \{0\}$ 
and each orbit dense in a 2-torus for $p^2\notin\pi\mathbb Q$.

5) The  flow  restricted to the hypersurface $\Sigma=\{ H=\frac12\}$ has no non-trivial analytic integral: each analytic 
integral is a function of $h_3$. 
\end{theorem}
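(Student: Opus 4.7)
My plan is to verify the five parts sequentially, using the explicit integration of the flow on $T^*M^{o,h_3}$ together with Fourier analysis for the last part.

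\textbf{Parts 1--3 (Integrals and torus structure).} Reading $\sin\theta,\cos\theta$ in~\eqref{ints} as $h_2,h_1$ respectively, I will set $\alpha := a - h_2/h_3$ and $\beta := b + h_1/h_3$ on $T^*M^{o,h_3}$. Since the deck transformation $(m,n,k)$ sends $(a,b,c)\mapsto(a+m,b+n,c+mb+k)$ and fixes the cotangent components, the four functions in~\eqref{ints} are well-defined analytic functions on $T^*M^{o,h_3}$. The computation $\dot\alpha = h_1 - (h_1h_3)/h_3 = 0$, using~\eqref{dh1}, gives $\{I,H\}=0$; since the Hamiltonian flow of $h_3$ is the translation $c\mapsto c+t$ with all other coordinates fixed (because $X_3'=\partial/\partial c$), and $I$ is independent of $c$, also $\{I,h_3\}=0$. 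For Part~2 the differentials $dH,dh_3,dI$ are linearly independent on $\{\nu\ne\pm1\}$: only $dI$ carries a nonzero $da$-component, with coefficient proportional to $\sin(2\pi\alpha)$. The equation $\cos(2\pi\alpha)=\nu$ selects two residue classes $\alpha\equiv\pm\alpha_0\pmod{\mathbb Z}$ for $\nu\in(-1,1)$. On each branch I introduce a third auxiliary function
$$\tilde c := c + a h_1/h_3 - h_1 h_2/(2h_3^2).$$
A short computation tracking the action of $(m,n,k)$ yields $\tilde c \mapsto \tilde c + m\beta + k$; restricting to a single sheet forces $m=0$, so $(\beta,\tilde c)$ reduces mod $\mathbb Z^2$, identifying each branch with the standard 3-torus $S^1_\theta\times S^1_\beta\times S^1_{\tilde c}$. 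For $\nu=\pm1$ the two branches coincide, giving a single 3-torus at a non-transverse intersection.

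\textbf{Part 4 (Linear flow on the 3-torus).} A direct calculation, combining \eqref{dh1} with the identity $h_1^2+h_2^2=1$ on $\Sigma$, gives $\dot{\tilde c} = (h_1^2+h_2^2)/(2h_3) = 1/(2p)$. In coordinates $(\theta,\beta,\tilde c)$ the flow is therefore the constant vector field $(p,0,1/(2p))$ on the standard 3-torus. A closed orbit requires both $pT\in2\pi\mathbb Z$ and $T/(2p)\in\mathbb Z$ for some $T>0$, which is equivalent to $p^2\in\pi\mathbb Q$; in the complementary case Kronecker density provides orbits dense in the 2-torus $\{\beta=\mathrm{const}\}$.

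\textbf{Part 5 (No analytic integrals on $\Sigma$).} This is the main obstacle. For a real-analytic integral $J$ on $\Sigma$, Part~4 together with analyticity and the density of $\{p : p^2\notin\pi\mathbb Q\}$ forces $J$ to be constant on every invariant 2-torus for all $h_3\ne0$, hence $J=F(h_3,\alpha,\beta)$ on $\{h_3\ne 0\}$ with $F$ analytic and $1$-periodic in $\alpha,\beta$. Fourier expanding $F=\sum c_{m,n}(h_3)e^{2\pi i(m\alpha+n\beta)}$ and pulling back to $\Sigma$ yields
$$J=\sum_{m,n}c_{m,n}(h_3)\,e^{2\pi i(ma+nb)}\exp\!\left(\frac{2\pi i(-m\sin\theta+n\cos\theta)}{h_3}\right).$$
Now fix any point of $\Sigma$ with $h_3=0$ and choose $\theta_0$ with $\tan\theta_0$ irrational and $\cos\theta_0\ne 0$, so that $C_{m,n}:=2\pi(-m\sin\theta_0+n\cos\theta_0)\ne 0$ for every $(m,n)\ne(0,0)$. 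For each such mode the factor $\exp(iC_{m,n}/h_3)$ has an essential singularity at $h_3=0$ along the real $h_3$-axis (oscillations with unbounded derivatives), incompatible with the real-analytic germ of $J$ at that point. Hence $c_{m,n}\equiv 0$ for $(m,n)\ne(0,0)$, giving $J=c_{0,0}(h_3)=f(h_3)$ as required.
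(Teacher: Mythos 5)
Your argument is correct, and for parts 1--4 it follows essentially the paper's route, although you go further than the paper by constructing the explicit third angle coordinate $\tilde c = c + ah_1/h_3 - h_1h_2/(2h_3^2)$ and writing the flow as the constant field $(p,0,1/(2p))$ on $(\theta,\beta,\tilde c)$; the paper instead invokes Arnold--Liouville for the torus structure. The computation $\dot{\tilde c}=1/(2p)$ and the deck-action formula $\tilde c\mapsto\tilde c+m\beta+k$ both check out, and your reading of $\sin\theta,\cos\theta$ in \eqref{ints} as $h_2,h_1$ is the correct interpretation needed for the integral to be conserved on all of $T^*M^{o,h_3}$ and not merely on $\Sigma$.

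Part 5 is where you take a genuinely different route. The paper uses a dynamical argument: it shows that minimum sets $O_p$ of $I|_{S_p}$ accumulate on $S_0$ as $p\to 0$ (via density of geodesics-lines), concludes that $dI$ has degenerate behaviour near $S_0$, and then applies complex-analytic set theory (codimension of the zero locus of an analytic function vanishing on $S_0$) to reach a contradiction. You instead reduce to a function $F(h_3,\alpha,\beta)$, Fourier-expand, and argue that the mode factor $e^{iC_{m,n}(\theta)/h_3}$ cannot patch to an analytic germ across $h_3=0$. This is cleaner and avoids the analytic-set machinery. However, your final step is stated somewhat loosely (``oscillations with unbounded derivatives, incompatible with the real-analytic germ''), and, as written, does not rule out cancellations between the different modes. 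To close the gap, observe that the Fourier coefficient $\hat J_{m,n}(\theta,h_3)=c_{m,n}(h_3)e^{iC_{m,n}(\theta)/h_3}$ (obtained by integrating $J$ against $e^{-2\pi i(ma+nb)}$ over $(a,b)\in\T^2$) is analytic jointly in $(\theta,h_3)$, including at $h_3=0$, and satisfies the identity $h_3\,\partial_\theta\hat J_{m,n}=iC_{m,n}'(\theta)\,\hat J_{m,n}$ for $h_3\ne0$, hence for all $h_3$ by continuity. Since $C_{m,n}'\not\equiv 0$ for $(m,n)\ne(0,0)$, evaluating at $h_3=0$ gives $\hat J_{m,n}(\theta,0)=0$; differentiating the identity $k$ times in $h_3$ and evaluating at $h_3=0$ shows inductively that all $h_3$-derivatives of $\hat J_{m,n}$ vanish there, so $\hat J_{m,n}\equiv 0$ by analyticity. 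With this patch your proof of part 5 is complete and arguably more elementary than the paper's.
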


Theorems \ref{tnflow}, \ref{tint} are proved in Subsections 7.1 and 7.2 respectively.

\subsection{Conjugacy with $p$-standard flow. Proof of Theorem \ref{tnflow}}

\begin{proposition} Each solution of the differential equation defined by the lifted vector field (\ref{dlamn}) (thus, written in the coordinates $(\theta,a,b,c)$) with initial condition 
$(\theta_0,a_0, b_0, c_0)$ at $t=0$ takes the form 
\begin{equation} \begin{cases}\theta(t)=\theta_0+pt\\
a(t)=\frac1p(\sin(\theta_0+pt)-\sin\theta_0)+a_0\\
b(t)=\frac1p(\cos\theta_0-\cos(\theta_0+pt))+b_0\\
c(t)=\frac t{2p}-\frac1{4p^2}(\sin2(\theta_0+pt)+\sin2\theta_0)+
\frac1{2p^2}(\sin(2\theta_0+pt)-\sin pt)\\ -\frac{a_0}p(\cos(\theta_0+pt)
-\cos\theta_0)+c_0.\end{cases}\label{eqsol}\end{equation}
\end{proposition}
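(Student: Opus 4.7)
The plan is to integrate the lifted system
\[
\dot\theta=p,\qquad \dot a=\cos\theta,\qquad \dot b=\sin\theta,\qquad \dot c=a\sin\theta
\]
by quadrature, exploiting its triangular structure: the right-hand side of the $k$-th equation depends only on the first $k$ variables, so each line can be solved using the previously obtained formulas. The $\theta$-equation integrates immediately to $\theta(t)=\theta_0+pt$. Substituting this expression into $\dot a=\cos\theta$ and $\dot b=\sin\theta$ reduces both to elementary trigonometric integrals, yielding the stated formulas for $a(t)$ and $b(t)$.

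The substance of the argument is the integration of the $c$-equation. Substituting the just-obtained formula for $a(s)$ into $\dot c = a(s)\sin(\theta_0+ps)$ gives three terms
\[
a_0\sin(\theta_0+ps)+\tfrac{1}{p}\sin^2(\theta_0+ps)-\tfrac{\sin\theta_0}{p}\sin(\theta_0+ps).
\]
The first term integrates directly to $-\tfrac{a_0}{p}(\cos(\theta_0+pt)-\cos\theta_0)$. For the second term I would use $\sin^2(\cdot)=\tfrac12(1-\cos(2\cdot))$ to split off the linear drift $\tfrac{t}{2p}$ and an elementary integral producing $-\tfrac{1}{4p^2}(\sin 2(\theta_0+pt)-\sin 2\theta_0)$. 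The third term, after integrating $\sin$, contains products $\sin\theta_0\cos\theta_0$ and $\sin\theta_0\cos(\theta_0+pt)$, which I would transform via the product-to-sum identities
\[
\sin\theta_0\cos\theta_0=\tfrac12\sin 2\theta_0,\qquad \sin\theta_0\cos(\theta_0+pt)=\tfrac12\bigl(\sin(2\theta_0+pt)-\sin pt\bigr),
\]
generating a contribution $-\tfrac{1}{2p^2}\sin 2\theta_0+\tfrac{1}{2p^2}(\sin(2\theta_0+pt)-\sin pt)$.

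Finally I would collect the two $\sin 2\theta_0$-contributions, $+\tfrac{1}{4p^2}\sin 2\theta_0$ (from the $\sin^2$ expansion) and $-\tfrac{1}{2p^2}\sin 2\theta_0$ (from the cross term), into the single coefficient $-\tfrac{1}{4p^2}\sin 2\theta_0$ appearing in the target formula, so that the two $\sin$-terms at argument $2(\theta_0+pt)$ and $2\theta_0$ merge into the symmetric combination $-\tfrac{1}{4p^2}(\sin 2(\theta_0+pt)+\sin 2\theta_0)$. As a final sanity check I would verify that at $t=0$ the formula reduces to $c_0$, which it does because $-\tfrac{1}{4p^2}(2\sin 2\theta_0)+\tfrac{1}{2p^2}\sin 2\theta_0=0$.

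This is a routine quadrature; the only real obstacle is bookkeeping — in particular keeping track of the asymmetric mixture of frequencies $2p$, $p$ and the non-obvious argument $2\theta_0+pt$ (rather than $2\theta_0+2pt$) that arises from the cross term, and ensuring the sign of the $\sin 2\theta_0$ contributions combines correctly to match the stated expression.
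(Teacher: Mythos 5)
Your proposal is correct and follows essentially the same route as the paper: solve the triangular system by successive quadrature, with the only work being the integral for $c$, handled via $\sin^2$-reduction and product-to-sum identities. The paper applies the product-to-sum formula to $\sin\theta_0\sin(\theta_0+ps)$ before integrating, while you integrate first and then apply it to $\sin\theta_0\cos(\theta_0+pt)$; this is a cosmetic reordering, and your bookkeeping of the $\sin 2\theta_0$ contributions and the $t=0$ check are accurate.
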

\begin{proof} The three first equations in (\ref{dlamn}) are solved by direct integration. 
The fourth equation is solved by taking the primitive:
$$c(t)=c_0+\int_0^ta(t)\sin\theta(t)dt$$
\begin{equation}=c_0+\int_0^t(\frac1p(\sin(\theta_0+pt)-\sin\theta_0)+a_0)\sin(\theta_0+pt)dt.\label{c't}\end{equation}
The latter subintegral expression is equal to 
$$\frac1{2p}(1-\cos2(\theta_0+pt)+\cos(2\theta_0+pt)-\cos pt)+a_0\sin(\theta_0+pt).$$
Therefore, the integral is equal to 
$$\frac t{2p}-\frac1{4p^2}(\sin2(\theta_0+pt)-\sin2\theta_0+2\sin2\theta_0-2\sin(2\theta_0+pt)+2\sin pt)$$
$$+
\frac{a_0}p(\cos\theta_0-\cos(\theta_0+pt)).$$
This together with (\ref{c't}) yields (\ref{eqsol}).
\end{proof}

\begin{proposition} The phase curves of the lifted vector field (\ref{dlamn}) are 
graphs of vector functions $(a(\theta),b(\theta),c(\theta))$, where 
\begin{equation} \begin{cases}a(\theta)=\frac{\sin\theta}p+a_0\\
b(\theta)=\frac1p(1-\cos\theta)+b_0\\
c(\theta)=\frac\theta{2p^2}-\frac1{4p^2}\sin2\theta-\frac{a_0}p(\cos\theta
-1)+c_0,\end{cases}\label{eqsol2}\end{equation}
$$a_0=a(0), b_0=b(0), c_0=c(0).$$
\end{proposition}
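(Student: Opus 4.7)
The plan is to reparameterize the phase curves by $\theta$ rather than by $t$. On the invariant level surface $S_p$, the first equation of the lifted system \eq{dlamn} reads $\dot\theta = p$, which is a nonzero constant (the case $p = 0$ being tacitly excluded, since otherwise the claim that phase curves are graphs over $\theta$ is vacuous). Hence $\theta(t)$ is strictly monotone and invertible, so each phase curve is the graph of a vector function of $\theta$, and we may shift the time origin so that the initial condition $(a_0, b_0, c_0)$ occurs at $\theta = 0$.

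Dividing the remaining three equations of \eq{dlamn} by $\dot\theta = p$, I would obtain the reduced autonomous system
\begin{align*}
\frac{da}{d\theta} = \frac{\cos\theta}{p}, \qquad \frac{db}{d\theta} = \frac{\sin\theta}{p}, \qquad \frac{dc}{d\theta} = \frac{a(\theta)\sin\theta}{p}.
\end{align*}
The first two equations integrate directly with initial data $a(0) = a_0$, $b(0) = b_0$ and yield the stated formulas for $a(\theta)$ and $b(\theta)$.

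For $c(\theta)$, the next step is to substitute the expression just obtained for $a(\theta)$ into the third equation, producing $\frac{dc}{d\theta} = \frac{a_0 \sin\theta}{p} + \frac{\sin^2\theta}{p^2}$. The integration is then elementary: the first term contributes $\frac{a_0}{p}(1 - \cos\theta)$, and the identity $\sin^2\theta = \tfrac{1}{2}(1 - \cos 2\theta)$ turns the second term into $\frac{\theta}{2p^2} - \frac{\sin 2\theta}{4p^2}$. Adding these contributions to $c_0$ gives the claimed expression.

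There is no genuine obstacle in this argument; it all reduces to elementary integration. The only point requiring care is the arithmetic of the trigonometric primitives for $c(\theta)$ and a consistency check that $c(0) = c_0$ in the assembled formula. Alternatively, the proposition may be read off as a corollary of the previous one by setting $\theta_0 = 0$ in \eq{eqsol} and substituting $t = \theta/p$; the two routes amount to the same computation, but the reduced-ODE viewpoint is cleaner because it avoids the bookkeeping of the sum-angle terms $\sin 2(\theta_0 + pt)$ that appear in the $t$-parameterization.
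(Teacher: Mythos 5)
Your proof is correct, and it takes a route that differs from the paper's in a small but genuine way. The paper proves this proposition as an immediate corollary of the preceding one: it observes that, since $\dot\theta = p \neq 0$, every phase curve meets the fiber $\{\theta = 0\}$, so it must be the graph of a solution of the form \eqref{eqsol} with $\theta_0 = 0$; substituting $\theta_0 = 0$ and $\theta = pt$ into \eqref{eqsol} then collapses the various $\sin(2\theta_0 + pt)$-type terms to the compact expression \eqref{eqsol2}. You instead reparameterize the system directly by $\theta$, dividing the last three equations of \eqref{dlamn} by $\dot\theta = p$ to obtain $\frac{da}{d\theta} = \frac{\cos\theta}{p}$, $\frac{db}{d\theta} = \frac{\sin\theta}{p}$, $\frac{dc}{d\theta} = \frac{a(\theta)\sin\theta}{p}$, and integrate with initial data at $\theta = 0$. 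Both arguments are elementary and both hinge on the same observation (that $\theta$ can be used as the time variable because $p \neq 0$), so the choice is purely one of economy: the paper reuses the already-computed $t$-parameterized solution, whereas your route re-derives from scratch but avoids the sum-angle bookkeeping, as you yourself note. Either way the integration is straightforward and the result is the same; there is no gap.
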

\begin{proof} Each phase curve intersects the fiber $\{\theta=0\}$. Hence, it is 
the graph of a solution (\ref{eqsol}) with $\theta_0=0$.
Substituting $\theta_0=0$ to (\ref{eqsol}) yields 
$$ \begin{cases}\theta(t)=pt\\
a(t)=\frac{\sin\theta}p+a_0\\
b(t)=\frac1p(1-\cos\theta)+b_0\\
c(t)=\frac t{2p}-\frac1{4p^2}\sin2\theta-\frac{a_0}p(\cos\theta
-1)+c_0,\end{cases}$$
which implies (\ref{eqsol2}).
\end{proof}

Consider the following family of diffeomorphisms  $F_\nu:S^1_\theta\times G\to 
S^1_\theta\times G$:
$$ F_\nu(\theta,a,b,c)=(\theta,\ a-\frac{\nu}p\sin\theta, \ 
b-\frac{\nu}p(1-\cos\theta), \ \wt c_\nu),$$
\begin{equation}\wt c_\nu:=
c+\nu\left(\frac1{4p^2}\sin2\theta+\frac1p(a-\frac{\sin\theta}p)(\cos\theta
-1)\right), \ \nu\in[0,1].\label{fnu}\end{equation}
The action of the group $\Gamma$ on $G$ by left multiplication lifts to its action on 
$S^1_\theta\times G$: 
$$\gamma(\theta,g):=(\theta,\gamma g) \ \text{ for every } \ \gamma\in\Gamma, \ g\in G.$$
\begin{proposition} \label{pequiv}
 1) For every $\nu\in\rr$ the map $F_\nu$ is a diffeomorphism  equivariant under left multiplications by elements of the group 
$\Gamma$: 
\begin{equation}F_\nu(\theta,\gamma g)=\gamma F_\nu(\theta, g) \text{ for every } \gamma\in\Gamma, \ g\in G.
\label{equivg}\end{equation}
2) One has $F_0=id$, and $F_1$ transforms the lifting to $S^1\times G$ of flow 
(\ref{dlamn}) to the lifting of the $p$-standard flow (\ref{stpflow}). 
\end{proposition}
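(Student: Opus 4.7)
The plan is to verify both assertions by direct coordinate computation, since the map $F_\nu$ is given by explicit formulas and both the $\Gamma$-action and the two vector fields are explicit.

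For part 1), I would first write out the left-action of an element $\gamma\in H\simeq\Gamma$ on $G$: for $\gamma$ corresponding to $(m,n,k)\in\zz^3$ and $g$ corresponding to $(a,b,c)\in\rr^3$, the matrix product gives $\gamma g = (a+m,\ b+n,\ c+mb+k)$. Then I would substitute this into the formula for $F_\nu$ and compare with the action of $\gamma$ applied to $F_\nu(\theta,g)$. The first two shifted coordinates match trivially since the shifts by $m$ and $n$ commute with subtracting $\frac{\nu}{p}\sin\theta$ and $\frac{\nu}{p}(1-\cos\theta)$ respectively. The $c$-coordinate is the delicate one: the $\gamma$-action on $F_\nu(\theta,g)$ adds $m\bigl(b-\frac{\nu}{p}(1-\cos\theta)\bigr)+k$ to the existing $\tilde c_\nu$, while $F_\nu(\theta,\gamma g)$ evaluates $\tilde c_\nu$ at the shifted argument $(a+m,b+n,c+mb+k)$, producing an extra $mb+k$ from the $c$-term and an extra $\frac{\nu m}{p}(\cos\theta-1)$ from the $a\mapsto a+m$ shift inside $\tilde c_\nu$. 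Both sides thus contribute $mb+k+\frac{\nu m}{p}(\cos\theta-1)$ beyond $\tilde c_\nu$, which is the required equality \eqref{equivg}.

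For part 2), the equality $F_0=\mathrm{id}$ is obvious from \eqref{fnu}. That each $F_\nu$ is a diffeomorphism follows from the triangular block structure of its Jacobian in the coordinates $(\theta,a,b,c)$: the $\theta$-row is trivial, $A$ depends only on $(\theta,a)$, $B$ only on $(\theta,b)$, and $C$ depends on all variables but has $\partial C/\partial c = 1$, so the Jacobian determinant equals $1$. For the conjugation of flows, I would compute the total derivative of $F_\nu\bigl(\theta(t),a(t),b(t),c(t)\bigr)$ along a solution of the lifted field \eqref{dlamn}, using $\dot\theta=p$, $\dot a=\cos\theta$, $\dot b=\sin\theta$, $\dot c=a\sin\theta$. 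A short computation gives $\dot A=(1-\nu)\cos\theta$ and $\dot B=(1-\nu)\sin\theta$, which vanish at $\nu=1$. The $C$-component requires applying the chain rule to three terms; the cross-term $(\dot a-\frac{\cos\theta\cdot\dot\theta}{p})(\cos\theta-1)$ vanishes identically because $\dot a=\cos\theta$ and $\dot\theta=p$, leaving
\[
\dot C \;=\; a\sin\theta + \nu\!\left(\tfrac{\cos 2\theta}{2p} - \sin\theta\bigl(a-\tfrac{\sin\theta}{p}\bigr)\right) \;=\; (1-\nu)\,a\sin\theta + \nu\!\left(\tfrac{\cos 2\theta}{2p}+\tfrac{\sin^{2}\theta}{p}\right).
\]
The trigonometric identity $\cos 2\theta = 1-2\sin^{2}\theta$ then collapses the bracket to $\frac{1}{2p}$, so at $\nu=1$ we obtain $\dot\theta=p$, $\dot A=\dot B=0$, $\dot C=\frac{1}{2p}$, which is precisely the lifted $p$-standard flow \eqref{stpflow}. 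This proves that $F_1$ conjugates the two fields on $S^1_\theta\times G$.

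The only real obstacle is the bookkeeping in the $c$-equation: one must carefully keep track of where the $\sin\theta/p$ and $\cos\theta-1$ corrections appear, and notice the two cancellations (the $a\sin\theta$ terms cancel against each other at $\nu=1$, and the $\sin^{2}\theta/p$ terms combine with $\cos 2\theta/(2p)$ to give $1/(2p)$). Both cancellations are what motivated the precise form of $\tilde c_\nu$ in \eqref{fnu}; they also explain the otherwise mysterious combination $a-\sin\theta/p$ inside the formula, which is exactly the initial value $a_0$ read off from the solution \eqref{eqsol2}. Once these computations are laid out, the two statements of the proposition follow at once, and the $\Gamma$-equivariance allows the family $F_\nu$ to descend to $S^1_\theta\times M$, preparing the ground for the proof of Theorem \ref{tnflow}.
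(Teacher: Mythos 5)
Your proof is correct, and the computations check out. Let me verify the key ones. For part 1), with $\gamma=(m,n,k)\in H$ the product gives $\gamma g=(a+m,\,b+n,\,c+mb+k)$; substituting into $\tilde c_\nu$ gives the extra term $mb+k+\frac{\nu m}{p}(\cos\theta-1)$, and applying $\gamma$ to $F_\nu(\theta,g)$ adds $m\bigl(b-\frac{\nu}{p}(1-\cos\theta)\bigr)+k$ to $\tilde c_\nu$, which is the same quantity. For part 2), $\dot A=(1-\nu)\cos\theta$, $\dot B=(1-\nu)\sin\theta$, and $\dot C=(1-\nu)a\sin\theta+\frac{\nu}{2p}(\cos 2\theta+2\sin^2\theta)=(1-\nu)a\sin\theta+\frac{\nu}{2p}$, so at $\nu=1$ the pushforward is indeed the lifted $p$-standard field.

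The paper takes a slightly different route at both steps, though the substance is the same. For part 1), the paper reduces to verifying equivariance only on the two generators $\mca=(1,0,0)$ and $\mcb=(0,1,0)$ of $\Gamma$ (equivariance then propagates to the whole group by composition); you verify it directly for arbitrary $(m,n,k)$, which is hardly longer and makes the propagation step unnecessary. For part 2), the paper observes that $F_1$ sends the explicitly parametrized phase curves \eqref{eqsol2} to the straight-line phase curves of \eqref{stpflow} (indeed $F_1$ maps the point at parameter $\theta$ on the curve with initial data $(a_0,b_0,c_0)$ to $(\theta,a_0,b_0,\theta/(2p^2)+c_0)$), whereas you differentiate $F_\nu$ along solutions and compute the pushforward field directly. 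Both are equivalent ways to state that a diffeomorphism conjugates flows, and your version has the advantage of tracking how the interpolation parameter $\nu$ enters at each step, making visible exactly which terms cancel and why the specific form of $\tilde c_\nu$ is forced.
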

\begin{proof} Statement 2) follows from (\ref{fnu}) and (\ref{eqsol2}). Let us prove 
Statement 1). The group $\Gamma$ has two generators:
$$\mca:=\left(\begin{matrix} 1 & 1 & 0\\ 0 & 1 & 0\\ 0 & 0 & 1\end{matrix}\right), 
\ \ \mcb:=\left(\begin{matrix} 1 & 0 & 0\\ 0 & 1 & 1\\ 0 & 0 & 1\end{matrix}\right).$$
Let us represent each $g\in G$ by its coordinates $(a,b,c)$. The multiplication by 
$\mca$ from the left acting on $G$ lifts to the action on $S^1_\theta\times G$ 
 by the formula $(\theta,a,b,c)\mapsto(\theta,a+1,b,c+b)$. Therefore,
$$\mca F_\nu(\theta,a,b,c)=(\theta, a+1-\frac{\nu}p\sin\theta, b-\frac{\nu}p(1-\cos\theta), \wt c_{\nu,1}),$$
\begin{equation}\wt c_{\nu,1}:=c+\nu\left(\frac1{4p^2}\sin2\theta+\frac1p(a-\frac{\sin\theta}p)(\cos\theta
-1)\right)+b+\frac{\nu}p(\cos\theta-1),\label{cnu1}\end{equation}
$$F_\nu\circ\mca(\theta,a,b,c)=F_\nu(\theta,a+1,b,c+b)=(\theta, a+1-\frac{\nu}p\sin\theta, b-\frac{\nu}p(1-\cos\theta), \wt c_{\nu,2}),$$
$$\wt c_{\nu,2}:=c+\nu\left(\frac1{4p^2}\sin2\theta+\frac1p(a+1-\frac{\sin\theta}p)(\cos\theta
-1)\right)+b.$$
This together with (\ref{cnu1}) implies that $c_{\nu,1}=c_{\nu,2}$ and proves (\ref{equivg}) 
for $\gamma=\mca$. Statement (\ref{equivg}) for $\gamma=\mcb$ follows from 
(\ref{fnu}) and the relation $\mcb(\theta,a,b,c)=(\theta,a,b+1,c)$. Proposition \ref{pequiv} 
is proved.
\end{proof}

The quotient of the diffeomorphism $F_1$ under the projection 
$\pi:S^1_\theta\times G\to S^1_\theta\times M$ is a well-defined diffeomorphism of the 
manifold $S_p=S^1_\theta\times M$ preserving the coordinate $\theta$ and 
isotopic to the identity in the class of diffeomorphisms preserving the coordinate $\theta$. 
It transforms flow (\ref{dlamn}) to (\ref{stpflow}) by construction. This proves 
Theorem \ref{tnflow}.

\subsection{Integrability. Proof of Theorem \ref{tint}}
Let us prove Statement 1) of Theorem \ref{tint}. 
The function $f=a-\frac{\sin\theta}{h_3}$ is defined on the cotangent bundle $T^*G$ to the group $G$ 
with the hypersurface $\{ h_3=0\}$ deleted. It 
  is a first integral of flow (\ref{dlamn}) 
lifted to $T^*G$. This follows immediately from the first and second equation in (\ref{dlamn}), since 
$h_3$ is a first integral. Similarly the function $g=b+\frac{\cos\theta}{h_3}$ is an integral. Each one of the two latter functions is automatically in involution with the Hamiltonian $H$, being an integral. Let us show that $f$ 
is in involution with $h_3$, i.e., 
\begin{equation}\omega(\vec f, \vec h_3)=0, \ \omega \ \text{ is the standard symplectic form on } \ 
T^*G.\label{ininv}\end{equation}
Here by $\vec \psi$ we denote the skew gradient of a function $\psi$, which means by definition that $\omega(\vec\psi, v)=(d\psi)(v)$ 
for every $v\in TM$. Indeed, consider the coordinates $(x,y,z)$ on $G$ and the associated coordinates $(x,y,z,\la_1,\la_2,\la_3)$ on $T^*G$: $\la_1$, $\la_2$, $\la_3$ 
is the basis in $T^*_{(x,y,z)}G$  dual to the basis $\frac{\partial}{\partial x}$, $\frac{\partial}{\partial y}$,  $\frac{\partial}{\partial z}$ in $T_{(x,y,z)}G$. The standard symplectic form on $T^*G$ is 
$$\omega=dx\wedge d\la_1+ dy\wedge d\la_2+dz\wedge d\la_3.$$
One has 
\begin{equation}\vec h_3=\vec\la_3=(0, 0, 1, 0, 0, 0).\label{nabh}\end{equation}
To calculate $\vec f$, recall that 
$$h_1=\langle\la, X_1\rangle=\la_1-\frac y2\la_3, \ \  h_2=\langle\la, X_2\rangle=\la_2+\frac x2\la_3, \ \ 
\theta=\arctan\frac{h_2}{h_1},$$
$$f=x-\frac{\sin\theta}{\la_3}.$$
The sixth, that is, $\la_3$-component of the skew gradient $\vec f$ is zero, since $f$ is $z$-independent. Therefore, $\omega$ vanishes on the pair of vectors $\vec h_3$, $\vec f$, 
by (\ref{nabh}). This proves (\ref{ininv}). Analogously, the function $g$ is in involution with $h_3$. 
This implies that each one of integrals in (\ref{ints}) is in involution with $h_3$ on $T^*M^o$ and proves Statement 1).

Let us prove Statement 2).  The submanifold $S_p=\{H=\frac12 , \ h_3=p\}=S^1\times M\subset T^*M$ 
lifts to $T^*G$ as a submanifold $\wt S_p=S^1\times G$ covering $S_p$ via the canonical projection $\wt S_p\to S_p$ induced by the quotient projection $G\to M$. 
The function 
$f=a-\frac{\sin\theta}{h_3}=a-\frac{\sin\theta}p$ is well-defined on $\wt S_p$ for $p\neq0$. 
The surface $\wt S_p$  has natural coordinates $(\theta, a, b, c)$. The function $f|_{\wt S_p}$ 
 has nowhere vanishing differential, since it has unit partial derivative in $a$.  Therefore,  the function $I=\cos(2\pi(a-\frac{\sin\theta}{h_3}))$ from (\ref{ints}) restricted to
 $S_p\subset T^*M^o$ also has nowhere vanishing differential, except for the points where the $|\cos|$ 
 takes its maximal value 1. Hence, its level hypersurfaces $\{ I=\nu\}$ with $\nu\in(-1,1)$ are transversal to 
 $S_p$. Writing 
 $$\nu=\cos2\pi\alpha, \ \ \alpha\in(0,\frac12),$$
 we get that 
 \begin{equation}\{ I=\nu\}\cap S_p=\cup_{\pm}\{(\theta,a,b,c) \ | \ a=\frac{\sin\theta}p\pm\alpha(\mo\zz)\}.\label{a=a}\end{equation}
 The latter intersection is a union of two compact invariant 3-manifolds, each being the subset in the right-hand side with a given sign choice $\pm$. They are tori: this follows from  Arnold--Liouville Theorem  on integrable systems  \cite[chapter 10, section 49]{arnmet} and can be also proved directly.  This together with analogous statements for the other integrals from (\ref{ints}) (proved similarly) 
 proves Statement 2). 
 
 As $\nu=\pm1$, one has $\alpha\in\{0,\frac12\}$, and the two 3-tori in the union 
 in (\ref{a=a})  coincide, since in this case $\alpha\equiv-\alpha(\mo\zz)$. 
 This proves Statement 3). 
 
 Statement 4) follows from Theorem \ref{tspirals} and Arnold--Liouville Theorem. 
 
In the proof of  Statement 5) we use the following obvious corollaries of Theorem \ref{tgdense} on density of 
orbits in $S_0=\Sigma\cap\{ h_3=0\}$. To state them, let us introduce the following notations. For every $\theta_0\in\rr$, $p\in\rr$,  set 
$$M_{\theta_0,p}:=\Sigma\cap\{\theta=\theta_0\}\cap\{ h_3=p\} =\{\theta_0\}\times M\subset S_p=S^1\times M.$$
Recall that for every $\theta_0$ the fiber $M_{\theta_0,0}$ is an invariant manifold for the flow, since $\dot\theta=0$. 
\begin{proposition} 1) For every $\theta$ with $\tan\theta\notin\mathbb Q\cup\{\infty\}$ the 
 flow on $M_{\theta,0}$ is miminal: each orbit is dense. 
 
 2) For every $\theta$ as above and $\var>0$ there exists a $T=T_{\var,\theta}>0$ 
such that each 
 finite orbit of the flow on $M_{\theta,0}$ in times $t\in[0,T]$ is $\frac{\var}4$-dense in $M_{\theta,0}$: 
 this means that it intersects the $\frac\var4$-neighborhood of each point in $M_{\theta,0}$. 
\end{proposition}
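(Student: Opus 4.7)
The plan is to deduce both statements directly from earlier results of the paper; the author has in fact labelled them ``obvious corollaries of Theorem \ref{tgdense}''.

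\emph{Statement 1).} I would repeat the argument proving Theorem \ref{tgdense} essentially verbatim, but with the base point $\pi(\Id)$ replaced by an arbitrary $m_0=\pi(a_0,b_0,c_0)\in M_{\theta,0}$. The forward orbit of the flow on $M_{\theta,0}$ through $m_0$ coincides with the projection to $M$ of the geodesic-line issued from $(a_0,b_0,c_0)$ with direction $\theta$, computed via the Heisenberg group product. Writing this geodesic explicitly in coordinates and decomposing the forward orbit as in (\ref{gu}) into pieces $\wh\Gamma_q$ of $t$-length $1/\cos\theta$, one obtains a formula analogous to (\ref{gqc}) but with extra additive constants coming from $(a_0,b_0,c_0)$. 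The density question then reduces to density, for every fixed $t\in[0,1/\cos\theta)$, of a sequence of the form $(\{x_0+2qr\},\{y_0-rq^2+q z_0\})$ in $[0,1)^2$, where $x_0,y_0,z_0$ are affine expressions in the initial data. Composing with the sign-change symmetry $(x,y)\mapsto(x,-y)$ as in the proof of Theorem \ref{tgdense}, this is a translate of a forward orbit of the torus map $T$ from (\ref{tab}), whose density from every starting point is supplied by the \emph{full} strength of Theorem \ref{tmin} (minimality), rather than only by Theorem \ref{conj} (density of the single orbit through the origin). This yields density of the forward orbit of $m_0$ in $M_{\theta,0}=M$.

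\emph{Statement 2).} This is the standard upgrade from pointwise minimality to uniform minimality on a compact phase space. Denote the flow on $M_{\theta,0}$ by $\Phi^t$ and fix $\var>0$. By Statement 1), for each $m\in M_{\theta,0}$ there is a time $T_m>0$ such that $\{\Phi^t(m):t\in[0,T_m]\}$ is $\frac{\var}{8}$-dense in $M_{\theta,0}$. Continuous dependence of $\Phi^t$ on the initial condition, uniform on the compact interval $[0,T_m]$, gives an open neighborhood $U_m\ni m$ with
\[
d(\Phi^t(m'),\Phi^t(m))<\tfrac{\var}{8}\quad\text{for all }m'\in U_m,\ t\in[0,T_m],
\]
so every orbit segment of length $T_m$ starting in $U_m$ is $\frac{\var}{4}$-dense. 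Compactness of $M_{\theta,0}$ yields a finite subcover $\{U_{m_i}\}_{i=1}^N$; setting $T:=\max_i T_{m_i}$ is the required constant, since any orbit segment of length $T$ contains one of length $T_{m_i}$ for the $i$ whose neighborhood contains its starting point.

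\emph{Main obstacle.} The only real content is in Statement 1): one must verify that the bookkeeping in the proof of Theorem \ref{tgdense} --- in particular the analysis of $[a(t)]$ and the integer-part term in $c'(t)$ modulo $1$ --- is preserved under translation of the base point, so that the density question genuinely reduces to density of a translated orbit of the torus map $T$. Once this arithmetic check is carried out, Theorem \ref{tmin} supplies density from arbitrary starting points, and Statement 2) is then a routine equicontinuity and compactness argument with no essential novelty.
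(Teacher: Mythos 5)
Your proposal is correct and in essence carries out what the paper dismisses in a single sentence (``Theorem \ref{tgdense} immediately implies density, which in its turn together with compactness implies Statement 2)''). You are right to notice that Theorem \ref{tgdense} as stated only concerns the orbit through $\pi(\Id)$, so the passage to an arbitrary base point on $M_{\theta,0}$ does require re-running the bookkeeping of its proof; your reduction to density of a \emph{translate} of a $T$-orbit (absorbing the extra linear-in-$q$ term into the first coordinate of the starting point of $T$, which is exactly where that coefficient appears in $T^q(x_0,y_0)$) is the correct way to do this, and it indeed uses the full minimality content of Theorem \ref{tmin} rather than merely the density of the single orbit through the origin (Theorem \ref{conj}). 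Your equicontinuity--compactness upgrade for Statement 2) is the standard argument and is what the paper's ``together with compactness'' refers to.
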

\begin{proof} Theorem \ref{tgdense} immediately implies density, which in its turn together with compactness 
 implies Statement 2).
\end{proof}
\begin{proposition} \label{propp} For every $\var>0$ there exists a $\delta>0$ such that for every $p\in(0,\delta)$ 
each orbit of the restriction of the flow to $S_p$ is $\var$-dense in $S_p$. 
\end{proposition}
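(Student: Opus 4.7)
The plan is to deduce Proposition~\ref{propp} from the quantitative density of geodesic-lines stated in the preceding proposition via a ``slow--fast'' tracking argument. Since $\dot\theta=p$ is small, on any time window of length $T\ll 1/p$ the orbit of (\ref{dlamn}) closely shadows the frozen geodesic-line flow at a fixed angle $\theta_j$ with $\tan\theta_j$ irrational, and the latter is $\varepsilon/4$-dense in $M_{\theta_j,0}$ after a uniform time $T_{\varepsilon/4,\theta_j}$. Discretizing $S^1_\theta$ by a finite $\varepsilon/4$-net of such good angles then yields $\varepsilon$-density in $S_p$.

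\textbf{Setup and tracking estimate.} First I would choose finitely many angles $\theta_1,\dots,\theta_N\in S^1_\theta$ with $\tan\theta_j\notin\mathbb Q\cup\{\infty\}$ forming an $\varepsilon/4$-net (such angles are dense) and let $T:=\max_j T_{\varepsilon/4,\theta_j}$, so that in time $T$ every orbit of the frozen flow at $\theta_j$ is $\varepsilon/4$-dense in $M_{\theta_j,0}$. Next I would establish the tracking estimate: lifting to $S^1_\theta\times G$ via Remark~\ref{rkpul} both the orbit $g(s)$ of (\ref{dlamn}) starting at $(\theta_j,g_0)$ and the frozen geodesic-line $\tilde g(s)$ through $g_0$ with constant control $(\cos\theta_j,\sin\theta_j)$, the explicit solutions in Heisenberg coordinates give $|a-\tilde a|,|b-\tilde b|=O(ps^2)$ on $s\in[0,T]$, and, after cancellation of the $1/p$-singular terms inside $\int a\sin\theta\,dr$, also $|c-\tilde c|=O(ps^3)$, with constants depending only on $T$. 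The Heisenberg ball-box inequality applied to the group element $g(s)^{-1}\tilde g(s)$ then gives
\[
d_M\bigl(\pi(g(s)),\pi(\tilde g(s))\bigr)\ \le\ d_G(g(s),\tilde g(s))\ =\ O\bigl(T\sqrt{pT}\bigr),\qquad s\in[0,T].
\]

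\textbf{Conclusion.} I then choose $\delta>0$ small enough (of order $\varepsilon^2/T^3$) so that for every $p\in(0,\delta)$ both $pT<\varepsilon/4$ and the tracking error above is less than $\varepsilon/4$. Fix such $p$, an arbitrary orbit, and an arbitrary target $(\theta^*,q^*)\in S_p$. Pick $\theta_j$ with $|\theta^*-\theta_j|<\varepsilon/4$; since $\dot\theta=p>0$, the orbit reaches $\theta=\theta_j$ at some time $t_j\in[0,2\pi/p)$. On $[t_j,t_j+T]$ the $\theta$-coordinate stays within $pT<\varepsilon/4$ of $\theta_j$ (hence within $\varepsilon/2$ of $\theta^*$); the frozen geodesic-line starting at $q(t_j)$ enters the $\varepsilon/4$-neighborhood of $q^*$; and the actual orbit remains within $\varepsilon/4$ of the frozen one. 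Combining these, the orbit passes within $\varepsilon$ of $(\theta^*,q^*)$ in $S_p$.

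\textbf{Main obstacle.} The tracking estimate will be the main technical point. A direct coordinate comparison on $M$ fails because of the jumps in $c'=\{c-[a]b\}$ at the boundary of the fundamental domain; the clean route is to stay on the universal cover $G$ and bound the group element $g(s)^{-1}\tilde g(s)$ as above. The subtle point is that although $a(s)$ itself oscillates with amplitude $\sim1/p$ over a full $\theta$-period, the \emph{differences} between the actual and frozen orbits from a common initial point remain of size $(pT^2,pT^2,pT^3)$ on $s\in[0,T]$ because the fast $1/p$-oscillations in $a$ cancel against the $\sin\theta(r)/p$ contribution in the integral defining $c-\tilde c$. The Heisenberg ball-box inequality then converts these coordinate bounds into the sub-Riemannian estimate $O(T\sqrt{pT})$, which vanishes as $p\to 0$.
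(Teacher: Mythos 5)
Your argument follows essentially the same strategy as the paper's: fix a finite $\varepsilon/4$-net of irrational-slope angles, take a uniform density time $T$ for the frozen geodesic-line flows, and use slow drift of $\theta$ to shadow the frozen flow on windows of length $T$. The paper handles the shadowing step more softly (the vector fields on $S_0$ and $S_p$, viewed on the same compact $S^1_\theta\times M$, are $\delta$-close, so continuous dependence of solutions does the tracking for free); your explicit estimate via the universal cover and the Heisenberg ball-box inequality is a correct quantitative substitute, though the stated concern about jumps in $c'=\{c-[a]b\}$ is unfounded, since the flow and the distance on $M$ are smooth and the Gronwall argument applies directly on $M$.
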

\begin{proof} Choose a finite collection of numbers $\theta_1,\dots,\theta_N\in S^1=\rr\slash2\pi\zz$ with 
 $\tan\theta_j\notin\mathbb Q\cup\{\infty\}$ that is $\frac\var4$-dense on $S^1$. There exists a $T>0$ 
 such that for every $j=1,\dots,N$ each finite orbit of the flow on $M_{\theta_j,0}$ is $\frac\var4$-dense in 
 $M_{\theta_j,0}$. The vector fields defining the flows on $S_0$ and $S_p$ with  $p\in(0,\delta)$ 
 (both identified with the same product $S^1_\theta\times M$) are $\delta$-close to each other. Therefore, 
 as $\delta$ is small enough (depending on $\var$ and $T$), for every $p\in(0,\delta)$ 
 the finite orbit of the flow on $S_p=S^1\times M$ in times 
 $t\in[0,T]$ starting at each point $(\theta_j,x)$, $x\in M$, has $\frac\var2$-dense projection to $M$ so that 
 $\theta(t)\in[\theta_j,\theta_j+\frac\var4]$ for every $t\in[0,T]$.  
 Along each full orbit in $S_p$ the coordinate $\theta$ takes all values, including the above $\theta_j$'s. 
 This together with the latter statement implies that it is $\var$-dense in $S_p$. The proposition is proved.
 \end{proof}
  
  Let us now prove Statement 5). Let the restriction  of the Hamiltonian flow to  $\Sigma$ have a non-constant analytic integral $I$. Let us show that $I$ is constant on each level hypersurface 
 $S_p=\Sigma\cap\{ h_3=p\}$ of the function $h_3|_\Sigma$. This will imply that $I$ is a function of $h_3$ 
 and prove Statement 5). 
 
 The function $I_p:=I|_{S_p}$ achieves its minimum on a compact invariant set  for the flow in $S_p=S^1\times M$, 
 which will be denoted by $O_{p}\subset S^1\times M$ (compactness).  
 For arbitrarily small  $\var>0$ there exists a $\delta>0$ such that for every $p\in(0,\delta)$ 
 the invariant set $O_{p}$ is $\var$-dense in $S^1\times M$, since it consists of full orbits and by 
 Proposition \ref{propp}. Thus, it accumulates to all of $S_0$, as $p\to0$.  One has $dI_p=0$ at all points in $O_p$, thus, at all points of an $\var$-dense subset accumulating to the whole hypersurface $S_0\subset\Sigma$. 
 This implies that $I\equiv const$ on $S_0$. 
 
 The manifold $\Sigma$ is identified with the product $S^1_\theta\times M\times\rr_{h_3}$. 
 Suppose the contrary: there exists a $p\in\rr$ such that $I$ is non-constant along the hypersurface 
 $S_p=S^1\times M\times\{p\}\subset\Sigma$. 
 Then some its first order partial derivative in local coordinates of the product $S^1\times M$ is not identically zero. Let us denote the latter derivative by $g$.  Fix a point  $y\in S_0=S^1\times M\times\{0\}$ and consider 
 the  analytic extension of the function $g$ to some its complex  neighborhood $U=U(y)$ in the 
 complexified manifold $\Sigma$. Then $g\not\equiv0$ on $U$, by uniqueness of analytic extension and connectivity. On one hand, the zero locus $\{ g=0\}$ contains the sets $O_p\times\{p\}\subset\{ dI_p=0\}$. The latter sets, and hence, the intersections $\{ g=0\}\cap\{ h_3=p\}$  accumulate to all of $S_0\cap U$, as do $O_p$.

 On the other hand, the zero locus  of  a non-identically-zero analytic function $g$ on $U$ vanishing on $S_0$ 
 (where $I=const$) 
  is the union of the intersection $S_0\cap U$  and another complex hypersurface that is a closed subset in $U$ 
 intersecting the complexified hypersurface $S_0$ by an analytic subset of complex codimension two. This follows from basic analytic set theory, see 
 \cite{chirka}, which also implies that the latter codimension two analytic subset cannot contain all of the real part of the intersection 
 $S_0\cap U$. Thus, the set $\{ g=0\}\setminus S_0$ 
 cannot accumulate to the real hypersurface $S_0\cap U$.  The contradiction thus obtained proves that $I_p\equiv const$ for every $p\in\rr$ and finishes the proof of 
 Statement 5) and hence, Theorem \ref{tint}.

\section[Two-sided bounds of   balls and distance]{Two-sided bounds of the Heisenberg \\ \sR balls and distance}\label{sec:ellips}
The \sR sphere of radius $R>0$ on the Heisenberg group $G$ centered at the origin $g_0 = \Id$ is parameterized as follows \cite{ABB_book, umn1}:
\begin{align*}
&x = R \frac{\sin p}{p} \cos \tau, \quad y = R \frac{\sin p}{p} \sin \tau, \quad z = R^2 \frac{2p -\sin 2p}{8p^2}, \\
&p \in [-\pi, \pi], \quad \tau \in \R/(2 \pi \Z), 
\end{align*}
denote it as $S_R$. Each sphere is a rotation surface around the $z$-axis, and spheres of  different radii are transferred one into another by dilations
$$
\d_k(x, y, z) = (kx, ky, k^2z), \qquad k > 0, \quad (x, y, z) \in G,
$$
as follows:
\be{dilat}
\d_k(S_R) = S_{kR}.
\ee
The unit sphere $S:=S_1$ is a rotation surface around the $z$-axis of the curve
\be{S1par}
r = \frac{\sin p}{p}, \quad z = \frac{2p-\sin 2p}{8p^2}, \qquad p \in [-\pi, \pi],
\ee
where $r = \sqrt{x^2+y^2}$, see this curve in Fig. \ref{fig:S1}. The curve \eq{S1par} intersects the $z$-axis at the points $(r, z) = \left(0, \pm \frac{1}{4 \pi}\right)$. The unit sphere $S$ is shown below in coordinates $(x, y, z)$ (Fig. \ref{fig:sphxyz}) and in coordinates $(a, b, c)$ (Fig. \ref{fig:sphabc}).

\onefiglabelsizen{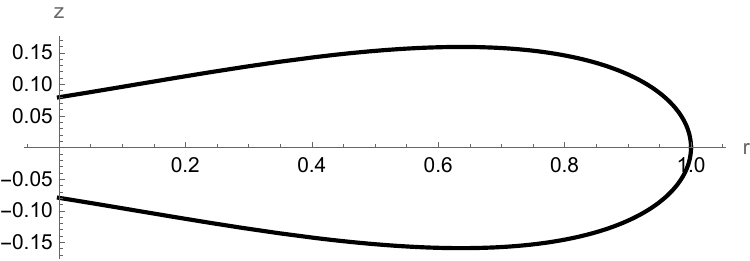}{Section of the Heisenberg unit sphere}{fig:S1}{2}

\twofiglabelsize
{Heis_sphere_xyz}{Sphere $S$ in coordinates $(x, y, z)$}{fig:sphxyz}{0.25}
{Heis_sphere_abc}{Sphere $S$ in coordinates $(a, b, c)$}{fig:sphabc}{0.28}

Consider the following domains bounded by ellipses in the plane $\R^2_{r, z}$:
\begin{align}
&e_1 \ : \ r^2 + 16 \pi^2 z^2 \leq 1, \label{e1}\\
&e_2 \ : \ r^2 + 12  z^2 \leq 1. \label{e2}
\end{align}

\begin{lemma}\label{lem:e1}
The ellipse $\partial e_1$ passes through the points $(r, z) = (1, 0)$ and $(r, z) = \left(0, \pm \frac{1}{4 \pi}\right)$. The intersection $e_1 \cap \{r \geq 0\}$ is contained inside the curve \eq{S1par}, see Fig. $\ref{fig:S1e1}$. Moreover, the curves $\partial e_1 \cap \{r \geq 0\}$ and \eq{S1par} intersect only at the points $(r, z) = (0, \pm \frac{1}{4 \pi})$ and  $(r, z) = (1, 0)$. 
\end{lemma}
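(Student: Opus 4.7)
The plan is to reduce the three assertions of the lemma to a single strict inequality on the curve \eqref{S1par} and to verify it on two overlapping subintervals of the parameter by different elementary arguments. Membership of $(1,0)$ and $(0,\pm 1/(4\pi))$ in both $\partial e_1$ and \eqref{S1par} is immediate substitution: they are the images of $p=0$ and $p=\pm\pi$. The symmetry $z\mapsto -z$ shared by both curves reduces the work to the upper half $p\in[0,\pi]$. Since $r(p)=\sin p/p$ is strictly decreasing from $1$ to $0$ on $[0,\pi]$, the upper arc of \eqref{S1par} is the graph of a function $z_s(r)$, $r\in[0,1]$, and the upper arc of $\partial e_1$ is the graph of $z_e(r)=\sqrt{1-r^2}/(4\pi)$; both the containment and the uniqueness of the intersection points thus reduce to $z_s(r)>z_e(r)$ for $r\in(0,1)$, i.e.\ to
\[
G(p) := r(p)^2 + 16\pi^2 z(p)^2 - 1 \;>\; 0, \qquad p\in(0,\pi),
\]
with $G(0)=G(\pi)=0$ automatic.

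I will split $[0,\pi]$ at $p=1$. On $[1,\pi]$ the plan is to show $z(p)\ge 1/(4\pi)$, with equality only at $p=\pi$, which forces $16\pi^2 z^2\ge 1$ and therefore $G\ge r^2>0$ for $p<\pi$. A direct differentiation gives $z'(p)=\cos p\,(\sin p-p\cos p)/(2p^3)$; since $(\sin p-p\cos p)'=p\sin p>0$ on $(0,\pi)$, the function $z$ is strictly increasing on $(0,\pi/2)$ and strictly decreasing on $(\pi/2,\pi)$, with $z(\pi/2)=1/(2\pi)$ and $z(\pi)=1/(4\pi)$. The only nontrivial check is $z(1)>1/(4\pi)$, i.e.\ $\pi(2-\sin 2)>2$, which follows from $\sin 2<1$ and $\pi>2$.

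On $(0,1]$ the plan is a direct lower bound on $G$. The classical Taylor inequalities $\sin p\ge p-p^3/6$ and $\sin 2p\le 2p-(2p)^3/6+(2p)^5/120$, valid for all $p\ge 0$, yield $1-r^2\le p^2/3-p^4/36$ and $z\ge p/6-p^3/30>0$ on $(0,1]$. Squaring the lower bound on $z$, substituting into $G=16\pi^2 z^2-(1-r^2)$, and collecting powers of $p$ produces
\[
G(p) \;\ge\; \frac{p^2}{180}\bigl[\,80\pi^2-60-(32\pi^2-5)\,p^2\,\bigr].
\]
The bracket is linear in $p^2$ with negative slope; at $p=1$ it equals $48\pi^2-55>0$, so it remains positive on $[0,1]$ and $G(p)>0$ on $(0,1]$.

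Together the two cases cover $[0,\pi]$, giving $G>0$ on $(0,\pi)$; the lemma then follows by the $z\mapsto -z$ symmetry. The main technical obstacle is choosing a split point at which both arguments remain effective: the Taylor-based bound degrades for large $p$ because the coefficient of $p^2$ in its bracket is negative, while the $z\ge 1/(4\pi)$ estimate is empty near $p=0$; the value $p=1$ sits comfortably on both sides.
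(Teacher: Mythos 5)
Your proof is correct. The reduction to showing $G(p)=r(p)^2+16\pi^2 z(p)^2-1>0$ on $(0,\pi)$ is the same as the paper's (the paper calls this $\f_1(p)$), but your verification of this inequality is a genuinely different and somewhat cleaner route. The paper splits $(0,\pi)$ at $\pi/6$ and $\pi/2$: on $[\pi/6,\pi)$ it shows directly that the second summand $f_2(p)=\frac{\pi^2}{4p^4}(2p-\sin 2p)^2$ already exceeds $1$, using a single derivative computation and a spot-check at $p=\pi/6$; on $(0,\pi/6]$, where the inequality is asymptotically tight, it runs a five-level chain of auxiliary functions $f_3,\dots,f_6$ (the ``divide et impera'' method) to reduce positivity to a final elementary sign check. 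You instead split at $p=1$: on $[1,\pi]$ you observe that $z(p)$ is unimodal with minimum value $1/(4\pi)$ at the endpoint $p=\pi$, so $16\pi^2 z^2\geq 1$ and $G\geq r^2>0$ there --- this needs only one derivative of $z$ plus the trivial check $\pi(2-\sin 2)>2$; on $(0,1]$ you use the third- and fifth-order Taylor bounds for $\sin$ to bound $G$ below by $\frac{p^2}{180}\bigl[80\pi^2-60-(32\pi^2-5)p^2\bigr]$, which is manifestly positive since it is linear and decreasing in $p^2$ with a positive value $48\pi^2-55$ at $p=1$. Your argument avoids the deep derivative cascade entirely and is shorter to verify; the price is that the Taylor truncations used here are too coarse to attack the companion Lemma~\ref{lem:e2}, where the ellipse is fourth-order tangent to the curve at $p=0$ and the paper's finer derivative chain becomes necessary, so the paper's uniform technique has the advantage of covering both lemmas with the same machinery.
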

\begin{proof}
First of all, it is obvious from \eq{S1par} and \eq{e1} that the curves $\partial e_1$ and \eq{S1par} intersect   at the points $(r, z) = (0, \pm \frac{1}{4 \pi})$ and  $(r, z) = (1, 0)$.

Further, the ellipse $\partial e_1$ is the zero level curve of the function $f_1(r, z) = r^2+16\pi^2z^2-1$. Evaluation of this function on the curve \eq{S1par} is the function $\f_1(p) = \frac{\sin^2p}{p^2} + \frac{\pi^2}{4p^4}(2p-\sin 2p)^2-1$. A standard calculus shows that $\f_1(0) = \f_1(\pm \pi) = 0$, and $\f_1(p) > 0$ for $0 < |p|<\pi$, see Fig. \ref{fig:phi1(p)}.

Indeed, we have $\f_1(p) = f_1(p) + f_2(p) - 1$, $f_1(p) = \frac{\sin^2 p}{p^2}$, $f_2(p) = \frac{\pi^2}{4 p^4} (2p-\sin 2p)^2$.
Notice that $f_2'(p) = \frac{2 \pi^2}{p^5} \cos p (p \cos p - \sin p)(\sin 2p - 2p)$.

If $p \in (\pi/2, \pi)$, then $f_2'(p) < 0$, thus $f_2(p)$ decreases. Since $f_2(\pi) = 1$, then $f_2(p) > 1$, thus $\f_1(p) > 0$ for $p \in (\pi/2, \pi)$. 

If $p \in (0, \pi/2)$, then $f_2'(p) > 0$, thus $f_2(p)$ increases. Since $f_2(\pi/6) = \frac{324}{\pi^2} \left(\frac{\pi}{3} - \frac{\sqrt{3}}{2}\right)^2 \approx 1.08 >1$, then $f_2(p) > 1$  and  $\f_1(p) > 0$ for $p \in [\pi/6, \pi/2]$.

In the proof below and in next lemmas we prove bounds of the form $g_1(p) < 0$, $g_1(0) = 0$, by comparing $g_1(p)$ with appropriate and more simple function $g_2(p)$, such that $(g_1(p)/g_2(p))' g_2^2(p) > 0$. We described this method and called it ``divide et impera'' in  \cite{cartan_conj}.

We have the following equalities:
\begin{align*}
&\f_1'(p) = \frac{p \cos p - \sin p}{p^5} f_3(p),\\
&f_3(p) = 2(p^2 + \pi^2 + \pi^2\cos 2p)\sin p - 4 p \pi^2 \cos p, \\
&f_4(p) =  \left( \frac{f_3(p)}{\sin p}\right)' \sin^2 p = 2p(1+2 \pi^2-\cos 2p)+\pi^2(\sin 4p - 4 \sin 2 p),\\
&\left(\frac{f_4(p)}{\cos 2p - 2 \pi^2 - 1}\right)'(\cos 2p - 2 \pi^2 - 1)^2 = 4 f_5(p) \sin^2p,\\
&f_5(p) = -1 - 7 \pi^2 +(1+2 \pi^2+8\pi^4)\cos 2p - \pi^2 \cos 4p, \\
&f_5'(p) = 4 \cos p \sin p f_6(p), \\
&f_6(p) = - 1 - 2 \pi^2 - 8 \pi^4+4\pi^2 \cos 2p.
\end{align*}
One has $f_6(p)\leq-1+2\pi^2-8\pi^4<0$ for all $p$, since $\cos2p\leq1$. Therefore the restriction to the 
semi-interval $(0,\pi/6]$ of the function $f_5(p)$ decreases, and hence, achieves its minimum at $p=\pi/6$. 
Its  value there is equal to 
$$-1-7\pi^2+(1+2\pi^2+8\pi^4)/2+\pi^2/2=-1/2-5.5\pi^2+4\pi^4>0.$$
Therefore, $f_5(p)>0$ on the above semi-interval. Hence, the function 
$$\wt f_4(p):=\frac{f_4(p)}{\cos 2p - 2 \pi^2 - 1}$$ 
increases there and thus, 
achieves there its minimum at $p=0$. But $\wt f_4(0)=f_4(0)=0$. Therefore,  $\wt f_4(p)>0$, hence 
$f_4(p)<0$ for $p\in(0,\pi/6]$. Thus,  
the function 
$$\wt f_3(p):=\frac{f_3(p)}{\sin p}$$ 
decreases on the same semi-interval. Hence, it achieves its supremum there at $p=0$. 
But $\wt f_3(0)=0$. Therefore, $\wt f_3(p)<0$, hence $f_3(p)<0$ on the semi-interval $(0,\pi/6]$. Thus, $\f_1$ increases there, by the above formula for its derivative and since $p\cos p-\sin p<0$, i.e., 
$p<\tan p$, whenever $p\in(0,\pi/2)$. But 
$\f_1(0)=0$. Hence, $\f_1>0$ there.

Summing up, if $p \in (0, \pi)$ then $\f_1(p) > 0$. Since $\f_1(p)$ is even, this inequality holds for $0 < |p|<\pi$. 
\end{proof}

\twofiglabelsize
{S1e1}{Ellipse $\partial e_1$ inside of section of sphere $S$}{fig:S1e1}{0.18}
{S1e2}{Ellipse $\partial e_2$ outside of section of sphere $S$}{fig:S1e2}{0.27}

\twofiglabelsize
{phi1p}{Plot of $\f_1(p)$}{fig:phi1(p)}{0.25}
{phi2p}{Plot of $\f_2(p)$}{fig:phi2(p)}{0.25}

\begin{remark}
The ellipse $\partial e_1$ is the only ellipse in the plane $(r, z)$, symmetric with respect to the $z$-axis, with the properties given in Lemma \ref{lem:e1}.
\end{remark}

\begin{lemma}\label{lem:e2}
The curve $\partial e_2$ is tangent to the curve \eq{S1par} with contact of order $4$. The intersection $\partial e_2 \cap\{r \geq 0\}$ is contained outside of the curve~\eq{S1par}, see Fig. $\ref{fig:S1e2}$. Moreover, the curves $\partial e_2 \cap \{r \geq 0\}$ and \eq{S1par} intersect only at the point    $(r, z) = (1, 0)$. 
\end{lemma}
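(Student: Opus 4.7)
The strategy parallels Lemma \ref{lem:e1}. Let $f_2(r, z) := r^2 + 12 z^2 - 1$ be the defining function of $\partial e_2$, and let its pullback along the parametrization \eq{S1par} be
$$\f_2(p) := f_2(r(p), z(p)) = \frac{\sin^2 p}{p^2} + \frac{3(2p - \sin 2p)^2}{16\, p^4} - 1.$$
The lemma amounts to three claims: (i) $\f_2$ vanishes to order exactly four at $p = 0$, giving the claimed fourth-order contact at $(r, z) = (1, 0)$; (ii) $\f_2(\pm \pi) < 0$; and (iii) $\f_2(p) < 0$ for every $p \in (-\pi, \pi) \setminus \{0\}$.

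Claim (i) is immediate from Taylor expansion: $\sin^2 p / p^2 = 1 - p^2/3 + 2 p^4/45 + O(p^6)$ and $(2p - \sin 2p)^2/(16 p^4) = p^2/9 - 2 p^4/45 + O(p^6)$ cancel at orders $p^0$ and $p^2$ and combine to $\f_2(p) = -(4/45)\, p^4 + O(p^6)$. Claim (ii) is the direct evaluation $\f_2(\pm \pi) = 3/(4\pi^2) - 1 < 0$, using $r(\pm \pi) = 0$ and $z(\pm \pi) = \pm 1/(4\pi)$. By the parity $\f_2(-p) = \f_2(p)$, claim (iii) reduces to $\f_2(p) < 0$ on $(0, \pi)$.

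For this negativity I would imitate the ``divide et impera'' chain of Lemma \ref{lem:e1}: differentiate $\f_2$, factor out a sign-definite piece such as $(p \cos p - \sin p)/p^k$, and iterate ``divide by a sign-definite factor and differentiate'' until the remainder becomes a trigonometric polynomial in $\cos 2p$ and $\cos 4p$ whose sign is readable by bounding $\cos 2p \leq 1$ together with evaluation at endpoints of a subinterval. A natural split is at $p = \pi/6$: on $[\pi/6, \pi)$ the inequality $\sin^2 p/p^2 + 12 z(p)^2 < 1$ should follow from the monotonicities of the two summands and explicit evaluation at a few points (e.g.\ $\pi/6$, $\pi/2$, $\pi$), whereas on $(0, \pi/6]$ the cancellation producing the $p^4$-zero forces the derivative chain to be pushed further than for $\f_1$.

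The main obstacle is exactly this last step: since $\f_2$ vanishes to order four at the origin (versus order two for $\f_1$), the surviving trigonometric polynomial at the bottom of the ``divide et impera'' chain will be noticeably more intricate, and the culminating positivity estimate (the analog of $-1/2 - 5.5\pi^2 + 4 \pi^4 > 0$ from Lemma \ref{lem:e1}) will involve larger numerical constants and at least one additional iteration. Once (iii) is established, the final sentence of the lemma is immediate: the intersection of $\partial e_2 \cap \{r \geq 0\}$ with \eq{S1par} is exactly the zero locus of $\f_2$ on $[-\pi, \pi]$, which by (i) and (iii) reduces to the single point $p = 0$, i.e.\ $(r, z) = (1, 0)$.
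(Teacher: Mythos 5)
Your claims (i) and (ii) are handled correctly. For (i) you use a Taylor expansion showing $\f_2(p) = -\tfrac{4}{45}p^4 + O(p^6)$, which is cleaner than the paper's route (the paper instead computes $\frac{d^2r}{dz^2} = \frac{4p^3(1+p\tan p)}{\cos^3 p\,(p-\tan p)} \to -12$ as $p\to 0$ and matches it with the ellipse's expansion $r=1-6z^2+O(z^4)$); both are valid and equivalent.

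However, for claim (iii) -- the inequality $\f_2(p) < 0$ on $(0,\pi)$ -- you only describe a plan and do not carry it out. You explicitly acknowledge this (``the main obstacle is exactly this last step''). This inequality \emph{is} the lemma: the paper devotes a long ``divide et impera'' chain to it, starting from $f_3(p) = (16p^4\f_2(p))'$ and iterating differentiation and division by sign-definite factors through $f_4, \dots, f_{11}$, then splitting $(0,\pi)$ at $\pi/4$, $3\pi/8$, $\pi/2$, $\pi$ with explicit numerical bounds. Your proposal correctly anticipates the structure of that chain, but stating that one would imitate the chain is not a proof; the missing computations are the hard content.

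A further concrete worry about your sketch: the proposed split at $p=\pi/6$ is a poor choice. One computes $\f_2(\pi/6) = \frac{9}{\pi^2} + \frac{243(\pi/3 - \sqrt{3}/2)^2}{\pi^4} - 1 \approx -0.006$, which is extremely close to zero, so any estimate near $\pi/6$ must be quite tight. Worse, on $[\pi/6, \pi/2]$ the two summands have \emph{opposite} monotonicities ($\sin^2 p/p^2$ decreases while $12z(p)^2 = \frac{3(2p-\sin 2p)^2}{16p^4}$ increases, since its derivative $-\tfrac{3}{p^5}\cos p\,(p\cos p-\sin p)(p-\cos p\sin p)$ is positive there), so bounding each by its maximal endpoint value yields $\approx \frac{9}{\pi^2} + \frac{3}{\pi^2} \approx 1.22 > 1$, which fails. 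The claim that ``monotonicities of the two summands and explicit evaluation at a few points'' suffice on $[\pi/6,\pi)$ is therefore not substantiated, and the paper's finer subdivision and deeper derivative chain appear to be genuinely necessary.
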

\begin{proof}
The first statement is obtained by explicit differentiation. 
Indeed,  for the ellipse $\partial e_2$ we have $r = \sqrt{1 - 12z^2} = 1 - 6z^2 + O(z^4)$, $z \to 0$. And for the curve \eq{S1par} we have $r(0) = 1$, $z(0) = 0$. In a neighbourhood of the point $(r,z)=(1,0)$, the curves in question are graphs of even functions $r(z)$. Thus, it is sufficient to prove coincidence of their second derivatives at $z=0$. One has 
\begin{align*}
&\frac{dr}{dz} = \frac{dr/dp}{dz/dp} = -\frac{2p}{\cos p}, \\
&\frac{d^2r}{dz^2} = \frac{\frac{d}{dp}(-\frac{2p}{\cos p})}{dz/dp} = \frac{4p^3(1+p \tan p)}{\cos^3 p(p - \tan p)} \to -12, \quad p \to 0,
\end{align*}
thus $r =  1 - 6z^2 + O(z^4)$, $z \to 0$.

 The second statement follows since the function $r^2+12z^2-1$ whose zero level curve is the ellipse $\partial e_2$, when restricted to the curve \eq{S1par}, gives the function $\f_2(p) = \frac{\sin^2p}{p^2} + \frac{3(2p-\sin 2p)^2}{16 p^4}-1$. A standard calculus shows that $\f_2(p) < 0$ for $0 < |p|\leq \pi$, see Fig. \ref{fig:phi2(p)}.

Indeed, we have $\f_2(p) = f_1(p) + f_2(p) - 1$, $f_1(p) = \frac{\sin^2p}{p^2}$, $f_2(p) = \frac{3(2p-\sin 2p)^2}{16 p^4}$. Further,
\begin{align*}
&f_3(p) = (16 p^4 \f_2(p))' =  40 p - 64 p^3 - 40 p \cos 2p + 4(4p^2-3)\sin 2p + 6 \sin 4p,\\ 
&f_4(p) = f_3'(p) =  8(5-24p^2 + 4(p^2-2)\cos 2p+3\cos 4p + 14 p \sin 2p), \\ 
&f_5(p) = f_4'(p) =  -16(-18p \cos 2p + (4p^2-15) \sin 2p + 6(4p + \sin 4p)), \\ 
&f_6(p) =f_5'(p) =  - 64(6+2(p^2-6)\cos 2p+6\cos 4p+11p\sin 2p), \\
&f_7(p) = \left(\frac{f_6(p)}{\cos 2p}\right)' \cos^2 2p =  32(-48 p - 4p \cos 4p + 12 \sin 2p - 11 \sin 4p + 12 \sin 6p), \\ 
&f_8(p) =f_7'(p) =  512 f_9(p) f_{10}(p), \\
&f_9(p) = \sin p - \sin 3p, \qquad f_{10}(p) = - 2 p \cos p + 3 \sin p + 9 \sin 3p, \\
&f_{11}(p) = \left(\frac{f_{10}(p)}{\cos p}\right)' \cos^2 p = 2 + 17 \cos 2p + 9 \cos 4p.
\end{align*}

Let $p \in (0, \frac{\pi}{8}]$, then $f_{11}(p) > 0$, thus $\tf_{10}(p) = \frac{f_{10}(p)}{\cos p}$ increases. Since $\tf_{10}(0) = 0$, then $\tf_{10}(p)>0$, so $f_{10}(p)>0$.

Let $p \in (\frac{\pi}{8}, \frac{\pi}{4})$. Then $-2p \cos p > -\frac{\pi}{2} \cos \frac{\pi}{4} \approx -1.11 > -2$, $3 \sin p > 3 \sin \frac{\pi}{8} \approx 1.14 > 1$, $9 \sin 3p > 9 \sin \frac{3\pi}{4} \approx 6.36 > 6$, thus $f_{10}(p) > -2 + 1 + 6 > 0$. 

Now let $p \in (0, \pi/4)$, we have proved that $f_{10}(p) > 0$. 
Since $f_9(p) < 0$, then $f_8(p) < 0$, thus $f_7(p)$ decreases.
Since $f_7(0) = 0$, then $f_7(p) < 0$, thus $\tf_6(p) = \frac{f_6(p)}{\cos 2p}$ decreases.
Since $\tf_6(0) = 0$ then $\tf_6(p) < 0$, thus $f_6(p) < 0$.
Thus $f_5(p)$ decreases, and since $f_5(0) = 0$ then $f_5(p) < 0$.
Thus $f_4(p)$ decreases, and since $f_4(0) = 0$ then $f_4(p) < 0$.
Thus $f_3(p)$ decreases, and since $f_3(0) = 0$ then $f_3(p) < 0$.
Thus $p^4 \f_2(p)$ decreases, and since $\lim_{p \to 0} p^4 \f_2(p) = 0$ then $\f_2(p) < 0$ for $p \in (0, \pi/4]$.
 
If $p \in (\pi/4, 3\pi/8)$, then $-48 p < -12 \pi < -37.6$, $|4 p \cos 4 p| < 3 \pi/2 < 4.72$, $|12 \sin 6 p|  < 12$. One has $|12 \sin 2 p - 11 \sin 4p| < 20.3$, since this is true at the endpoints $p=\pi/4, 3\pi/8$ and 
at the extremum point of the function under modulus in the interval in question. Indeed, its derivative in 
$x=2p\in(\pi/2, 3\pi/4)$ is equal to $12\cos x-22\cos2x=12u-22(2u^2-1)$, $u=\cos x$. The latter derivative vanishes, if and only if $22u^2-6u-11=0$. Solving the latter quadratic equation in negative $u=\cos x$ 
(which is indeed negative in the given interval) yields
$$u=\cos x=\frac{3-\sqrt{251}}{22}\approx-0.58377, \ \sin x=\sqrt{1-u^2}\approx\sqrt{0.65921}\approx0.811,$$
$$|12 \sin 2 p - 11 \sin 4p|=2\sin x(6-11\cos x)\approx20.169<20.3.$$
Thus, $\frac{f_7(p)}{32} < -37.6 + 4.72 + 20.3 + 12 < 0$. If $p \in [3\pi/8, \pi/2)$, then $-48 p < - 50$, $-4p \cos 4p \leq 0$, $|12 \sin 2 p - 11 \sin 4p + 12 \sin 6p| < 35$, thus $\frac{f_7(p)}{32} < -50 + 35 < 0$.

Thus for $p \in (\pi/4, \pi/2)$ we have $f_7(p) < 0$, so repeating the argument used two paragraphs above we get $f_i(p) < 0$ for $i = 3, \dots, 6$, hence $\f_2(p) < 0$.

Finally, if $p \in [\pi/2, \pi)$, then $f_1(p) \leq \frac{4}{\pi^2} < \frac 12$. 
Since $$f_2'(p) = -\frac{3}{p^5} \cos p (p \cos p - \sin p)(p-\cos p \sin p) \leq 0,$$ then $f_2(p)$ decreases, and since $f_2(\frac{\pi}{2}) = \frac{3}{\pi^2} \approx 0.3$ then $f_2(p) < \frac 12$.
Thus $\f_2(p) < 0$.

If $p = \pi$ then $\f_2(p) = \frac{3-4\pi^2}{4\pi^2} < 0$.

We proved that $\f_2(p) < 0$ for $p \in (0, \pi]$.
Since $\f_2(p)$ is even, this inequality holds for $0 < |p|\leq \pi$.
\end{proof}

\begin{remark}
The ellipse $\partial e_2$ is the smallest ellipse in the plane $(r, z)$ among ellipses symmetric with respect to the $z$-axis, tangent to the curve \eq{S1par} at the point $(r,z) = (1, 0)$ and encircling this curve.
\end{remark}

Consider the projection

\begin{align*}
&\map{P}{\R^3_{x, y, z}}{ \{(r, z) \in \R^2 \mid r \geq 0\}},\\
&P(x, y, z) = (r, z) = (\sqrt{x^2+y^2},z)
\end{align*}
and the corresponding ellipsoids $E_i = P^{-1}(e_i)$, $i = 1, 2$. Lemmas \ref{lem:e1} and~\ref{lem:e2} plus equality \eq{dilat} imply obviously the following two-sided ellipsoidal bounds of \sR balls 
$$
B_R := \{g \in G \mid d(\Id, g) \leq R\}
$$
on the Heisenberg group.

\begin{corollary}\label{cor:E12}
For any $R > 0$ we have
\be{ball_bounds}
\d_R(E_1) \subset B_R \subset \d_R(E_2).
\ee
\end{corollary}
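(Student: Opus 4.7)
The plan is to reduce the three-dimensional claim to the planar inclusions proved in Lemmas~\ref{lem:e1}--\ref{lem:e2}, using rotational symmetry of all three sets involved together with the scaling property~\eq{dilat}.

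First I would handle the $R$-dependence. Applying the diffeomorphism $\d_R$ to the interior and exterior of $S_1$ and using $\d_R(S_1) = S_R$ from~\eq{dilat} gives $\d_R(B_1) = B_R$. Hence it suffices to establish the single inclusion chain $E_1 \subset B_1 \subset E_2$, and then apply $\d_R$ to both ends to recover~\eq{ball_bounds}.

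Next I would descend to two dimensions by rotational symmetry. The ball $B_1$ is a solid of revolution around the $z$-axis because its boundary sphere $S_1$ is presented as such by its parametrization. The ellipsoids $E_1$, $E_2$ are also solids of revolution because~\eq{e1},~\eq{e2} depend on $(x,y)$ only through $r=\sqrt{x^2+y^2}$. Each of the three solids therefore equals $P^{-1}$ of its meridian cross-section in the half-plane $\{r \geq 0\}$, so the spatial chain $E_1 \subset B_1 \subset E_2$ is equivalent to the planar chain $F_1 \subset F_0 \subset F_2$, where $F_i := e_i \cap \{r \geq 0\}$ for $i=1,2$, and $F_0$ is the closed region bounded in $\{r \geq 0\}$ by the meridian curve~\eq{S1par} and the $z$-axis segment from $(0,-\tfrac{1}{4\pi})$ to $(0,\tfrac{1}{4\pi})$.

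The planar chain now follows from the two lemmas almost immediately. Each $F_i$ is a closed topological disk bounded on the right by a smooth curve and on the left by the same $z$-axis segment. Lemma~\ref{lem:e1} states that the right boundary of $F_1$ lies in $F_0$ and meets the right boundary of $F_0$ only at the three common endpoints $(1,0)$ and $(0,\pm\tfrac{1}{4\pi})$; this ``disk inside disk'' configuration in the half-plane forces $F_1 \subset F_0$. Lemma~\ref{lem:e2} gives $F_0 \subset F_2$ by the symmetric argument, with the two right boundaries sharing only the point $(1,0)$ (and tangent there to order four, which is more than enough for the topological conclusion). I do not expect any real obstacle: the analytic content is entirely carried by the two lemmas already proved, and the only step requiring a moment of care is the elementary passage from ``one smooth boundary arc lies inside the other and meets it only at finitely many points'' to containment of the enclosed regions, which is standard for our planar disks.
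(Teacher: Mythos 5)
Your proposal is correct and follows exactly the reasoning the paper leaves implicit in the phrase ``Lemmas \ref{lem:e1} and \ref{lem:e2} plus equality \eq{dilat} imply obviously'': reduce to $R=1$ via the dilation $\d_R$, pass to the meridian half-plane using rotational invariance, and read off the planar containments from the two lemmas. No gaps.
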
 

\begin{remark}
Estimates \eq{ball_bounds} are sharp in the sense that the ellipsoids $\d_R(E_1)$ and  $\d_R(E_2)$ are tangent to the \sR ball $B_R$ at its points in the plane $\{z = 0\}$. Moreover, the ellipsoid $\d_R(E_1)$ intersects the \sR ball $B_R$ at its points in the line $\{x = y = 0\}$, see Figs. \ref{fig:S1e1}, \ref{fig:S1e2}.
\end{remark}

\begin{remark}
In order to estimate precision of bounds \eq{ball_bounds}, take the Euclidean volume $V = dx \wedge dy \wedge dz$ (in fact, Popp's volume \cite{ABB_book}). Then 
\begin{multline*}
V(E_1) = \frac 14 < V(B_1)=\frac{1}{12} \left( 1 + 2 \pi \int_0^{2\pi} \frac{\sin x}{x} dx\right) \approx 0.83 \\
< V(E_2)= \frac{\pi}{2 \sqrt{3}} \approx 0.91.
\end{multline*}
The above integral formula for the volume $V(B_1)$ was proved in 
\cite{ball-volume}, P. 587.
\end{remark}

\begin{corollary}\label{cor:udod}
Let $g = (x, y, z) \in G$, and let $r = \sqrt{x^2+y^2}$. Then
\be{dleq}
\ud(g) := \sqrt{\frac{\sqrt{r^4+48z^2}+r^2}{2}} \leq d(\Id, g) \leq \sqrt{\frac{\sqrt{r^4+64 \pi^2 z^2}+r^2}{2}} =: \od(g).
\ee
\end{corollary}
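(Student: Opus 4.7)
The plan is to translate the ellipsoidal sandwich $\d_R(E_1) \subset B_R \subset \d_R(E_2)$ of Corollary~\ref{cor:E12} into an algebraic inequality for $R = d(\Id,g)$, and then to invert the resulting biquadratic in $R$. The case $g = \Id$ is trivial (all three quantities vanish), so assume $g \neq \Id$ and set $R := d(\Id,g) > 0$. Unfolding the definition of the anisotropic dilation $\d_R(x,y,z) = (Rx, Ry, R^2z)$ and of the ellipsoids $E_i = P^{-1}(e_i)$, the condition $g \in \d_R(E_i)$ is equivalent to $(x/R, y/R, z/R^2) \in E_i$, i.e.\ respectively for $i=1$ and $i=2$ to
\begin{equation*}
R^4 - r^2 R^2 - 16\pi^2 z^2 \geq 0, \qquad R^4 - r^2 R^2 - 12 z^2 \geq 0.
\end{equation*}

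Viewed as a quadratic inequality in $u = R^2 > 0$ with constant term $-cz^2 \leq 0$ (where $c = 16\pi^2$ or $c = 12$), the polynomial $u^2 - r^2 u - cz^2$ has one non-positive and one non-negative root $u_\pm = (r^2 \pm \sqrt{r^4 + 4cz^2})/2$, and for $u > 0$ the inequality is equivalent to $u \geq u_+$. Hence $g \in \d_R(E_i)$ amounts to $R^2 \geq (r^2 + \sqrt{r^4 + 4cz^2})/2$, which for $i = 1$ (with $4c = 64\pi^2$) reads $R \geq \od(g)$ and for $i = 2$ (with $4c = 48$) reads $R \geq \ud(g)$.

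For the lower bound I would feed $R = d(\Id,g)$ into the second inclusion of \eqref{ball_bounds}: from $g \in B_R \subset \d_R(E_2)$ the equivalence above yields $d(\Id,g) \geq \ud(g)$. For the upper bound I would go the other direction: set $R' := \od(g)$, so that by construction equality holds in the algebraic condition for $i=1$, i.e.\ $g \in \partial \d_{R'}(E_1) \subset \d_{R'}(E_1) \subset B_{R'}$ by the first inclusion of \eqref{ball_bounds}; hence $d(\Id,g) \leq R' = \od(g)$. I do not expect a real obstacle here: all the geometric content is already packaged in Corollary~\ref{cor:E12}, and what remains is the routine algebraic inversion of the defining inequalities of $E_1, E_2$ under the scaling $\d_R$.
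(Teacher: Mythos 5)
Your proof is correct and takes essentially the same route as the paper: both arguments unpack Corollary~\ref{cor:E12} under the dilation $\d_R$ and then invert the resulting biquadratic in $R^2$ (the paper works with $\alpha = 1/R^2$ after rescaling $g$ onto the unit sphere, which is the same computation). Your treatment of the upper bound --- plugging $R' = \od(g)$ directly into the chain $g \in \partial\d_{R'}(E_1) \subset \d_{R'}(E_1) \subset B_{R'}$ --- is a clean repackaging of the paper's inequality $f_1(\d_{1/R}g) \geq 1$ and sidesteps the small topological point (interior of $E_1$ lies in interior of $B_1$) that the paper's version implicitly uses.
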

\begin{proof}
Since the statement holds trivially for $g = \Id = (0, 0, 0)$, we can assume that $g \neq \Id$, then $R := d(\Id, g) > 0$. Denote $g'= \d_{\frac 1R} (g)$, then $d(\Id, g') = 1$, and inclusions \eq{ball_bounds} imply that the functions $f_1(x, y, z) = r^2 + 16 \pi^2z^2$ and $f_2(x, y, z) = r^2 + 12z^2$, $r = \sqrt{x^2 + y^2}$, satisfy the inequalities $$f_1(g') = f_1\left(\frac xR, \frac yR, \frac{z}{R^2}\right) \geq 1 = d(\Id, g') \geq f_2\left(\frac xR, \frac yR, \frac{z}{R^2}\right) = f_2(g').$$  
Thus $\frac{r^2}{R^2} + 16\pi^2\frac{z^2}{R^4} \geq 1 \geq \frac{r^2}{R^2} + 12\frac{z^2}{R^4}$, i.e., 
\be{al}
16 \pi^2 z^2\a^2 + r^2 \a \geq 1 \geq 12 z^2\a^2 +r^2\a, \qquad \a = \frac{1}{R^2}.
\ee
  The second  inequality in \eq{al} solves to $0 < \a \leq \a_2 := 
	\frac{2}{\sqrt{r^4+48z^2}+r^2}$, whence 
	$R \geq \frac{1}{\sqrt{\a_2}} = \sqrt{\frac{\sqrt{r^4+48z^2}+r^2}{2}}$, which gives the first inequality in \eq{dleq}. 
	Similarly, the first inequality in \eq{al} solves to $\a \geq \a_1 := \frac{2}{\sqrt{r^4+64 \pi^2 z^2}+r^2}$, whence 
	$R \leq \frac{1}{\sqrt{\a_1}} = \sqrt{\frac{\sqrt{r^4+64 \pi^2 z^2} + r^2}{2}}$, which gives the second inequality in \eq{dleq}.
\end{proof}

\begin{remark}
Estimates \eq{dleq} are functional expressions of bounds \eq{ball_bounds}: 
$$
\{g \in G \mid \ud(g) \leq R \} = \d_R(E_2), \qquad \{g \in G \mid \od(g) \leq R \} = \d_R(E_1).
$$
\end{remark}

\begin{remark}
Estimates \eq{dleq} are sharp in the following sense:
\begin{enumerate}
\item
in the case $z = 0$ these inequalities turn into equalities,
\item
in the case $r = 0$ the second inequality turns into equality corresponding to $e_1$. 
\end{enumerate}
In the case $rz\neq 0$ the both inequalities \eq{dleq} are strict.
\end{remark}

The second inclusion in \eq{ball_bounds} obviously implies the following inclusions.

\begin{corollary}
For any $R > 0$ we have
\begin{align*}
&B_R \subset \left\{ g = (x, y, z) \in G \mid \sqrt{x^2 + y^2} \leq R\right\},\\
&B_R \subset \left\{ g = (x, y, z) \in G \mid |z| \leq \frac{R^2}{\sqrt{12}}\right\},
\end{align*}
or, which is equivalent,
\begin{align}
&d(\Id, g) \geq \sqrt{x^2 + y^2}, \label{d>=1}\\
&d(\Id, g) \geq \sqrt[4]{12z^2}. \label{d>=2}
\end{align}
\end{corollary}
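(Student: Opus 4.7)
The plan is to read off both inclusions directly from the second inclusion $B_R \subset \delta_R(E_2)$ in Corollary \ref{cor:E12}, which is the only substantive ingredient. No new analysis of the sphere is needed; the work is entirely bookkeeping with the dilation $\delta_R$ and the projection $P$.

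First I would write out $\delta_R(E_2)$ explicitly. Since $E_2 = P^{-1}(e_2) = \{(x,y,z) \mid x^2+y^2+12 z^2 \leq 1\}$ and $\delta_R(x,y,z) = (Rx,Ry,R^2 z)$, changing variables $(x,y,z) \mapsto (x/R,y/R,z/R^2)$ gives
\[
\delta_R(E_2) \;=\; \Bigl\{\,(x,y,z)\in G \;\Big|\; \frac{x^2+y^2}{R^2} + \frac{12\,z^2}{R^4} \leq 1\,\Bigr\}.
\]
Hence any $g=(x,y,z) \in B_R$ satisfies $\frac{x^2+y^2}{R^2}\leq 1$ and $\frac{12z^2}{R^4}\leq 1$, which are precisely the two claimed inclusions
\[
B_R \subset \{x^2+y^2 \leq R^2\}, \qquad B_R \subset \{|z| \leq R^2/\sqrt{12}\}.
\]

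Next I would derive the equivalent functional inequalities \eq{d>=1}, \eq{d>=2}. For $g \neq \Id$, set $R := d(\Id,g)>0$, so $g \in B_R$; substituting into the two inclusions above yields $\sqrt{x^2+y^2} \leq R = d(\Id,g)$, which is \eq{d>=1}, and $|z| \leq R^2/\sqrt{12}$, i.e.\ $12 z^2 \leq R^4$, so $\sqrt[4]{12 z^2} \leq R = d(\Id,g)$, which is \eq{d>=2}. The case $g=\Id$ is trivial since both sides vanish.

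There is no real obstacle here: the corollary is a direct geometric consequence of the ellipsoidal outer bound, and the only care needed is to track how the anisotropic dilation $\delta_R$ rescales the $z$-coordinate by $R^2$ rather than $R$, which is exactly why $|z|$ is bounded by $R^2/\sqrt{12}$ (equivalently $d \geq \sqrt[4]{12 z^2}$) and not by a linear function of $R$.
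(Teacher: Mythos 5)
Your proof is correct and follows exactly the route the paper intends: the paper simply asserts that the second inclusion in \eq{ball_bounds} "obviously implies" the corollary, and you have spelled out that bookkeeping (dilation of $E_2$, nonnegativity of each term of the ellipsoid inequality, then substitution $R=d(\Id,g)$). Nothing is missing and nothing differs in substance.
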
 

Notice that inequalities \eq{d>=1}, \eq{d>=2} follow also from the first inequality in \eq{dleq}.

\section[Bounds of cut time via lower bounds of \sR balls]{Bounds of cut time via lower bounds \\of \sR balls}\label{sec:lower}

 Fix a point $q_0=g_0'\in M$.
Denote the ball $B'_t = \{q \in M \mid d'(q_0, q) \leq t\}$, $t \geq 0$, where  $d'$ is the \sR distance on $M$.
Denote also 
$$
\tb = \inf \{t > 0 \mid B'_t= M\}.
$$

The following lemmas show the relevance of the number $\tb$ for the \sR manifold $M$.

\begin{lemma}\label{lem:tbard}
We have the following:
\begin{itemize}
\item[$(1)$]
$\tb = \sup \{ d'(q_0, q_1) \mid q_1 \in M \}$.
\item[$(2)$]
$\tb = \sup \{ \tcut(q(\cdot)) \mid q(\cdot) \subset M  \text{ a geodesic s.t. } q(0) = q_0\}$.
\end{itemize}
\end{lemma}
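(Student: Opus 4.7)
Throughout, parametrize all geodesics by arc length, and recall that by compactness of $M$ together with the bracket-generating property of the distribution inherited from $G$, the \sR distance $d'$ is finite, continuous, and induces a complete metric space structure on $M$; hence the \sR Hopf--Rinow theorem (see \cite{ABB_book}) guarantees that every pair of points of $M$ is joined by a minimizing geodesic, and in particular the continuous function $d'(q_0, \cdot)$ attains its maximum on the compact manifold $M$.

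For part (1), set $D := \sup_{q_1 \in M} d'(q_0, q_1)$; the previous paragraph shows that $D$ is finite and attained. The plan is to prove both inequalities separately. If $t > D$, then $d'(q_0, q_1) \leq D < t$ for every $q_1 \in M$, hence $B'_t = M$, yielding $\tb \leq D$. Conversely, if $t < D$, a maximizer $q_1^*$ satisfies $d'(q_0, q_1^*) > t$, so $q_1^* \notin B'_t$, therefore $\tb \geq D$. Combining, $\tb = D$.

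For part (2), denote $T := \sup \{\tcut(q(\cdot)) \mid q(\cdot) \text{ geodesic in } M,\ q(0) = q_0\}$; by part (1) it is enough to show $T = D$. For $T \geq D$, pick any $q_1 \in M$, apply Hopf--Rinow to obtain an arc-length parametrized minimizing geodesic $q(\cdot) : [0, d'(q_0, q_1)] \to M$ from $q_0$ to $q_1$; by the definition of cut time we get $\tcut(q(\cdot)) \geq d'(q_0, q_1)$, and taking the supremum over $q_1$ yields $T \geq D$. For the reverse inequality, fix a geodesic $q(\cdot)$ starting at $q_0$ and any $\tau_1 \in (0, \tcut(q(\cdot)))$; then $\restr{q(\cdot)}{[0, \tau_1]}$ is a length minimizer of length $\tau_1$, so $d'(q_0, q(\tau_1)) = \tau_1 \leq D$, and letting $\tau_1 \nearrow \tcut(q(\cdot))$ followed by taking the supremum over $q(\cdot)$ gives $T \leq D$.

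The only nontrivial ingredient is the existence of a minimizing geodesic between any two points of $M$; once the \sR Hopf--Rinow theorem is invoked, everything else reduces to unwinding the definitions of $\tb$, cut time, and length minimizer. The slight subtlety is merely to keep the parametrization of geodesics consistent (arc length), so that the length of $\restr{q(\cdot)}{[0,\tau_1]}$ equals $\tau_1$ and comparing it with $d'(q_0, q(\tau_1))$ is immediate.
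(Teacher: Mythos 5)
Your proof is correct and follows essentially the same approach as the paper: both unwind the definitions of $\tb$, $d'$, and $\tcut$, and both rely on the existence of minimizing geodesics (Hopf--Rinow) to relate cut time to distance. The only cosmetic differences are that you argue by two-sided inequalities rather than by contradiction, and you prove part (2) by showing the supremum of cut times equals $D = \sup d'(q_0, \cdot)$ and then invoking part (1), whereas the paper compares it directly to $\tb$ — but these are logically equivalent routes.
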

\begin{proof}
(1) Denote $t_1 = \sup \{ d'(q_0, q_1) \mid q_1 \in M \}$ and assume by contradiction that $\tb \neq t_1$.

Let $t_1 < \tb$. Then for every $t \in (t_1, \tb)$ and every $q_1 \in M$ we have $d'(q_0, q_1)<t$, i.e., $q_1 \in B'_t$. Since $t<\tb$, this contradicts to definition of~$\tb$.

Let $t_1 > \tb$. Then for every $t \in (\tb, t_1)$ there exists $q_1 \in M$ such that $d'(q_0, q_1) > t$, i.e., $q_1 \notin B'_t$.  Since $t>\tb$, this contradicts to definition of $\tb$ once more.

\medskip

(2) Denote $t_2 = \sup \{ \tcut(q(\cdot)) \mid q(\cdot) \subset M  \text{ a geodesic s.t. } q(0) = q_0\}$ and assume by contradiction that $\tb \neq t_2$.

Let $t_2<\tb$, take any $t \in (t_2,\tb)$. Then $B'_t \neq M$, thus there exists a point $q_1 \in M$ such that $d'(q_0, q_1) > t$. Take a \sR length minimizer $q(\cdot)$ connecting $q_0$ and $q_1$. We have $\tcut(q(\cdot)) > t > t_2$, which contradicts the definition of $t_2$.

Let $\tb < t_2$, take any $t \in (\tb, t_2)$. We have $B'_t = M$, thus for every $q_1\in M$ one has the inequality  $d'(q_0, q_1) \leq t$. Then for every geodesic $q(\cdot) \subset M$ starting at $q_0$ we have $\tcut(q(\cdot)) \leq t < t_2$, which contradicts the definition of~$t_2$.
\end{proof}

\begin{remark}
Consider the diameter of the \sR manifold $M$:
$$
\diam(M) = \sup \{d'(q_1,q_2)\mid q_1, q_2 \in M\}. 
$$
By the triangle inequality, we have a bound $\diam(M) \leq 2 \tb$.
\end{remark}

\begin{theorem}
We have $\tb \leq \ts := \frac 12 \sqrt{\frac 12 \left(1 + \sqrt{1 + 1024 \pi^2}\right)} \approx 3.56$.
\end{theorem}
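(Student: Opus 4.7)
The strategy is to prove $B'_{t_*}=M$, which forces $\tb\le t_*$ by definition of $\tb$. Since $\pi\colon G\to M$ is a local isometry (left multiplications by $H$ preserve the \sR structure), any minimizing horizontal curve in $G$ from $\Id$ to a point $g$ projects to a horizontal curve of the same length in $M$ from $q_0$ to $\pi(g)$. Hence $d'(q_0,\pi(g))\le d(\Id,g)$ for every $g\in G$, and it suffices to produce, for every $q\in M$, some preimage $g\in\pi^{-1}(q)$ with $d(\Id,g)\le t_*$.

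The first step is to exploit the left action of $H$ on $G$, which in $(a,b,c)$-coordinates reads $(m,n,k)\colon(a,b,c)\mapsto(a+m,\,b+n,\,c+mb+k)$. Given any representative of $q$, I would first pick $m,n\in\Z$ so that $\alpha:=a+m,\ \beta:=b+n$ both lie in $[-1/2,1/2]$, and then pick $k\in\Z$ so that $\gamma-\alpha\beta/2\in[-1/2,1/2]$, where $\gamma:=c+mb+k$. Here the only subtlety is that, once $m,n$ are fixed, $k$ may still be chosen freely in $\Z$, so the quantity $\gamma-\alpha\beta/2$ can be shifted by any integer and hence brought into $[-1/2,1/2]$. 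Passing to $(x,y,z)=(\alpha,\beta,\gamma-\alpha\beta/2)$ via~\eq{abcxyz}, the new representative $g$ satisfies $r^2:=x^2+y^2\le 1/2$ and $z^2\le 1/4$.

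The second step is to invoke the interior ellipsoidal bound of Corollary~\ref{cor:E12}: $\d_{t_*}(E_1)\subset B_{t_*}$. In coordinates this is the sufficient condition $r^2\,t_*^2+16\pi^2 z^2\le t_*^4$ for $d(\Id,g)\le t_*$. Squaring $8t_*^2-1=\sqrt{1+1024\pi^2}$ produces the clean identity $t_*^4=t_*^2/4+16\pi^2$. Inserting the bounds $r^2\le 1/2$, $z^2\le 1/4$, the required inequality collapses to $t_*^2/2+4\pi^2\le t_*^2/4+16\pi^2$, i.e.\ $t_*^2\le 48\pi^2$, which is immediate from $\sqrt{1+1024\pi^2}\le 1+32\pi$.

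The only step demanding any real care is the construction of the representative in Step~1 (the three sequential lattice shifts must all be realizable, which is verified above). Everything else is a routine application of the Heisenberg ball estimate; as a byproduct, one sees that the constant $t_*$ is quite generous: the same argument with $\ud$ in place of $\od$ on the same representative would yield $\tb\lesssim 2.6$, so the gain in the stated form is just a clean closed-form constant.
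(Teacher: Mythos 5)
Your proof is correct and significantly cleaner than the paper's. You take a genuinely different route: rather than covering the fundamental domain $D = \{0 \le a, b, c < 1\}$ by eight half-cubes $K_1, \dots, K_8$, shifting each by an element of $H$, and running a quasiconvexity-plus-vertex-evaluation argument on each (which involves inspecting roughly thirty vertex values of $f_{\ts}$), you normalize any lift of $q \in M$ by a sequential lattice shift $(m, n, k) \in \Z^3$ so as to land in the box $\{|x|, |y|, |z| \le \tfrac12\}$ in the $(x,y,z)$ coordinates. The $z$-coordinate can indeed be brought into $[-\tfrac12, \tfrac12]$ independently of $m,n$ because $k$ acts by a pure integer translation on $z$; this is the right way to see it. Since the defining quadratic of the interior ellipsoid $\d_{\ts}(E_1)$ is convex in $(x,y,z)$, its maximum over the box is at a vertex, and the entire eight-cube case analysis collapses to a single inequality $\tsd/2 + 4\pi^2 \le \tsf$, which your algebraic identity $\tsf = \tsd/4 + 16\pi^2$ disposes of at once. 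The paper's constant $\ts$ (with $\tsf = \tsd/4 + 16\pi^2$, equality at $r^2 = 1/4,\ z^2 = 1$) is tied to the worst shifted vertex in their case analysis, whereas your worst case is $r^2 = 1/2$, $z^2 = 1/4$; consequently your argument actually establishes the sharper bound $\tb \le \sqrt{\tfrac14(1+\sqrt{1+64\pi^2})} \approx 2.56$, and the claimed theorem follows a fortiori.

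One small imprecision in your closing remark: the phrase ``with $\ud$ in place of $\od$'' is misleading, since $\ud$ is the \emph{lower} bound for $d(\Id, g)$ and has no role in proving that a point lies \emph{inside} a ball; what you mean is that solving the ellipsoid inclusion tightly at your worst-case representative (i.e.\ solving $\od(g) = t$ there) gives the better constant $\approx 2.56$. This does not affect the validity of the main argument.
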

\begin{proof}
We  show that $B'_{\ts} = M$.
By  Corollary \ref{cor:E12}, we have $B_{\ts} \supset \d_{\ts}(E_1) =:E_{1\ts}$, where the ellipsoid $E_{1\ts}\subset G$ is defined by the inequality $\frac{r^2}{{\tsd}} + \frac{16 \pi^2 z^2}{{\tsf}} \leq 1$. Thus $B'_{\ts} \supset E'_{1{\ts}} := \pi(E_{1{\ts}})$. We show that $E'_{1{\ts}} = M$.

The homogeneous space $M$ can be represented by a fundamental domain $D = \{(a, b, c) \in G \mid 0 \leq a, b, c<1\}$, so that $\pi(D) = M$. We have $D \subset \cup_{i=1}^8 K_i$, where the cubes $K_i$ are defined as follows:
\begin{align*}
&K_1 \ : \ 0 \leq a, b, c \leq \frac 12,\qquad
&K_2 \ : \ 0 \leq a, c \leq \frac 12 \leq b \leq 1,\\
&K_3 \ : \ 0 \leq a \leq \frac 12\leq b, c \leq 1,\qquad
&K_4 \ : \ 0 \leq a, b \leq \frac 12\leq c \leq 1,\\
&K_5 \ : \ 0 \leq  b, c \leq \frac 12\leq a \leq 1,\qquad
&K_6 \ : \ 0 \leq   c \leq \frac 12\leq a,b \leq 1,\\
&K_7 \ : \ \frac 12 \leq a, b, c \leq 1,\qquad
&K_8 \ : \ 0 \leq   b \leq \frac 12\leq a,c \leq 1.
\end{align*}
We show that $E'_{1{\ts}} \supset \pi(K_i)$, $i = 1, \dots, 8$, which implies that $E'_{1{\ts}} = M$. To this end we define the following points $g_i \in H$ in the coordinates $(a, b, c)$: $g_1 = (0, 0, 0)$,  $g_2 = (0, 1, 0)$, $g_3 = (0, 1, 1)$, $g_4 = (0, 0, 1)$, $g_5 = (1, 0, 0)$, $g_6 = (1, 1, 0)$, $g_7 = (1, 1, 1)$, $g_8 = (1, 0, 1)$,  and prove that $E_{1\ts} \supset \tK_i := g_i^{-1}K_i$, $i = 1, \dots, 8$.

Let $L \subset \R^n$ be a convex compact set. We call a continuous function $\map{f}{L}{\R}$ {\it quasiconvex} if $\max_L f = \max_{\partial L} f$. Since a convex function on a convex compact set attains maximum at points of the boundary of this set or at all points of this set, then a convex function on such a set is quasiconvex.

Now let $\Pi \subset \R^3$ be a parallelepiped whose all faces and edges are parallel to coordinate planes and axes, and let $\dim \Pi \in \{1, 2, 3\}$, i.e., $\Pi$ is a 3D~parallelepiped, a 2D rectangle, or a 1D segment. Let us study quasiconvexity of the function $f_t(a, b, c) = t^2(a^2+b^2) + 4 \pi^2 (2c-ab)^2 - t^4$ whose zero level is the ellipsoid $\partial E_{1t}$, $t > 0$. We have 
$\pder{f_t}{a} = 2 t^2 a - 8 \pi^2b(2c-ab)$,  $\pder{f_t}{b} = 2 t^2 b - 8 \pi^2a(2c-ab)$,
$\pder{f_t}{c} = 16 \pi^2(2c-ab)$, thus $f_t$ has only one critical point $(a, b, c) = (0, 0, 0)$, which is the minimum point. Thus if $\dim \Pi = 3$ then $\restr{f_t}{\Pi}$ is quasiconvex.

If $\Pi \subset \{a = \const\}$ or $\Pi \subset \{b = \const\}$, then $\restr{f_t}{\Pi}$ is convex, thus it is quasiconvex. Thus if $\dim \Pi = 3$ and the restriction of $f_t$ to faces of $\Pi$ parallel to the plane $\{c = 0\}$ is quasiconvex, then $\restr{f_t}{\Pi}$ attains maximum at vertices of $\Pi$.

1) We prove that $E_{1\ts} \supset \tK_1 = K_1$. Since 
$\restr{\pder{f_t}{a}}{c = 0} = {2a(4\pi^2b^2+t^2)}$, which is nonnegative and vanishes only for $a = 0$, then 
the function \\
$\restr{f_{\ts}}{K_1 \cap \{c=0\}}$ increases in $a$, thus 
$\restr{f_t}{K_1 \cap \{c=0\}}$ is quasiconvex.

We have $p := \left(\restr{\pder{f_{\ts}}{a} + \pder{f_{\ts}}{b}\right)}{K_1 \cap \{c=1/2\}} = 2(a+b)(\tsd-4(1-ab)\pi^2)$. Since in $K_1$ we have $ab\leq \frac 14 < 1 - \frac{\tsd}{4\pi^2} \approx 0.68$, then $p $ is nonpositive and vanishes only at $(a, b) = (0, 0)$, then 
$\restr{f_{\ts}}{K_1 \cap \{c=1/2\}} $ is quasiconvex. 

Thus $\restr{f_{\ts}}{K_1}$ attains maximum at vertices of $K_1$. We have 
$f_{\ts}(0, 0, 0) \approx - 161$, $f_{\ts}(0, 0, 1/2) \approx - 122$, $f_{\ts}(0, 1/2, 0) = f_{\ts}( 1/2, 0, 0) \approx - 158$, \\
$f_{\ts}(0, 1/2, 1/2) = f_{\ts}( 1/2, 0, 1/2) \approx - 118$, $f_{\ts}(1/2, 1/2, 0) \approx - 152$, \\
$f_{\ts}(1/2, 1/2, 1/2) \approx - 133$,
whence $\restr{f_{\ts}}{K_1} < 0$, thus $E_{1\ts} \supset \tK_1 = K_1$. 

2) We prove that $E_{1\ts} \supset \tK_2 =  g_2^{-1} K_2$. 
Notice that for any elements $(\alpha, \beta, \gamma)$, $(a, b, c)$ of $G$
$$
(\alpha, \beta, \gamma)^{-1} \cdot (a, b, c) = (a - \alpha, b - \beta, c - \gamma + \alpha(\beta - b)).
$$
Thus 
$\tK_2  = \{a, c \in [0, 1/2], \ b \in [-1/2, 0]\}$. By the argument of item 1), the function $\restr{f_{\ts}}{\tK_2 \cap \{c=0\}}$ increases in $a$, thus $\restr{f_t}{\tK_2 \cap \{c=0\}}$ is quasiconvex.

We have $\restr{\pder{f_t}{a}}{\tK_2 \cap \{c=1/2\}} = 8b(ab-1)\pi^2 + 2 at^2$, which is nonnegative and vanishes only for $a = b = 0$, thus $\restr{f_t}{\tK_2 \cap \{c=1/2\}}$ is quasiconvex. 

So $\restr{f_{\ts}}{\tK_2}$ attains maximum at vertices of $\tK_2$. In item 1) we showed that $f_{\ts}<0$ at vertices of the square $[0, 1/2]_a \times \{b = 0\} \times [0, 1/2]_c$. Further, we have 
$f_{\ts}(0, -1/2, 0) \approx - 158$, $f_{\ts}(0, -1/2, 1/2) \approx - 118$, $f_{\ts}(1/2, -1/2, 0) \approx - 152$, $f_{\ts}(1/2, -1/2, 1/2) \approx - 93$, then $\restr{f_{\ts}}{\tK_2} < 0$, thus $E_{1\ts} \supset \tK_2$.

3) We prove that $E_{1\ts} \supset \tK_3 =  g_3^{-1} K_3 = \{a \in [0, 1/2], \ b, c \in [-1/2, 0]\}$. Consider the involution $\mapto{i}{(a, b, c)}{(a, -b, -c)}$. Then $i(\tK_1) = \tK_3$ and $f_t \circ i = f_t$, thus $\restr{f_{\ts}}{\tK_3}  < 0$ since 
$\restr{f_{\ts}\circ i }{\tK_1} = \restr{f_{\ts}}{\tK_1} < 0$. Thus $E_{1\ts} \supset \tK_3$.

4) We prove that $E_{1\ts} \supset \tK_4 =  g_4^{-1} K_4 = \{a, b \in [0, 1/2], \ c \in [-1/2, 0]\}$.
Consider the involution $\mapto{i}{(a, b, c)}{(a, -b, -c)}$. Then $i(\tK_2) = \tK_4$ and $f_t \circ i = f_t$, thus $\restr{f_{\ts}}{\tK_4}  < 0$ by virtue of $\restr{f_{\ts}}{\tK_2}  < 0$ similarly to item 3).

5) We prove that $E_{1\ts} \supset \tK_5 =  g_5^{-1} K_5$. We have $\tK_5 \subset \hK_5  = \{a \in [-1/2, 0], \ b \in [0, 1/2], \  c \in [-1/2, 1/2]\}$.
Notice that  $\restr{\pder{f_t}{a}}{c = -1/2} = 2a(t^2 + 4 \pi^2 b^2)+8\pi^2b = 0$ only if 
$a = \bar a := - \frac{4\pi^2b}{t^2+4\pi^2b^2}$, and 
$$\restr{\pder{f_{\ts}}{b}}{a = \bar a, \ b \in [0,1/2], \ c = -1/2} = 
\frac{2b\tsd(16\pi^4(b^4-1)+8b^2\pi^2\tsd + \tsf)}{(4 b^2\pi^2+\tsd)^2},$$ which is nonpositive (since the quartic polynomial in brackets in numerator is negative for $b\in [0,1/2]$) and vanishes only for $b = 0$. Thus $\restr{f_{\ts}}{\hK^5 \cap \{c = -1/2\}}$ has no interior critical points, so this function is quasiconvex.

We have   $\restr{\pder{f_t}{a}}{\hK^5 \cap \{c = 1/2\}} = 2a(t^2 + 4 \pi^2 b^2)-8\pi^2b = 0$ only if 
$a = \bar a :=  \frac{4\pi^2b}{t^2+4\pi^2b^2}$, and $\restr{\pder{f_{\ts}}{b}}{a = \bar a, \ b \in [0,1/2], \ c = 1/2} = 
\frac{2b\tsd(16\pi^4(b^4-1)+8b^2\pi^2\tsd + \tsf)}{(4 b^2\pi^2+\tsd)^2}$, which is nonpositive  and vanishes only for $b = 0$ by the previous paragraph. Thus $\restr{f_{\ts}}{\hK_5 \cap \{c = 1/2\}}$ has no interior critical points, so this function is quasiconvex.

So $\restr{f_{\ts}}{\hK_5}$ attains maximum at vertices of $\hK_5$. Since $f_{\ts}(-1/2, 0, -1/2)=f_{\ts}(-1/2, 0, 1/2)=f_{\ts}(1/2, 0, 1/2)=f_{\ts}(0,1/2,1/2)=f_{\ts}(0,1/2,-1/2)\approx -118$, 
$f_{\ts}(-1/2, 1/2, -1/2)=f_{\ts}(1/2, 1/2, 1/2)\approx -132$, $f_{\ts}(-1/2, 1/2, 1/2)=f_{\ts}(1/2, -1/2, 1/2)
\approx-93$,   $f_{\ts}(0, 0, -1/2)=f_{\ts}(0,0,1/2)\approx -121$, see items 1)--3) above, then $\restr{f_{\ts}}{\hK_5} \leq 0$, thus $\restr{f_{\ts}}{\tK_5} \leq 0$, and $E_{1\ts} \supset \tK_5$.

6) We prove that $E_{1\ts} \supset \tK_6 =  g_6^{-1} K_6$. We have $\tK_6 \subset \hK_6  = \{a, b \in [-1/2, 0],  \  c \in [0, 1]\}$.
Since $\restr{\pder{f_t}{a}}{\hK_6 \cap \{c = 0\}} = 2a(4b^2\pi^2+t^2)$, which is nonpositive and vanishes only for $a = 0$, then $\restr{f_t}{\hK_6 \cap \{c = 0\}}$ is quasiconvex. 

We have $\delta := 
\restr{\left(\pder{f_t}{a} + \pder{f_t}{b}\right)}{\hK_6 \cap \{c = 1\}} = 2(a+b)(t^2-4\pi^2 (2-ab))$. Since $\frac{\tsd}{4\pi^2} \approx 0.32 < \frac 74 \leq 2 - ab$, then $\delta$ is nonnegative and vanishes only for $a = b = 0$. Thus $\restr{f_t}{\hK_6 \cap \{c = 1\}}$ is quasiconvex. So $\restr{f_{\ts}}{\tK_6}$ attains maximum at vertices of $\tK_6$.

Since $f_{\ts}(-1/2, -1/2, 0)=f_{\ts}(1/2, 1/2, 0)\approx -152$, see item 1), \\ $f_{\ts}(-1/2, -1/2, 1)\approx -34$, \\
$f_{\ts}(-1/2, 0, 0) = f_{\ts}(0, -1/2, 0) \approx -158$,  see item 1), $f_{\ts}(-1/2, 0, 1) = f_{\ts}(0, -1/2, 1) =0$, $f_{\ts}(0, 0, 0)\approx -161$, see item 1), $f_{\ts}(0, 0, 1)\approx -3$,
then $\restr{f_{\ts}}{\hK^6} \leq 0$, thus $\restr{f_{\ts}}{\tK^6} \leq 0$, and $E_{1\ts} \supset \tK_6$.

7) We prove that $E_{1\ts} \supset \tK_7 =  g_7^{-1} K_7$. We have $\tK_7 \subset \hK_7  = \{a, b \in [-1/2, 0],  \  c \in [-1/2, 1/2]\}$.
Consider the involution $\map{i}{(a, b, c)}{(a, -b, -c)}$. Then $i(\hK_5) = \hK_7$ and $f_t \circ i = f_t$. Since  $\restr{f_{\ts}}{\tK^5} \leq 0$, then  $\restr{f_{\ts}}{\tK^7} \leq 0$,  and $E_{1\ts} \supset \tK_7$.

8) Finally,  we prove that $E_{1\ts} \supset \tK_8 =  g_8^{-1} K_8$. We have $\tK_8 \subset \hK_8  = \{a \in [-1/2, 0], \ b \in [0, 1/2],  \  c \in [-1, 0]\}$.
Consider the involution $\map{i}{(a, b, c)}{(a, -b, -c)}$. Then $i(\hK_6) = \hK_8$ and $f_t \circ i = f_t$. Since  $\restr{f_{\ts}}{\tK^6} \leq 0$, then  $\restr{f_{\ts}}{\tK^8} \leq 0$,  and $E_{1\ts} \supset \tK_8$.

Summing up, we proved that $E_{1\ts} \supset \cup_{i=1}^8 g_i^{-1}K_i$.
Thus $$E'_{1\ts} \supset \cup_{i=1}^8 \pi(g_i^{-1}(K_i)) \supset \pi(D) = M,$$ so the required inclusion $B'_{\ts} = M$ follows.
\end{proof}

We plot a union of \sR balls $B_{\ts}(h)$ for some $h \in H$ covering the fundamental domain $D$ in Fig. \ref{fig:tbar}.

\onefiglabelsizen{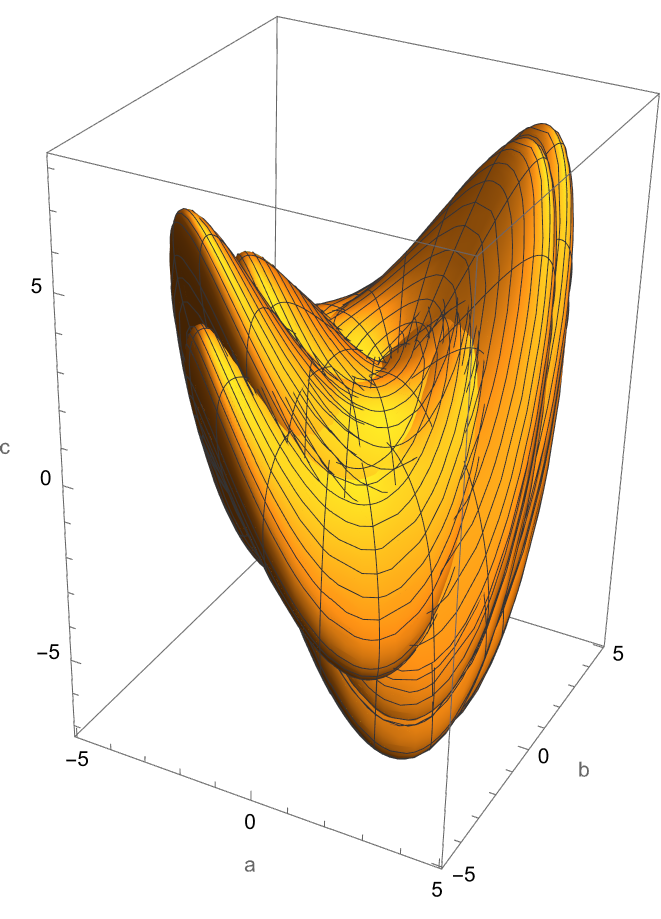}{Union of sub-Riemannian 
Heisenberg balls $B_{t}(h)$ covering the fundamental domain $D$}{fig:tbar}{7}

Now we provide a lower bound of the number $\tb$. Denote the points $\bg = (\ba, \bb, \bc), \tg = (\ta, \tib, \tc) \in G$ such that $\ba = \bb = \bc = \frac 12$ and $\ta = 1$, $\tib = \tc = 0$.

\begin{theorem}
We have $\tb \geq d(\tg, \bg)$.
\end{theorem}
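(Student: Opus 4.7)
The plan is to apply Lemma \ref{lem:tbard}(1) at the test point $q^* := \pi(\bg) \in M$, so that $\tb \geq d'(q_0, \pi(\bg))$, and then to identify this quotient distance with $d(\tg, \bg)$. By lifting a minimizing horizontal curve in $M$ from $q_0 = \pi(\Id)$ to $q^*$ to a horizontal curve in $G$ starting at $\Id$ (its endpoint runs over the fibre $\pi^{-1}(q^*) = \{h\bg : h \in H\}$) and using left-invariance of $d$, one obtains
\[
d'(q_0, \pi(\bg)) \;=\; \inf_{h \in H} d(\Id, h\bg) \;=\; \inf_{h \in H} d(h^{-1}, \bg) \;=\; \inf_{h \in H} d(h, \bg),
\]
the last equality using that $h \mapsto h^{-1}$ is a bijection of $H$. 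Since $\tg = (1,0,0)$ has integer $(a,b,c)$-coordinates, $\tg \in H$, and this infimum is $\leq d(\tg,\bg)$. The theorem thus reduces to the reverse inequality: $\tg$ is a closest lattice point of $H$ to $\bg$, i.e.\ $d(h,\bg) \geq d(\tg,\bg)$ for every $h \in H$.

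Next I would use the lower bound $\ud$ of Corollary \ref{cor:udod} to cut the problem down to a finite list of candidates. For $h = (m,n,k) \in \Z^3$, computing $h^{-1}\bg$ from the group-inverse formula and passing to $(x,y,z)$-coordinates via \eqref{abcxyz} gives horizontal radius $r^2 = (1/2-m)^2 + (1/2-n)^2$ and a vertical part that grows linearly with $|k|$ for fixed $(m,n)$. Since $\ud \geq r$, all $(m,n) \notin \{0,1\}^2$ are eliminated (there $r^2 \geq 5/2$, and $\sqrt{5/2} > \od(\tg^{-1}\bg) \geq d(\tg,\bg)$ by Corollary \ref{cor:udod}). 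For each $(m,n) \in \{0,1\}^2$, only the unique $k$ minimizing $|z|$ survives, the other choices yielding $|z| \geq 5/8$ and thus $\ud > d(\tg,\bg)$ as well. One is left with the four candidates $\Id,\; \tg,\; (0,1,1),\; (1,1,1)$.

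Among these four I would exploit the sR isometry $\sigma : (x,y,z) \mapsto (x,-y,-z)$ of $G$: a direct check shows $\sigma_* X_1 = X_1$ and $\sigma_* X_2 = -X_2$, so $\sigma$ preserves the horizontal distribution and its inner product, and fixes $\Id$. In $(x,y,z)$-coordinates $\sigma$ sends $(1,1,1)^{-1}\bg = (-1/2,-1/2,-1/8)$ to $\tg^{-1}\bg = (-1/2,1/2,1/8)$, and $(0,1,1)^{-1}\bg = (1/2,-1/2,-3/8)$ to $\bg = (1/2,1/2,3/8)$. Hence $d((1,1,1),\bg) = d(\tg,\bg)$ and $d((0,1,1),\bg) = d(\Id,\bg)$, so the problem reduces to $d(\Id,\bg) \geq d(\tg,\bg)$, comparing two points both with $r = 1/\sqrt 2$ but $|z| = 3/8$ versus $|z| = 1/8$.

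The hardest step is this final inequality: it is the strict monotonicity of the Heisenberg sR distance $d(\Id,(x,y,z))$ in $|z|$ at fixed $r > 0$. I would establish it by implicit differentiation of the transcendental unit-sphere relations coming from the geodesic parametrization \eqref{x2}--\eqref{z2}, showing that the sphere radius $R$ strictly increases with $|z|$ at fixed $r$. Alternatively one may bypass this monotonicity by solving those relations explicitly for the two endpoints ($\bg$ and $\tg^{-1}\bg$) and comparing the resulting values of $R$ directly.
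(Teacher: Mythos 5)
Your reduction is sound and in some respects cleaner than the paper's: the identification $d'(q_0,\pi(\bg)) = \inf_{h\in H} d(h,\bg)$ via Lemma~\ref{lem:tbard}(1), the elimination of all but four lattice candidates using the lower bound $\ud$ from Corollary~\ref{cor:udod}, and the use of the sub-Riemannian isometry $\sigma:(x,y,z)\mapsto(x,-y,-z)$ (which the paper itself invokes in its subcase~1.8) to pair $(1,1,1)\leftrightarrow\tg$ and $(0,1,1)\leftrightarrow\Id$ are all correct. You also correctly locate the crux: the single remaining comparison $d(\Id,\bg)\geq d(\tg,\bg)$, where $\bg=(1/2,1/2,3/8)$ and $\tg^{-1}\bg=(-1/2,1/2,1/8)$ in $(x,y,z)$-coordinates share $r=1/\sqrt2$ but have $|z|=3/8$ and $|z|=1/8$.

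However, that final step is left as a plan rather than a proof, and this is exactly where the ellipsoidal bounds of Corollary~\ref{cor:udod} are too coarse to help: one computes $\ud(\bg)=\sqrt{(\sqrt7+1/2)/2}\approx 1.25$ while $\od(\tg^{-1}\bg)=\sqrt{(\sqrt{1/4+\pi^2}+1/2)/2}\approx 1.36$, so $\ud(\bg)<\od(\tg^{-1}\bg)$ and the two-sided bounds alone do not decide the comparison. The paper closes this gap by going back to the exact transcendental distance formulas \eqref{dist1}--\eqref{dist2}: it proves that $\psi(p)=(2p-\sin2p)/(8\sin^2 p)$ is strictly increasing, locates $p_*=\psi^{-1}(1/4)<1.25$, and uses monotonicity of $p/\sin p$ to get $d(\tg,\bg)=\frac{p_*}{\sin p_*}\cdot\frac1{\sqrt2}<1<\ud(\bg)$. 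Your proposed monotonicity of $d(\Id,(x,y,z))$ in $|z|$ at fixed $r>0$ is in fact true, and it follows from exactly those two monotonicity facts through \eqref{dist1}--\eqref{dist2}, so your route is viable; but as written it is an asserted claim, not a proof, and you would need to supply that short argument (or the explicit evaluation) to make the step rigorous.
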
 
\begin{proof}
By left-invariance of the metric $d$, for any $g_1, g_2 \in G$ we have $d(g_1, g_2) = d(\Id, g_0)$, $g_0 = g_1^{-1}g_2$. The distance $d(\Id, g_0)$, $g_0 = (x, y, z)$, is computed explicitly \cite{umn1}: if $(x, y) \neq (0, 0)$ then
\begin{align}
&d(\Id, g_0) = \frac{p}{\sin p} \sqrt{x^2+y^2}, \label{dist1}\\
&\frac{2p-\sin 2p}{8 \sin^2 p} = \frac{|z|}{x^2+y^2}, \qquad p \in [0, \pi], \label{dist2}
\end{align}
and if $(x, y) = (0, 0)$ then $d(\Id, g_0) = 2 \sqrt{\pi|z|}$.

Now we show that $d(\tg, \bg) < 1$. We have $d(\tg, \bg) = d(\Id, g)$, where $g = \tg^{-1}\bg = (x, y, z) = (-1/2, 1/2, 1/8)$. The function $\psi(p) := \frac{2p-\sin 2p}{8 \sin^2 p}$ that appears in \eq{dist2} increases as $p \in (0, \pi)$ from $0$ to $+\infty$. Indeed, changing $p$ to $u=2p$, we get 
$$4(1-\cos u)^2\frac{d\psi}{du}=(1-\cos u)^2-(u-\sin u)\sin u=2(1-\cos u)-u\sin u$$
$$=4\sin p(\sin p-p\cos p)>0 \ \text{ for } \ p\in(0,\pi).$$ 
Since $\psi(1.25) \approx 0.26$ then $\psi(1.25) > \frac 14$. Let $\ps \in (0, \pi)$ be the root of the equation $\psi(\ps) = \frac 14 = \frac{|z|}{x^2+y^2}$. Then $\ps = \psi^{-1}(\frac 14) < 1.25$, thus by equalities \eq{dist1}, \eq{dist2} 
$$
d(\Id, \tg^{-1} \bg) = \frac{\ps}{\sin \ps} \sqrt{\left(\frac 12\right)^2 + \left(\frac 12\right)^2} < \frac{1.25}{\sin 1.25} \frac{1}{\sqrt{2}}\approx 0.93 < 1. 
$$
Here we use increasing of the function $\frac p{\sin p}$ in $p\in(0,\pi)$. Indeed, its derivative multiplied by 
$\sin^2p$ is equal to $\sin p-p\cos p>0$.  So $d(\tg, \bg) < 1$.

Take any number $\tit \in (0, d(\tg, \bg))$. We show that $B'_{\tit} \neq M$. To this end we show that $\pi(\bg) \notin B'_{\tit}$, i.e., that $\bg \notin H B_{\tit}$.

We take any $h = (a, b, c) \in H$ and show that $\bg \notin h B_{\tit}$.
The following cases are possible:

1) $0 \leq a, b, c \leq 1$,

2) $(a \leq -1) \quad \lor \quad (a \geq 2) \quad \lor \quad (b \leq -1) \quad \lor \quad (b \geq 2)$,

3) $(0 \leq a, b \leq 1) \quad \land \quad ((c \leq -1) \quad \lor \quad (c \geq 2))$. 

\medskip
1) Let $0 \leq a, b, c \leq 1$.

1.1) Let $h = (0, 0, 0)$ in coordinates $a, b, c$, we denote this as $h = (0, 0, 0)_{abc}$. 
Then $g_0 = h^{-1}\bg = \bg = (1/2, 1/2, 1/2)_{abc} = (1/2, 1/2, 3/8)_{xyz}$, i.e., the point $\bg$ has coordinates $(x, y, z) = (1/2, 1/2, 3/8)$. 
Thus by Corollary \ref{cor:udod}  
$d(\Id, g_0) \geq \ud(g_0) = \sqrt{\frac 12 \left(\frac 12 + \sqrt{7}\right)} \approx 1.25 > 1 > d(\tg, \bg) > \tit$.

1.2) Let $h = (0, 0, 1)_{abc}$.
Then $g_0 = h^{-1}\bg  = (1/2, 1/2, -1/2)_{abc} = (1/2, 1/2, -5/8)_{xyz}$, thus by Corollary \ref{cor:udod}\\
$d(\Id, g_0) \geq \ud(g_0) = \sqrt{\frac 12 \left(\frac 12 + \sqrt{19}\right)} \approx 1.56 > 1 > d(\tg, \bg) > \tit$.

1.3) Let $h = (0, 1, 0)_{abc}$.\\
Then $g_0 = h^{-1}\bg  = (1/2, -1/2, 1/2)_{abc} = (1/2, -1/2, 5/8)_{xyz}$, thus by Corollary \ref{cor:udod}\\
$d(\Id, g_0) \geq \ud(g_0) = \sqrt{\frac 12 \left(\frac 12 + \sqrt{19}\right)} \approx 1.56 > 1 > d(\tg, \bg) > \tit$.

1.4) Let $h = (0, 1, 1)_{abc}$.
Then $g_0 = h^{-1}\bg  = (1/2, -1/2, -1/2)_{abc} = (1/2, -1/2, -3/8)_{xyz}$, thus by Corollary \ref{cor:udod}\\
$d(\Id, g_0) \geq \ud(g_0) = \sqrt{\frac 12 \left(\frac 12 + \sqrt{7}\right)} \approx 1.25 > 1 > d(\tg, \bg) > \tit$.

1.5) Let $h = (1, 0, 0)_{abc}$. Then $h = \tg$, thus $d(h, \bg) = d(\tg, \bg) 
> \tit$.

1.6) Let $h = (1, 0, 1)_{abc}$.
Then $g_0 = h^{-1}\bg  = (-1/2, 1/2, -1)_{abc} = (-1/2, 1/2, -7/8)_{xyz}$, thus by Corollary \ref{cor:udod}\\
$d(\Id, g_0) \geq \ud(g_0) = \sqrt{\frac 12 \left(\frac 12 + \sqrt{37}\right)} \approx 1.81 > 1 > d(\tg, \bg) > \tit$.

1.7) Let $h = (1, 1, 0)_{abc}$.
Then $g_0 = h^{-1}\bg  = (-1/2, -1/2, 1)_{abc} = (-1/2, -1/2, 7/8)_{xyz}$, thus by Corollary \ref{cor:udod}\\
$d(\Id, g_0) \geq \ud(g_0) = \sqrt{\frac 12 \left(\frac 12 + \sqrt{37}\right)} \approx 1.81 > 1 > d(\tg, \bg) > \tit$.

1.8) Let $h = (1, 1, 1)_{abc}$.
Then $g_0 = h^{-1}\bg  = (-1/2, -1/2, 0)_{abc} = (-1/2, -1/2, -1/8)_{xyz}$.
Consider the involution $\map{i}{(x, y, z)}{(x, -y, -z)}$, then $d(\Id, i(g)) = d(\Id, g)$. Since $i(g_0) = \tg^{-1}\bg$, then $d(\Id, g_0) = d(\Id, \tg^{-1}\bg) > \tit$.

2) Let $(a \leq -1) \quad \lor \quad (a \geq 2) \quad \lor \quad (b \leq -1) \quad \lor \quad (b \geq 2)$.
Since $d(h, \bg) = d(\Id, h^{-1}\bg)$ and $h^{-1}\bg = (\frac 12 -a, \frac 12 - b, *)$, then by inequality \eq{d>=1}
$$
d(h, \bg) \geq \sqrt{\left(\frac 12 - a\right)^2 + \left(\frac12 -b\right)^2} \geq \sqrt{\left(\frac 32\right)^2 + \left(\frac 12\right)^2} = \frac{\sqrt{10}}{2} \approx 1.58.
$$
Thus $d(h, \bg) > d(\tg, \bg) > \tit$.

3) Let $(0 \leq a, b \leq 1) \quad \land \quad ((c \leq -1) \quad \lor \quad (c \geq 2))$.
 We have $d(h, \bg) = d(\Id, g_0)$,
$$
g_0 = h^{-1}\bg = (x_0, y_0, z_0) = \left( \frac 12 - a, \frac 12 - b, \frac{ab}{2} + \frac 38 + \frac{b-a}{4} - c\right).
$$
If $c \geq 2$, then $|z_0| \geq \frac 78$. And if $c \leq -1$, then $|z_0| \geq \frac 98$. In both cases inequality~\eq{d>=2} implies that $d(h, \bg) = d(\Id, h^{-1}\bg) \geq \restr{\sqrt[4]{12z_0^2}}{|z_0| = \frac 78} = \sqrt[4]{\frac{147}{16}} >1>\tit$.

Summing up, we proved that $\bg \notin H B_{\tit}$, and the statement of this theorem follows.
\end{proof}

\begin{remark}
Numerical computations on the basis of
equalities \eq{dist1}, \eq{dist2} imply that $d(\tig, \bg) \in (0.91, 0.92)$. 
\end{remark}

\begin{remark}
For comparison, consider the standard Euclidean metric on~$\R^3$ and its quotient on the torus $\T^3 = \R^3/\Z^3$ (see Sec. \ref{sec:ex}). Then formula \eq{tcut} yields
$$
\sup \{ \tcut(q(\cdot)) \mid q(\cdot) \text{ geodesic on } \T^3\} = \frac{\sqrt{3}}{2} \approx 0.87.
$$
This value is essentially less than our bound $\tb \leq \ts \approx 3.56$ since in the Heisenberg group the \sR distance grows slowly near the origin in the direction of the vector field $X_3$, see Fig. \ref{fig:tbar} and estimates \eq{dleq}.
\end{remark}

\section[Bounds of cut time via upper bounds of \sR balls]{Bounds of cut time via upper bounds \\of \sR balls}\label{sec:upper}

Recall that $B_t(g) \subset G$ is the closed \sR ball of radius $t \geq 0$ centered at a point $g \in G$, and $B_t := B_t(\Id)$.
Denote 
$$
\that = \sup \{ t > 0 \mid B_t(h_1) \cap B_t(h_2) = \emptyset \ \  \forall h_1 \neq h_2 \in H\}.
$$
Since $B_t(h_i) = h_iB_t$, then 
$$
\that = \sup \{ t > 0 \mid B_t \cap B_t(h) = \emptyset \ \ \forall h \neq \Id \in H\}.
$$
Recall that $g_0 = \Id \in G$, and for an element $g \in G$ we denote its projection to $M$ as $g':=\pi(g)$.

\begin{lemma}\label{lem:dd'1}
If $d(g_0, g_1) < \that$ for an element $g_1 \in G$, then $d'(g_0', g_1') = d(g_0, g_1)$. 
\end{lemma}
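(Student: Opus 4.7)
The inequality $d'(g_0',g_1')\leq d(g_0,g_1)$ is immediate: any horizontal curve in $G$ from $g_0$ to $g_1$ projects via $\pi$ to a horizontal curve in $M$ from $g_0'$ to $g_1'$ of the same sub-Riemannian length (because $X_i'=\pi_*X_i$, so $\pi$ is a local isometry of the sub-Riemannian structures). The entire content of the lemma is therefore the reverse inequality, and my plan is to prove it by contradiction using path-lifting along the covering $\pi:G\to M$.

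Suppose for contradiction that $d'(g_0',g_1')<d(g_0,g_1)$. Pick $\varepsilon>0$ so small that there exists a horizontal curve $\gamma'\colon[0,1]\to M$ with $\gamma'(0)=g_0'$, $\gamma'(1)=g_1'$, and sub-Riemannian length $L<d(g_0,g_1)<\hat t$. Since $\pi:G\to M$ is a covering map (the action of $H$ on $G$ by left multiplication is free and properly discontinuous), there is a unique lift $\gamma\colon[0,1]\to G$ with $\gamma(0)=g_0=\operatorname{Id}$, and because the distributions are $\pi$-related this lift is horizontal of the same length $L$. Its endpoint $\tilde g_1:=\gamma(1)$ satisfies $\pi(\tilde g_1)=g_1'$, so $\tilde g_1=hg_1$ for some $h\in H$.

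Now I exploit the definition of $\hat t$. From the lift, $d(g_0,hg_1)\leq L<\hat t$, and from left-invariance of $d$ on $G$, $d(h,hg_1)=d(g_0,g_1)<\hat t$. Choose any $t$ with $\max(L,d(g_0,g_1))<t<\hat t$. Then $hg_1\in B_t(g_0)\cap B_t(h)$, so these two balls meet. By the defining property of $\hat t$ this forces $h=\operatorname{Id}$, hence $\tilde g_1=g_1$. But then $\gamma$ is a horizontal curve in $G$ from $g_0$ to $g_1$ of length $L<d(g_0,g_1)$, contradicting the definition of $d(g_0,g_1)$. Therefore $d'(g_0',g_1')\geq d(g_0,g_1)$, and equality holds.

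The only step requiring any genuine care is the unique lifting of a horizontal curve $\gamma'\subset M$ to a horizontal curve in $G$ with prescribed initial point, together with the fact that this lift preserves sub-Riemannian length; both are standard consequences of $\pi$ being a covering that is also a local isometry of the sub-Riemannian structures, and I do not anticipate any real obstacle. Everything else is just the definition of $\hat t$ combined with left-invariance of the metric on $G$.
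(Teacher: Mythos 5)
Your proof is correct, and it rests on exactly the same core fact as the paper's: for $t<\that$ the balls $B_t$ and $B_t(h)$, $h\in H\setminus\{\Id\}$, are pairwise disjoint, which together with left-invariance of $d$ rules out every nontrivial deck transformation. The implementation differs a little. The paper invokes the quotient-distance formula $d'(g_0',g_1')=\min\{d(\bar g_0,\bar g_1)\mid\pi(\bar g_i)=g_i'\}$, realized at some lift pair $(\tig_0,\tig_1)=(h_0g_0,h_1g_1)$, and shows that $h_0\neq h_1$ would force $d(g_0,h_0^{-1}h_1g_1)>d(g_0,g_1)$, contradicting minimality. You instead take a horizontal curve in $M$ of length $L<d(g_0,g_1)$, lift it through the covering $\pi$ (legitimate and length-preserving because $\pi$ is both a covering map and a sub-Riemannian local isometry), and use the ball-disjointness for a suitable $t$ with $\max(L,d(g_0,g_1))<t<\that$ to pin the endpoint of the lift to $g_1$ itself, yielding a contradiction. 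Your route is slightly more self-contained, since it avoids invoking the fact that the minimum over lifts is attained (which the paper asserts as a ``notice''); the paper's route is slightly shorter once that formula is granted. In substance the two arguments are the same.
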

\begin{proof}
Let $d(g_0, g_1) < \that$. Notice that
\be{d'mind}
d'(g_0', g_1') = \min\{d(\bar{g}_0, \bar{g}_1) \mid \pi(\bar{g}_i) = g_i'\} = d(\tig_0, \tig_1) 
\ee
for some $\tig_i\in\pi^{-1}(g'_i)$. 
We have $\tig_i = h_ig_i$, $h_i \in H$. 

If $h_0 = h_1$, then $d'(g_0', g_1') = d(h_0g_0, h_0g_1) = d (g_0, g_1)$, and the claim follows.

Let $h_0\neq h_1$. Then $d(\tig_0, \tig_1) = d(h_0g_0, h_1g_1) = d(g_0, h_0^{-1}h_1g_1)$. Moreover, $h_0^{-1}h_1g_1 \in h_0^{-1}h_1B_t=B_t(h_0^{-1}h_1)$, $t = d(g_0, g_1) < \that$. Since $h_0^{-1}h_1 \neq \Id$, then $B_t \cap B_t(h_0^{-1}h_1) = \emptyset$, thus $h_0^{-1}h_1g_1 \notin B_t$. So $d(g_0, h_0^{-1}h_1g_1) > t$, i.e., $d(g_0, h_0^{-1}h_1g_1) > d(g_0, g_1)$, which contradicts to \eq{d'mind}.
Thus $h_0 = h_1$,   and the claim follows by the previous paragraph.
\end{proof}

\begin{lemma}\label{lem:tcutth1}
Let $g(\cdot)$ be a \sR geodesic in $G$ starting at $g_0=\Id$ such that $\tcut(g(\cdot))\geq \that$. Then $\tcut(g'(\cdot))\geq \that$ as well.
\end{lemma}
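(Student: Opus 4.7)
The plan is to combine Lemma \ref{lem:dd'1} with the fact that the projection $\pi : G \to M$ is a local isometry of sub-Riemannian structures (by construction in \eqref{SR'4}), so horizontal curves $g(\cdot)$ in $G$ and their projections $g'(\cdot) = \pi \circ g(\cdot)$ in $M$ have the same length. In particular, if $g(\cdot)$ is arc-length parametrized, so is $g'(\cdot)$.

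First I would fix an arbitrary $t \in (0, \that)$ and show that the arc $g'|_{[0,t]}$ is a length minimizer in $M$. Since $\that \leq \tcut(g(\cdot))$, the arc $g|_{[0,t]}$ is length minimizing in $G$, so $d(g_0, g(t)) = \operatorname{length}(g|_{[0,t]}) = t$. In particular $d(g_0, g(t)) = t < \that$, which is exactly the hypothesis needed to apply Lemma \ref{lem:dd'1}: it gives
$$d'(g_0', g'(t)) = d(g_0, g(t)) = t.$$
On the other hand, by the local isometry property of $\pi$,
$$\operatorname{length}(g'|_{[0,t]}) = \operatorname{length}(g|_{[0,t]}) = t.$$
Combining, $\operatorname{length}(g'|_{[0,t]}) = d'(g_0', g'(t))$, so $g'|_{[0,t]}$ is a length minimizer in $M$.

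Finally, by the definition of cut time as a supremum, the conclusion $\tcut(g'(\cdot)) \geq t$ for every $t \in (0, \that)$ yields $\tcut(g'(\cdot)) \geq \that$, as required.

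I do not expect any real obstacle here: the argument is essentially a repackaging of Lemma \ref{lem:dd'1}, together with the elementary observation that $\pi$ preserves lengths of horizontal curves. The only point deserving attention is to make sure we only apply Lemma \ref{lem:dd'1} in the regime $d(g_0, g(t)) < \that$ (strict inequality), which is guaranteed by taking $t$ strictly less than $\that$ and passing to the supremum at the end.
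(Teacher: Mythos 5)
Your proof is correct and follows essentially the same route as the paper: fix $t<\that$, use optimality of $g|_{[0,t]}$ to get $d(g_0,g(t))=t<\that$, apply Lemma \ref{lem:dd'1}, and let $t\to\that$. The only difference is that you spell out explicitly why $d'(g_0',g'(t))=t$ implies optimality of $g'|_{[0,t]}$ (via length-preservation of $\pi$), which the paper leaves implicit.
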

\begin{proof}
Let $\tcut(g(\cdot))\geq \that$. Take any $t \in (0, \that)$. The geodesic $\restr{g(\cdot)}{[0, \that]}$ is optimal, thus $d(g_0, g(t)) = t < \that$. By Lemma \ref{lem:dd'1} we have $d'(g_0', g'(t)) = d(g_0, g(t)) = t$, i.e., the geodesic $\restr{g'(\cdot)}{[0, t]}$ is optimal. Thus $\tcut(g'(\cdot))\geq t$. Taking $t$ arbitrarily close to $\that$, we get the required bound  $\tcut(g'(\cdot))\geq \that$.
\end{proof}

\begin{lemma}\label{lem:tcutth2}
Let $g(\cdot)$ be a \sR geodesic in $G$ starting at $g_0=\Id$ such that $\tcut(g(\cdot))< \that$. Then $\tcut(g'(\cdot)) = \tcut(g(\cdot))$.
\end{lemma}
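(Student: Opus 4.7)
The plan is to prove the two inequalities $\tcut(g'(\cdot)) \geq \tcut(g(\cdot))$ and $\tcut(g'(\cdot)) \leq \tcut(g(\cdot))$ separately, using Lemma~\ref{lem:dd'1} as the main tool. Set $t_0 := \tcut(g(\cdot))$, so by hypothesis $t_0 < \that$. The key underlying fact is that along any \sR geodesic parameterized by arclength, $d(g_0, g(t)) \leq t$ for every $t \geq 0$, so that the condition $t < \that$ automatically guarantees $d(g_0, g(t)) < \that$, which is exactly the hypothesis of Lemma~\ref{lem:dd'1}.

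For the inequality $\tcut(g'(\cdot)) \geq t_0$, I would take an arbitrary $t \in (0, t_0)$. Since $g(\cdot)$ is optimal on $[0,t]$ in $G$, one has $d(g_0, g(t)) = t < t_0 < \that$. Lemma~\ref{lem:dd'1} then gives $d'(g_0', g'(t)) = d(g_0, g(t)) = t$, showing that $g'(\cdot)$ is a length minimizer on $[0,t]$. Hence $\tcut(g'(\cdot)) \geq t$ for every such $t$, and passing to the supremum yields $\tcut(g'(\cdot)) \geq t_0$. This is essentially the argument of Lemma~\ref{lem:tcutth1}, simply capped by $t_0$ rather than $\that$.

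For the reverse inequality $\tcut(g'(\cdot)) \leq t_0$, I would pick an arbitrary $t \in (t_0, \that)$. Since $t > \tcut(g(\cdot))$, the curve $g(\cdot)$ is not a length minimizer on $[0,t]$ in $G$, so $d(g_0, g(t)) < t$. Since also $d(g_0, g(t)) \leq t < \that$, Lemma~\ref{lem:dd'1} applies at the endpoint $g_1 := g(t)$ and gives
\begin{equation*}
d'(g_0', g'(t)) = d(g_0, g(t)) < t.
\end{equation*}
Therefore $g'(\cdot)$ is not a length minimizer on $[0,t]$, which means $\tcut(g'(\cdot)) \leq t$. Letting $t \to t_0^+$ within $(t_0, \that)$ yields $\tcut(g'(\cdot)) \leq t_0$, and combining with the previous paragraph gives equality.

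There is no real obstacle here: both directions reduce to checking that the distance $d(g_0, g(t))$ stays below $\that$ so that Lemma~\ref{lem:dd'1} identifies the $G$-distance with the $M$-distance, after which optimality (respectively non-optimality) transfers between $G$ and $M$ immediately. The only mild point worth noting is that the argument uses the full interval $(t_0, \that)$, which is non-empty precisely because we assumed $t_0 < \that$; this assumption is essential, since otherwise after cut time in $G$ the projection could pick up a shortcut through $H$ and the $M$-cut time could a priori be strictly smaller than the $G$-cut time.
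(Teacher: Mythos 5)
Your proof is correct and follows essentially the same route as the paper's: both directions are established by applying Lemma~\ref{lem:dd'1} to transfer optimality (for $t < \tcut(g(\cdot))$) and non-optimality (for $t \in (\tcut(g(\cdot)), \that)$) between $G$ and $M$. The only cosmetic difference is that you spell out the limit $t \to \tcut(g(\cdot))^+$ and the preliminary bound $d(g_0, g(t)) \leq t$, which the paper leaves implicit.
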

\begin{proof}
Let $\tcut(g(\cdot))< \that$. Take any $t \in (0, \tcut(g(\cdot)))$. 
The geodesic $\restr{g(\cdot)}{[0, t]}$ is optimal, thus $d(g_0, g(t)) = t < \that$.
By Lemma \ref{lem:dd'1} we have $d'(g_0', g'(t)) = d(g_0, g(t)) = t$, i.e., the geodesic $\restr{g'(\cdot)}{[0, t]}$ is optimal, so $\tcut(g'(\cdot))\geq t$.
Taking~$t$ arbitrarily close to $\tcut(g(\cdot))$, we get  $\tcut(g'(\cdot))\geq \tcut(g(\cdot))$.

Take any $\tau \in (\tcut(g(\cdot)), \that)$.
The geodesic $\restr{g(\cdot)}{[0, \tau]}$ is not optimal, so $d(g_0, g(\tau)) < \tau$, thus $d(g_0, g(\tau)) < \that$.
By Lemma \ref{lem:dd'1} we have 
$d'(g_0', g'(\tau)) = d(g_0, g(\tau)) < \tau$, i.e., the geodesic $\restr{g'(\cdot)}{[0, \tau]}$ is not optimal. Thus $\tcut(g'(\cdot)) = \tcut(g(\cdot))$.
\end{proof}

\begin{lemma}\label{lem:tcutth0}
Let $g(\cdot)$ be a \sR geodesic in $G$ starting at $g_0=\Id$. Then $\tcut(g'(\cdot)) \leq \tcut(g(\cdot))$.
\end{lemma}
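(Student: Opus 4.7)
The plan is to reduce the lemma to the general fact that the projection $\pi:G\to M$ is distance-nonincreasing, and then compare lengths of geodesic arcs in $G$ and their projections in $M$.

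First I would establish that $\pi$ is distance-nonincreasing: for every $g_0, g_1 \in G$, one has $d'(g'_0, g'_1) \leq d(g_0, g_1)$. This uses the definition \eqref{SR'4} of the sub-Riemannian structure on $M$: if $g(\cdot)$ is a horizontal curve in $G$ with controls $(u_1, u_2)$, then $g'(\cdot) = \pi \circ g(\cdot)$ is horizontal in $M$ with the same controls, hence has the same sub-Riemannian length. Taking the infimum over all horizontal curves in $G$ connecting $g_0$ to $g_1$ (which project to a special subclass of horizontal curves in $M$ connecting $g'_0$ to $g'_1$) yields the desired inequality.

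Next I would translate this into the cut time bound. Set $T = \tcut(g(\cdot))$. For any $\tau > T$, the restriction $g(\cdot)|_{[0,\tau]}$ fails to be a length minimizer in $G$, so
\[
d(g_0, g(\tau)) < \tau = \text{length}(g(\cdot)|_{[0,\tau]}),
\]
since $g(\cdot)$ is arclength parametrized. Applying the distance-nonincreasing property gives
\[
d'(g'_0, g'(\tau)) \leq d(g_0, g(\tau)) < \tau = \text{length}(g'(\cdot)|_{[0,\tau]}),
\]
so $g'(\cdot)|_{[0,\tau]}$ is not a length minimizer in $M$.

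Finally I would conclude using the standard fact that sub-arcs of length minimizers are length minimizers: if $g'(\cdot)|_{[0,\tau']}$ were minimizing for some $\tau' \geq \tau$, then so would be $g'(\cdot)|_{[0,\tau]}$, contradicting the previous paragraph. Hence $\tcut(g'(\cdot)) \leq \tau$ for every $\tau > T$, so $\tcut(g'(\cdot)) \leq T = \tcut(g(\cdot))$. There is no real obstacle here; the whole proof is a two-line consequence of the fact that $\pi$ does not increase sub-Riemannian distances.
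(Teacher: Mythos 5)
Your proof is correct and takes essentially the same approach as the paper: both rest on the observation that a horizontal curve in $G$ projects to a horizontal curve in $M$ of the same length. The paper phrases it as a proof by contradiction (a shorter curve in $G$ from $g_0$ to $g(t)$ projects to a shorter curve in $M$), while you package the same fact up front as ``$\pi$ is distance-nonincreasing'' and then argue directly; this is only a cosmetic repackaging.
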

\begin{proof}
By contradiction, assume that $\tcut(g'(\cdot)) > \tcut(g(\cdot))$. Take any $t \in (\tcut(g(\cdot)), \tcut(g'(\cdot)))$. 
Then the geodesic $\restr{g'(\cdot)}{[0, t]}$ is optimal, thus its length is equal to $t$: $l\left(\restr{g'(\cdot)}{[0, t]}\right) = t$.
But $l\left(\restr{g(\cdot)}{[0, t]}\right) = l\left(\restr{g'(\cdot)}{[0, t]}\right)$, and the geodesic 
$\restr{g(\cdot)}{[0, t]}$ is not optimal, since  $t > \tcut(g(\cdot))$. Therefore, there exists another geodesic 
from $g(0)$ to $g(t)$ of length less than~$t$. Its projection to $M$ is a geodesic connecting $g'(0)$ and $g'(t)$ of 
the same length less than~$t$. Therefore, the geodesic $\restr{g'(\cdot)}{[0, t]}$ is not optimal, while 
$t<\tcut(g'(\cdot))$. The contradiction thus obtained proves the lemma. 
\end{proof}

Summing up, we have the following bounds of the cut time in $M$.

\begin{corollary}
Let $g(\cdot)$ be a \sR geodesic in $G$ starting at $g_0=\Id$. Then the following bounds hold:
\begin{itemize}
\item[$(1)$]
$\tcut(g'(\cdot)) \leq \tcut(g(\cdot))$.
\item[$(2)$]
If $\tcut(g(\cdot))\geq \that$, then $\that \leq \tcut(g'(\cdot))\leq \tcut(g(\cdot))$.
\item[$(3)$]
If $\tcut(g(\cdot))< \that$, then $\tcut(g'(\cdot))= \tcut(g(\cdot))$.
\end{itemize}
\end{corollary}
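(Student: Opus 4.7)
My plan is to assemble the corollary directly from the three lemmas just established, since each of its three parts is essentially a translation of one of them.

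First I would prove part (1) by simply invoking Lemma \ref{lem:tcutth0}, which asserts exactly this inequality with no hypothesis on $\tcut(g(\cdot))$ relative to $\that$. So (1) requires no additional argument.

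Next I would obtain part (2). Assume $\tcut(g(\cdot)) \geq \that$. The upper bound $\tcut(g'(\cdot)) \leq \tcut(g(\cdot))$ is part (1). The lower bound $\that \leq \tcut(g'(\cdot))$ is precisely the content of Lemma \ref{lem:tcutth1}. Combining the two inequalities yields the chain $\that \leq \tcut(g'(\cdot)) \leq \tcut(g(\cdot))$.

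Finally, part (3) follows at once from Lemma \ref{lem:tcutth2}, which states exactly that $\tcut(g'(\cdot)) = \tcut(g(\cdot))$ under the hypothesis $\tcut(g(\cdot)) < \that$. Since each of the three parts is an immediate consequence of results already proved, there is no genuine obstacle; the only thing to check is that the three lemmas cover the two cases $\tcut(g(\cdot)) \geq \that$ and $\tcut(g(\cdot)) < \that$ exhaustively, which they do.
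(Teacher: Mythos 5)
Your proof is correct and matches the paper's intent exactly: the paper introduces this corollary with ``Summing up, we have the following bounds of the cut time in $M$,'' meaning it is a direct consequence of Lemmas \ref{lem:tcutth0}, \ref{lem:tcutth1}, and \ref{lem:tcutth2}, precisely as you assemble it.
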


Now we compute the number $\that$.

\begin{theorem}\label{th:that}
We have $\that = \frac 12 $.
\end{theorem}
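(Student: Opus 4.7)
The plan is to reduce Theorem \ref{th:that} to the computation of the shortest non-trivial lattice distance
$$\delta := \inf_{h \in H \setminus \{\Id\}} d(\Id, h),$$
and then to show that $\delta = 1$, whence $\hat t = \delta/2 = 1/2$.

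First I would establish the identity $\hat t = \delta/2$. For any $t < \delta/2$ and any $h \in H \setminus \{\Id\}$, the triangle inequality shows that a hypothetical $p \in B_t \cap B_t(h)$ would satisfy $d(\Id, h) \leq d(\Id, p) + d(p, h) \leq 2t < \delta$, contradicting the definition of $\delta$; hence the closed balls are disjoint and $t$ belongs to the sup set. Conversely, once $\delta$ is attained at some $h_0 \in H$ (which it will be), for every $t \geq \delta/2$ the midpoint $p$ of a minimising geodesic from $\Id$ to $h_0$ lies in $B_t \cap B_t(h_0)$ (using closedness of the balls), so the balls meet and $t$ is excluded. Combining, $\hat t = \delta/2$.

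Next I would compute $\delta$ by splitting $H$ according to its $(a, b, c)$-parametrization $h = (m, n, k) \in \Z^3$, which via \eqref{abcxyz} translates to $(x, y, z) = (m, n, k - mn/2)$ in the Heisenberg coordinates. If $(m, n) = (0, 0)$ then $h = (0, 0, k)$ with $k \in \Z \setminus \{0\}$, and the vertical-axis distance formula recalled just after \eqref{dist2} yields $d(\Id, h) = 2\sqrt{\pi |k|} \geq 2\sqrt{\pi} > 1$. Otherwise $(m, n) \neq (0, 0)$, so the horizontal projection has $\sqrt{x^2 + y^2} = \sqrt{m^2 + n^2} \geq 1$, and the ellipsoidal lower bound \eqref{d>=1} gives $d(\Id, h) \geq \sqrt{x^2 + y^2} \geq 1$. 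Therefore $\delta \geq 1$.

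Finally, the reverse inequality $\delta \leq 1$ is witnessed by $h = (1, 0, 0)$ in the $(x, y, z)$-coordinates (i.e.\ $(m, n, k) = (1, 0, 0)$ in $(a, b, c)$): the horizontal segment $\{(s, 0, 0) : s \in [0, 1]\}$ is a geodesic-line of type \eqref{x1}--\eqref{z1} with $\theta = 0$, of length $1$, so $d(\Id, h) \leq 1$. Combined with the previous paragraph this gives $\delta = 1$ with the infimum attained, and hence $\hat t = 1/2$. The only subtlety to guard against is the boundary behaviour at $t = \delta/2$, where the use of closed balls and attainment of $\delta$ is essential for the sup to be excluded rather than included; once the ellipsoidal bounds of Section \ref{sec:ellips} are invoked, no further technical difficulty arises.
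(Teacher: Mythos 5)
Your proof is correct and slightly more systematic than the paper's, though it rests on the same geometric content. The paper argues directly in terms of the intersection sets $S_h^{1/2} = B_{1/2} \cap B_{1/2}(h)$: it states (reading off the ball geometry, cf.\ Fig.~\ref{fig:that}) that $S^{1/2}_{(\pm 1,0,0)}$ and $S^{1/2}_{(0,\pm 1,0)}$ are singletons at distance $\tfrac12$ from $\Id$ while all other $S_h^{1/2}$ are empty, then observes that shrinking $t$ below $\tfrac12$ empties them all. You instead extract the clean reduction $\that = \delta/2$ with $\delta = \min_{h\neq\Id} d(\Id,h)$, valid for any cocompact lattice in a complete geodesic metric space (the triangle inequality gives one direction; the midpoint of a minimizer realizing $\delta$ gives the other), and then verify $\delta = 1$ from the explicit vertical-axis formula $d(\Id,(0,0,k)) = 2\sqrt{\pi|k|}$ and the ellipsoidal bound \eqref{d>=1}, with the infimum attained at $(\pm1,0,0)$, $(0,\pm1,0)$ via the horizontal geodesic-lines. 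What this buys: you avoid having to describe the intersection sets explicitly (which requires knowing the precise shape of the \sR ball near its equator), and the attainability of $\delta$ and existence of minimizers are made explicit rather than left implicit. The one point you correctly flag — that at $t = \delta/2$ the balls \emph{touch}, so $t=\delta/2$ is excluded from the sup set and the sup is a limit from below — is exactly the boundary subtlety the paper handles by distinguishing $t = 1/2$ from $t < 1/2$.
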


\begin{proof}
In this proof we compute in coordinates $(a,b,c)$.
Denote the intersection $S_h^t = B_t \cap B_t(h)$, $h \in H$, $t > 0$.  Then $S_{(\pm 1, 0, 0)}^{1/2} = \{\left(\pm \frac 12, 0, 0\right)\}$, $S_{(0, \pm 1, 0)}^{1/2} = \{\left(0, \pm \frac 12, 0\right)\}$, and all the rest of the sets  $S_h^{1/2}$ are empty. Thus $\that \leq  \frac 12 $. 

Take any $t \in \left(0, \frac 12 \right)$, then for any $h \in H \setminus \{\Id\}$ we have $d(\Id, S_h^t) > \frac 12$, 
by the above statements and since the points $(\pm\frac12,0,0)$, $(0,\pm\frac12,0)$ are on distance 
$\frac12$ from the identity $\Id=(0,0,0)$. Thus $S_h^t = \emptyset$. So $\that = \frac 12 $.
\end{proof}

We plot the  balls $B_{1/2}(h) \subset G$ for $h \in \{(0, 0, 0), (\pm 1, 0, 0), (0, \pm 1, 0)\}$ in Fig.~\ref{fig:that}, and the ball $B'_{1/2} \subset M$ in Fig.~\ref{fig:thatM}.

\onefiglabelsizen{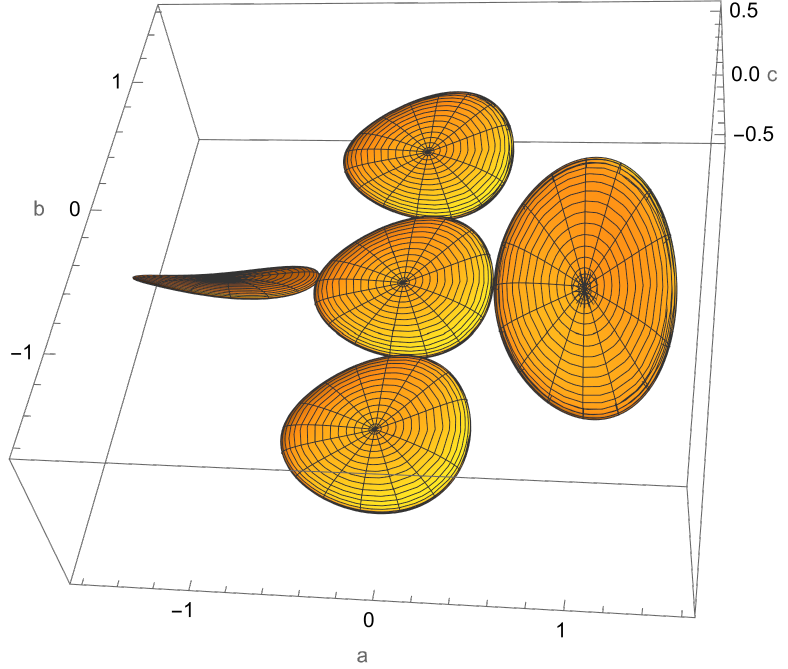}{Sub-Riemannian balls $B_{1/2}(h) \subset G$ touching one another}{fig:that}{7.5}
\onefiglabelsizen{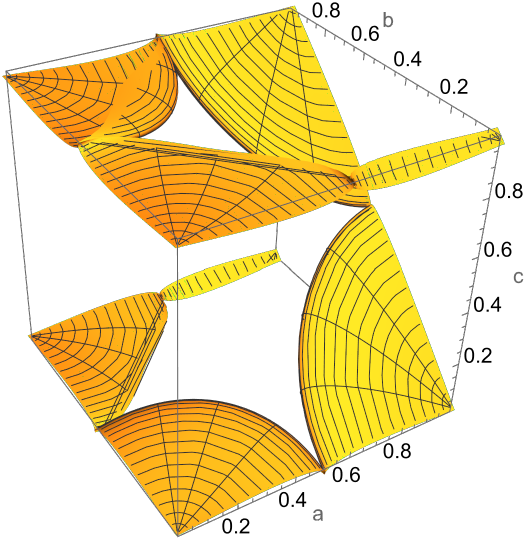}{Sub-Riemannian ball $B'_{1/2}\subset M$ touching itself}{fig:thatM}{7.5}

\begin{remark}
For the quotient of the Euclidean metric from $\R^n$ to $\T^n = \R^n/\Z^n$ (see Sec. \ref{sec:ex}) we have $\that = \frac 12 $ as well.
\end{remark}

\begin{theorem}\label{th:that1}
We have 
$$
\sup \{t> 0 \mid \d_t(E_2) \cap h \d_t(E_2) = \emptyset \ \forall h \neq \Id \in H\} =  \frac 12.
$$
\end{theorem}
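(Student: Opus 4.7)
My plan is to sandwich the supremum, call it $t_*$, by invoking Theorem~\ref{th:that} for the upper bound and by elementary projection arguments for the lower bound.

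For the upper bound, I will use the inclusion $B_t \subset \d_t(E_2)$ from the second part of Corollary~\ref{cor:E12}. This immediately implies that if $\d_t(E_2) \cap h\d_t(E_2) = \emptyset$ for every $h \in H \setminus \{\Id\}$, then $B_t \cap hB_t = \emptyset$ for every such $h$ as well. Hence the set whose supremum appears in the theorem is contained in the corresponding set in Theorem~\ref{th:that}, and so $t_* \leq \that = 1/2$.

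For the lower bound, I will show directly that for every $t < 1/2$ and every $h = (m,n,k) \in H \setminus \{\Id\}$ (in $(a,b,c)$ coordinates on $H$) the ellipsoids $\d_t(E_2)$ and $h\d_t(E_2)$ are disjoint. The strategy in each case is to find a single projection onto a coordinate axis or plane whose images are already disjoint, which forces disjointness of the ellipsoids. I split into two cases according to whether $(m,n) = (0,0)$. If $(m,n) \neq (0,0)$, then left multiplication by $h=(m,n,k)$ acts on the $(a,b)$-coordinates by the translation $(a,b) \mapsto (m+a, n+b)$ (as follows from the group law $(m,n,k)\cdot(a,b,c) = (m+a, n+b, k+c+mb)$), so the $(a,b)$-projection of $\d_t(E_2)$ is the closed disk of radius $t$ centered at the origin, while that of $h\d_t(E_2)$ is the disk of the same radius centered at $(m,n)$. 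Since $(m,n) \in \Z^2 \setminus \{(0,0)\}$, $\sqrt{m^2+n^2} \geq 1 > 2t$, and the two disks are disjoint. If instead $(m,n) = (0,0)$ and $k \neq 0$, then in the $(x,y,z)$ coordinates $h$ is the central element $(0,0,k)$, whose left action is the vertical translation $(x,y,z) \mapsto (x,y,z+k)$. The ellipsoid $\d_t(E_2) = \{X^2 + Y^2 + 12 Z^2/t^2 \leq t^2\}$ has $z$-extent $|Z| \leq t^2/(2\sqrt 3)$, and for $t < 1/2$ we have $t^2/\sqrt 3 < 1/(4\sqrt 3) < 1 \leq |k|$, so the $z$-projections of the two ellipsoids are disjoint intervals, and hence so are the ellipsoids themselves.

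Combining the two cases gives $t_* \geq 1/2$, and together with the upper bound this yields $t_* = 1/2$. The main (and essentially only) work is the case analysis for the lower bound; no part of the argument is delicate, since once Theorem~\ref{th:that} is in hand the upper bound is free, and the two subcases of the lower bound each reduce to a disjointness check for a translated disk in the plane or for an interval on the $z$-axis.
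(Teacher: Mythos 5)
Your proof is correct. Both the upper and lower bound arguments are sound: the upper bound follows immediately from the containment $B_t \subset \d_t(E_2)$ in Corollary~\ref{cor:E12} together with Theorem~\ref{th:that}, and the lower bound is established by a clean case split on $h=(m,n,k)$ using the correctly stated group law $(m,n,k)\cdot(a,b,c)=(m+a,n+b,k+c+mb)$: for $(m,n)\neq(0,0)$ the $(a,b)$-projections are disjoint disks of radius $t<\tfrac12$ centered at distance at least $1$ apart, and for $(m,n)=(0,0)$, $k\neq 0$ the $z$-projections are disjoint intervals since the $z$-extent of $\d_t(E_2)$ is $t^2/(2\sqrt{3})<1/2$.

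The paper's proof is a one-line reduction: it observes that $\d_t(E_2)$ is tangent to the sphere $\partial B_t$ along the equator $B_t\cap\{z=0\}$, and then refers to the same argument as in Theorem~\ref{th:that}, where the touching of balls $B_{1/2}$, $hB_{1/2}$ occurs exactly at equatorial points $(\pm\tfrac12,0,0)$, $(0,\pm\tfrac12,0)$. The tangency along the equator means the ellipsoids touch at precisely those same points at $t=\tfrac12$ and are disjoint for $t<\tfrac12$. Your route differs in that you do not invoke the tangency at all: the upper bound comes from the set-inclusion monotonicity argument (which is arguably cleaner than re-deriving the touching), and the lower bound is proven from scratch by coordinate projections rather than being inherited from the ball case. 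Both approaches are correct; yours is more self-contained and makes the geometry explicit, while the paper's is shorter by leveraging the tangency structure already set up in Corollary~\ref{cor:E12} and Theorem~\ref{th:that}.
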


\begin{proof}
Similarly to Theorem \ref{th:that} since the ellipsoid $\d_t(E_2)$ is tangent to the \sR sphere $B_t$ along the equator $B_t \cap \{ z = 0\}$.
\end{proof}

We plot the sets $h \d_{1/2}(E_2)$ for $h \in \{(0, 0, 0), (\pm 1, 0, 0), (0, \pm 1, 0)\}$ in Fig.~\ref{fig:that1}.

\onefiglabelsizen{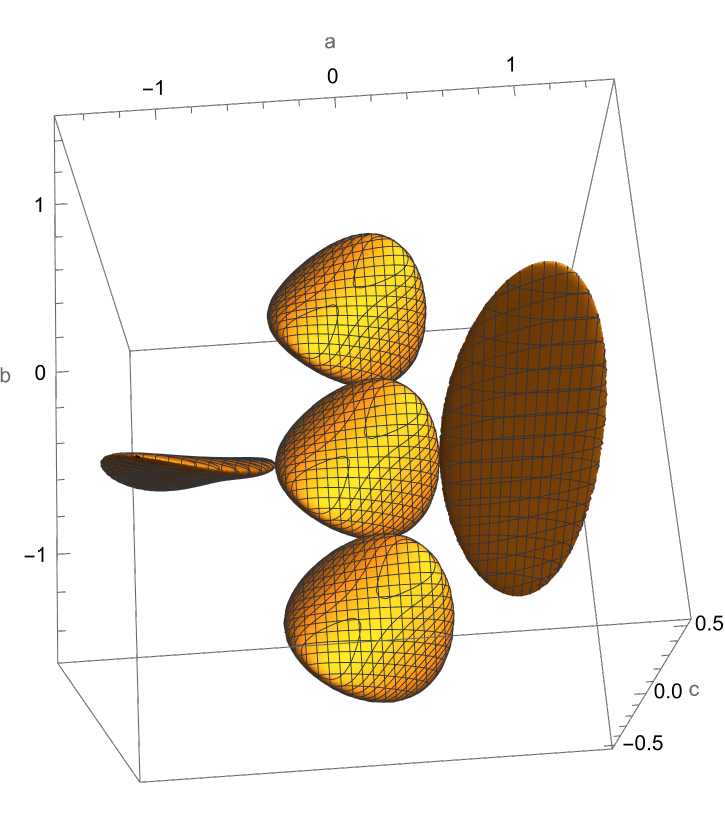}{Sets $h \d_{1/2}(E_2)$ touching one another}{fig:that1}{7.5}

\begin{remark}
Let $q(t)$ be a periodic \sR geodesic in $M$ of period~$T$. Then it is obvious that $\tcut(q(\cdot)) \leq \frac T2$ since $q(T/2)$ is a Maxwell point \cite{intro}, i.e., an intersection point of two symmetric geodesics. 

In a special case this bound turns into equality. Consider a geodesic~$q(t)$ of the form \eq{c'} with $\theta = \frac{\pi n}{2}$, $n \in \Z$. Then it is easy to see that $\tcut(q(\cdot)) = \frac T2 = \frac 12$.
\end{remark}

\section{Conclusion}\label{sec:conclude}

This work seems to be the first study of a projection of a left-invariant \sR structure on a Lie group to a compact homogeneous space. It reveals essential difference between the initial structure and its projection despite their local isometry. 

For example, dynamical behaviour of \sR geodesics on the Heisenberg group $G$ is trivial: all  geodesics tend to infinity. Dynamics of \sR geodesics on the Heisenberg 3D nil-manifold~$M$ includes closed geodesics, dense in $M$ geodesics, and geodesics dense in a 2D torus. 

Further, optimality of \sR geodesics in $G$ is very well understood; the corresponding cut time  arises due to continuous symmetries of the \sR structure on $G$. Description of optimality and cut time on $M$ is much delicate since there are no continuous symmetries; and discrete symmetries which seem to generate the cut locus are hidden since they do not respect the projection mapping $\map{\pi}{G}{M}$. Although, some two-sided bounds of the cut time in~$M$ are possible due to two-sided bounds of \sR balls and distance in $G$, which may be of independent interest.

\section*{Acknowledgement}
The authors thank V.M. Buchstaber, {I.A. Taimanov,} and A.V. Podobryaev for helpful discussions of certain aspects of this work.

\end{document}